\documentclass[12pt,a4paper,reqno]{amsart}
\usepackage{amsfonts}
\usepackage{epsfig}
\usepackage{graphicx}
\usepackage{amsmath}
\usepackage{amssymb}
\usepackage{txfonts}
\usepackage{mathrsfs}
\usepackage{color}
\usepackage{bbm}
\DeclareMathAlphabet{\mathpzc}{OT1}{pzc}{m}{it}
\DeclareSymbolFont{symbols}{OMS}{cmsy}{m}{n}
\usepackage[T1]{fontenc}
\usepackage{mathabx}

\setlength{\textwidth}{16cm} \setlength{\textheight}{21cm}

\setlength{\oddsidemargin}{0cm}
\setlength{\evensidemargin}{0cm}

\newcounter{main}

\newtheorem{theorem}{Theorem}[section]
\newtheorem{proposition}[theorem]{Proposition}
\newtheorem{lemma}[theorem]{Lemma}

\newtheorem{remark}{Remark}[section]
\newtheorem{definition}{Definition}[section]
\newtheorem{maintheorem}{Theorem}

\newcommand{\blanksquare}{\,\,\,$\sqcup\!\!\!\!\sqcap$}

\newcounter{example}
\newenvironment{example}%
{{\stepcounter{example}}{\flushleft {\bf Example \arabic{example}:}}}%
{\par}

\newcommand{\tic}{\mathcal{T}_{\!I\!C}}
\newcommand{\sic}{\mathcal{S}_{\!I\!C}}
\newcommand{\gic}{\mathcal{G}_{\!I\!C}}

 \DeclareMathOperator{\esssup}{ess\,sup}
  \DeclareMathOperator{\GL}{GL}
    \DeclareMathOperator{\SL}{SL}
      \DeclareMathOperator{\Sp}{Sp}
   \DeclareMathOperator{\SO}{SO}
\def \dim{{\rm dim}}

\title[Fine properties of $L^p$-cocycles]{Fine properties of $L^p$-cocycles which allow abundance of simple and trivial spectrum}

\author[M. Bessa]{M\'{a}rio Bessa}

\author[H. Vilarinho]{Helder Vilarinho}

\begin{document}

\begin{abstract}
In this paper we generalize ~\cite{AC0} and prove that the class of accessible and saddle-con\-ser\-va\-tive cocycles (a wide class which includes cocycles evolving in $\GL(d,\mathbb{R})$, $\SL(d,\mathbb{R})$ and $\Sp(d,\mathbb{R})$) $L^p$-densely have a simple spectrum. We also generalize ~\cite{AC0,AB} and prove that for an $L^p$-residual subset of accessible cocycles we have a one-point spectrum, by using a different approach of the one given in ~\cite{AC0}. Finally, we show that the linear differential system versions of previous results also hold and give some applications.
\end{abstract}

\maketitle

\bigskip

\noindent\emph{\textbf{MSC 2010:}} primary 34D08,  37H15; secondary 34A30, 37A20.\\
\emph{\textbf{keywords:}} Linear cocycles; Linear differential systems; multiplicative ergodic theorem; Lyapunov exponents.\\

\section{Motivations and overview}

The question on knowing the asymptotic growth of the norm of the powers of a given matrix is a well-known exercise of linear algebra. Its Lyapunov spectrum, in terms of limit exponential behavior, which is defined by the Lyapunov exponents (i.e. logarithms of the eigenvalues) and eigendirections, are completely determined by using standard linear algebraic computations. Besides, the stability demeanor, when allowing perturbations, is a 	
fairly understood subject (see e.g. ~\cite{Ka}).  However, another question which is substantially harder, intends to understand the spectral properties of a given product of a collection (finite or infinite) of matrices and its stability. It is easy to see that even if we have only two matrices the spectrum can change drastically by a small change on the initial elements. We can think for instance, in combining a $2\times 2$ diagonal matrix different from the identity and also the identity matrix. The problem is reduced to the one described above, yet a small perturbation on the identity causes a substantial change in the final result, depending if we choose to keep it as a diagonal matrix or else we decide to input some rotational behavior.

In very general terms, there are mostly two ways of contextualize products of matrices: within the \emph{random} framework or else within the \emph{deterministic} one. In this paper we follow the deterministic viewpoint on which the deterministic behavior is established once we fix a map $T$ in a closed manifold $X$, an ``automatic generator matrices'' defined by a map $A$ from $X$ into a Lie subgroup of $\GL(d,\mathbb{R})$ and a mode of relating $T$ with $A$ (see \S\ref{cocycles} for full details). These objects are part of the language of the so-called \emph{linear cocycles} (see \cite[\S2 and \S3]{BP}). The existence of the previous mentioned objects like eigendirections and Lyapunov exponents are guaranteed once we have a $T$-invariant measure on $X$ and an integrability condition on $A$ (cf.~\cite{O}).

Choosing the accuracy on which we measure the size of a perturbation of the initial system will be crucial to answer the question of knowing the changes produced in the Lyapunov spectrum.

The goal of finding non-zero Lyapunov exponents is an old quest dating back to early 1980's and the work of Cornelis and Wojtkowski~\cite{CW}. About twenty years ago Knill~\cite{Kn} proved that non-zero Lyapunov exponents are a $C^0$-dense phenomena within bounded $\SL(2,\mathbb{R})$ cocycles. A  much sharper update was developed by Bochi ~\cite{B} taking into account the pioneering ideas of Ma\~n\'e~\cite{M1,M2} on rotation solutions (see also ~\cite{No}). Bochi observed that, from the more accurate $C^0$-generic point of view, we have the coexistence of strata on the manifold displaying positive Lyapunov exponents and hyperbolic behavior with other strata where zero Lyapunov exponents appeared (see also ~\cite{BV2} for generalizations). Observe that Cong \cite{Con2} improved the previous result for \emph{bounded} cocycles obtaining that a generic {bounded} $\SL(2,\mathbb{R})$-cocycle is uniformly hyperbolic, i.e., has a fibered exponential separateness. As far as we know, the best result on the abundance of simple spectrum (i.e. all Lyapunov exponents are different), on a quite large scope of topologies and on the two dimensional case, is given by a recent result of Avila (see~\cite{Av}).

From the continuous-time viewpoint we have the linear differential systems or skew-product flows which are, in general, morphisms of vector bundles covering a flow. As a quintessential example, we consider a dynamics given by a smooth flow, and in this case the morphism corresponds to the action of the tangent flow in the tangent bundle. These systems are the flow counterpart of the discrete cocycles, i.e., the $d$-dimensional ($d\geq 2$) \emph{linear
differential systems} over continuous $\mu$-invariant flows in compact Hausdorff
spaces $X$, where $\mu$ is a Borel regular measure. Linear differential systems are equipped  with a dynamics in the base $X$ given by a continuous flow $\varphi^{t}:X\rightarrow{X}$,
 a dynamics in the $d$-dimensional tangent bundle, given by a linear cocycle
$\Phi^{t}\colon X\rightarrow{\GL(d,\mathbb{R})}$ with time $t$ evolving on $\mathbb{R}$, and a certain relation between them (see \S\ref{LDS} for more details). This continuous-time case is somehow different from its discrete counterpart. For the $L^p$-denseness results we recall the statement in \cite{AC0} \emph{``...the
results of this paper (with some appropriate changes) can be applied to the continuous-time
case as well''}. In \S\ref{cc} we expose in detail those \emph{``appropriate changes''} pointed by Arnold and Cong. Moreover, we give the continuous-time version for our strategy in order to obtain the $L^p$-residuality of the one-point spectrum. We stress that any perturbation must be performed upon a given differential equation.

With respect to the continuous-time versions, in~\cite{Be,Be2}, it was proved the Ma\~{n}\'{e}-Bochi-Viana theorem for linear differential systems. We notice out that several particular examples of genericity of hyperbolicity (exponential dichotomy) in $C^{0}$-topology on the torus were already explored by Fabbri~\cite{F} and by Fabbri and Johnson~\cite{FJ1}. Some approaches have been proposed for determining the positivity of Lyapunov exponents for linear differential systems (see~\cite{F2,FJ2}). This last result follows from the paper of Kotani~\cite{Ko}. We suggest~\cite{FJZ} for a quite complete survey about these issues.

It is pretty clear that, for both discrete and continuous-time case, there exists lots of subtleties on this subject: the choice on the topology, the choice of $A$ being bounded or continuous, the choice of whether we take the dense or the generic viewpoint. The strengthening of this thesis can be pushed forward by recalling that, by one hand, Arnold-Cong~\cite{AC0} and Arbieto-Bochi~\cite{AB} proved that, for feeble topologies like the $L^p$-topology (see ~\S\ref{topologies}), generic cocycles have zero Lyapunov exponents. On the other hand, Viana~\cite{V} proved that for stronger topologies the positive Lyapunov exponents are prevalent (see also \cite{Av}). In between we have the Bochi-Ma\~n\'e dichotomy.

If we scrutinize carefully the Arnold and Cong strategy carried out in \cite{AC0} to obtain simple spectrum we observe that, besides the idiosyncrasy of the $L^p$-topology which allows large uniform-norm perturbations by making small $L^p$-perturbations, they used strongly two properties of the group of matrices, one from a \emph{topological} and the other from a \emph{geometric} nature:

\medskip

\begin{enumerate}
\item {\bf Topological Condition:} first they needed to commingle any different directions in the fiber space which they called \emph{``the turning solution method of Millionshchikov''} (see \cite{Mi}), and;
\item {\bf Geometric Condition:} second, they input a small expansion in the predefined direction on which the Lyapunov exponent should grow combined with a balanced contraction to give a volume invariance.
\end{enumerate}

\medskip

In the present paper we considered two abstract properties of subgroups of matrices which reflect (1) and (2) above and following the insight from the $L^p$-topology. In brief terms, the property (1) is called \emph{accessibility} and was already considered in ~\cite{BV2} (see also a related definition in  ~\cite{N}) and the property (2) is called \emph{saddle-conservativeness}. Once we formulate the results taking into account these two properties we derive easily that the theorems in the vein of those in \cite{AC0,AB} hold for the most important families of matrices, like, e.g., $\GL(d,\mathbb{R})$, $\SL(d,\mathbb{R})$ and $\Sp(2d,\mathbb{R})$. This was the strongest motivation for having opted for this abstract approach.

Another aspect that may raise some doubts to the reader and we intend to clarify at once was our choice not to follow the strategy of Arnold and Cong \cite{AC0} when we try to achieve the $L^p$-denseness of the one-point spectrum. In fact, we opt to develop the argument first used in ~\cite{BV2} allowing us to waive the ergodic hypothesis and deal with dynamical cocycles, and also allowing the approach to the infinite dimensional case. As an application, in \S\ref{dynamical} we apply our results to the dynamical cocycle given by the derivative of an area-preserving  diffeomorphism and endowed with the $L^p$-norm. In \S\ref{infinite}, we point out that for discrete $L^p$ cocycles evolving on compact operators of infinite dimension (cf. ~\cite{BeC}) the one-point spectrum is prevalent.

In the following table we consider an abbreviated summary of the prevalence of the different spectrums with respect to both discrete and continuous-time systems and also considering different type of topologies. \bigskip

\begin{center}
\small{\begin{tabular}{|c|c|c|c|}
	\hline
& {\tiny   \textbf{$L^p$-topology}} & {\tiny \textbf{$C^0$-topology} }  &   {\tiny \textbf{$C^{r+\alpha}$-topology} ($r\geq 0$, $\alpha>0$)}  \\
	\hline
 {\tiny \textbf{maps}}  & {\tiny  o.p.s. (\cite{AC0,AB}; Theorems~\ref{ops},  \ref{simple}, \ref{dc} and \ref{BeCLp}})   & {\tiny o.p.s. vs hyperbolicity (\cite{B, BV2}}) & {\tiny hyperbolicity (\cite{V,Av,BVar}})\\
	\hline
{\tiny \textbf{flows}} & {\tiny  o.p.s.  (Theorems~\ref{ops2} and \ref{simple2}})  & {\tiny o.p.s. vs hyperbolicity (\cite{FJ2,Be,Be2}}) & {\tiny hyperbolicity (\cite{BVar}})\\
  \hline
\end{tabular}}
\end{center}

\bigskip

This paper is organized as follows: in \S\ref{discrete}
we concern to discrete-time cocycles, where we establish the existence of an $L^p$-residual subset of the accessible cocycles
with one-point spectrum (Theorem  \ref{ops}) and the $L^p$-denseness of saddle-conservative accessible cocycles having simple
spectrum (Theorem~\ref{simple}). In \S\ref{cc} we treat with the continuous-time results. We state the existence of an
$L^p$-residual subset of the accessible linear differential systems with one-point spectrum (Theorem~\ref{ops2}) and the
$L^p$-denseness of saddle-conservative accessible linear differential systems having simple spectrum (Theorem~ \ref{simple2}).
Finally, in \S\ref{app} we apply our results to the dynamical cocycles given by the derivative of  area-preserving
diffeomorphims, and to discrete cocycles evolving on compact operators of infinite dimension.

\section{The discrete-time case}\label{discrete}

\subsection{Definitions and statement of the results}\label{discrete results} \subsubsection{Cocycles and Lyapunov exponents}\label{cocycles}

Let $X$ be a compact Hausdorff space, $\mu$ a Borel regular
non-atomic probability measure and $T:X\to X$ be an automorphism preserving $\mu$. Consider the set $\mathcal G$ of the ($\mu$ mod 0
equivalence classes of) measurable maps
$A:X\to \GL(d,\mathbb R)$, $d \geq2$, endowed with its Borel
$\sigma$-algebra. The Euclidean space $\mathbb{R}^d$ is endowed with the canonic inner product. Each map $A$ generates a linear cocycle
$$
\begin{array}{cccc}
F_A: & X\times\mathbb R^d & \longrightarrow & X\times\mathbb R^d \\
& (x,v) & \longmapsto & (T(x),A(x) v),
\end{array}
$$
over the dynamical system $T:X\to X$.
We set
$$A^n(x):= A(T^{n-1}(x))\cdots A(x)$$ for the composition of the maps $A(T^{n-1}(x))$ 	
up to $A(x)$ and, if
$T$ is invertible, $$A^{-n}:= A^{-1}(T^{-n}(x))\cdots
A^{-1}(T^{-1}(x)).$$ As usual, we consider $A^0:=\textrm{Id}$ where $\textrm{Id}$ stands for the $d\times d$ identity matrix.
By an abuse of language we will often identify $F_A$ and $A$.  Let $\|\cdot\|$ be an operator norm on the set $d\times d$ matrices with real entries. Consider the subset $\mathcal{G}_{\!I\!C}$ of $\mathcal G$ of all maps $A\in\mathcal G$ satisfying
the following \emph{integrability condition}:
\begin{equation*}
\int_X\log^+\|A^{\pm1}(x)\|\,d\mu<\infty,
\end{equation*}
 where
$\log^{+}(y)=\text{max}\,\{0,\log(y)\}$.
The multiplicative ergodic theorem of Oseledets \cite{O} ensures that the Lyapunov exponents
$\lambda_1(A,x)\geq\ldots\geq\lambda_d(A,x)$ of the \emph{integrable} cocycle $A\in\mathcal{G}_{\!I\!C}$ are defined for almost every point $x$.
If $T$ is ergodic, these functions are constant almost everywhere, as the possible values for the limits
$$\lim_{n\to\pm\infty}\frac1n\log \|A^n(x)v\|,$$
for $\mu$ almost every ($\mu$-a.e.) $x\in X$ and all $v\in\mathbb
R^d\setminus\{0\}$. We say that $A\in\mathcal{G}_{\!I\!C}$ has \emph{one-point (Lyapunov) spectrum} if all Lyapunov
exponents are equal. If, in addition, we include that the cocycle $A$ takes values in
$\SL(d,\mathbb R)$ then $A$ has one-point spectrum if and only if all
Lyapunov exponents are zero. On the other hand,  we say that $A\in\mathcal{G}_{\!I\!C}$ has \emph{simple (Lyapunov) spectrum} if all Lyapunov
exponents are different.

\subsubsection{A topology on cocycles}\label{topologies}
Let us endow $\mathcal G$ with an $L^p$-like topology as in \cite{AC0}. For $A, B\in\mathcal G$ and $1\leq p \leq \infty$ set
\begin{equation*}
\|A\|_p:=\left\{\begin{array}{lll}
                 \Big(\displaystyle\int_X \|A(x)\|^p\, d\mu\Big)^{1/p}, & &\textrm{if}\, 1\leq p < \infty \\
                 \displaystyle\esssup_{x\in X}\|A(x)\|, & & \textrm{if}\, p=\infty  \\
               \end{array}
\right.,
\end{equation*}
  and $$\Delta_p(A,B):=\|A-B\|_p+\|A^{-1}-B^{-1}\|_p.$$ We define now $$ d_p(A,B):=\frac{\Delta_p(A,B)}{1+\Delta_p(A,B)},$$
where $d_p(A,B)=1$ if $\Delta_p(A,B)=\infty$. According \cite{AC0}, $(\mathcal G,d_p)$, and hence $(\mathcal{G}_{\!I\!C},d_p)$ is a complete metric space.
\begin{remark}
It follows from the definition of the metric and from H\"older inequality (see e.g. \cite{Ru}) that, for all $A, B\in\mathcal G$ and $1\leq p\leq q\leq\infty$, we have $d_p(A,B)\leq d_q(A,B)$.
\end{remark}

\subsubsection{Families of cocycles}
We are interested in classes of maps $A$ taking values in specific subgroups of $\GL(d,\mathbb R)$.
In the greater generality we consider subgroups that satisfy an accessability type condition.

\begin{definition}
 We call $\mathcal{S}\subseteq \GL(d,\mathbb R)$ \textbf{accessible} if it is a non-empty closed subgroup of $\GL(d,\mathbb R)$ which acts transitively in the projective space $\mathbb RP^{d-1}$, that is, given $u,v\in \mathbb RP^{d-1}$, there is $R\in\mathcal{S}$ such that $R\,u=v$.
\end{definition}

\begin{example}
The subgroups $\GL(d,\mathbb R)$, $\SL(d,\mathbb R)$, $\Sp(2q,\mathbb R)$, as well $\GL(d,\mathbb C)$ and $\SL(d,\mathbb C)$, are accessible.
\end{example}

\begin{remark}
In \cite[Definition 1.2]{BV2} the authors introduced a slightly different notion of accessability. See \cite[Lemma 5.12]{BV2} for a relation between those concepts.
\end{remark}

Next result shows that accessibility allows us to reach anywhere within the projective space and acting on elements of the group.

\begin{lemma}\label{rot}
Let $\mathcal{S}$ be an accessible subgroup of $\GL(d,\mathbb R)$. There exists $K>0$ such that, for all $u,v\in\mathbb RP^{d-1}$,
there is $R_{u,v}\in \mathcal{S}$, with $\|R_{u,v}^{\pm1}\|\leq K$, such
that $R_{u,v} u=v$.
\end{lemma}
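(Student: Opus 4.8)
The plan is to reduce the assertion to a \emph{one-variable} version and then promote the pointwise transitivity of the action into a uniform one by a Baire-category argument. Fix a base point $o\in\mathbb{R}P^{d-1}$. I claim there is $K'>0$ such that for every $v\in\mathbb{R}P^{d-1}$ there is $S_v\in\mathcal{S}$ with $S_v\,o=v$ and $\|S_v^{\pm1}\|\le K'$. Granting this, for arbitrary $u,v$ one sets $R_{u,v}:=S_v\,S_u^{-1}\in\mathcal{S}$, so that $R_{u,v}\,u=S_v\,S_u^{-1}\,S_u\,o=S_v\,o=v$ and $\|R_{u,v}^{\pm1}\|\le (K')^2=:K$, which is the Lemma.

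To prove the claim, for $M\in\mathbb{N}$ put $B_M:=\{S\in\mathcal{S}:\|S\|\le M,\ \|S^{-1}\|\le M\}$ and $\mathcal{O}_M:=\{S\,o:S\in B_M\}\subseteq\mathbb{R}P^{d-1}$. First I would check that $B_M$ is compact: it is bounded and closed in the space of $d\times d$ real matrices, the closedness using the bound on the inverse (if $S_k\to S$ with $S_k\in B_M$, a subsequence of $(S_k^{-1})$ converges to some $B$ with $SB=I$, hence $S\in\GL(d,\mathbb{R})$, $\|S^{-1}\|=\|B\|\le M$, and $S\in\mathcal{S}$ since $\mathcal{S}$ is closed in $\GL(d,\mathbb{R})$). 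Hence $\mathcal{O}_M$ is compact, hence closed, being the image of $B_M$ under the continuous orbit map $S\mapsto S\,o$. Transitivity of the $\mathcal{S}$-action gives $\bigcup_{M\in\mathbb{N}}\mathcal{O}_M=\mathbb{R}P^{d-1}$.

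Now the key step. Since $\mathbb{R}P^{d-1}$ is a compact metric space it is a Baire space, so in the countable cover $\bigcup_M\mathcal{O}_M$ by closed sets at least one, say $\mathcal{O}_{n_0}$, has nonempty interior; pick a nonempty open $W\subseteq\mathcal{O}_{n_0}$ and a point $w\in W$. For each $u\in\mathbb{R}P^{d-1}$ choose, by transitivity, $T_u\in\mathcal{S}$ with $T_u\,w=u$; then $T_u\,W$ is an open neighborhood of $u$ (as $T_u$ acts as a homeomorphism of $\mathbb{R}P^{d-1}$), and by compactness finitely many $T_{u_1}W,\dots,T_{u_k}W$ cover $\mathbb{R}P^{d-1}$. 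Set $K':=n_0\cdot\max_{1\le i\le k}\max\{\|T_{u_i}\|,\|T_{u_i}^{-1}\|\}$. Given $v$, pick $i$ with $v\in T_{u_i}W$; then $T_{u_i}^{-1}v\in W\subseteq\mathcal{O}_{n_0}$, so $T_{u_i}^{-1}v=S\,o$ for some $S\in B_{n_0}$, whence $S_v:=T_{u_i}S\in\mathcal{S}$ satisfies $S_v\,o=v$ and $\|S_v^{\pm1}\|\le K'$, proving the claim.

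The routine points are the compactness of $B_M$ (where one must invoke the bound on $S^{-1}$ to stay inside $\GL(d,\mathbb{R})$) and the bookkeeping $K=(K')^2$. The real content, and the step I expect to require the most care to present cleanly, is the Baire-category argument: accessibility furnishes, for each ordered pair of directions, some element of $\mathcal{S}$ realizing it but with no control on the norm, and converting this into a uniform bound is exactly a ``second category'' phenomenon; once one open set of directions is covered by bounded-norm elements, homogeneity of the action (translating that set around by elements of $\mathcal{S}$) together with compactness of $\mathbb{R}P^{d-1}$ finishes the proof.
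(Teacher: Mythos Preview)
Your proof is correct and takes a genuinely different route from the paper's. The paper argues via the \emph{openness} of the evaluation map $\mathcal{S}\to\mathbb{R}P^{d-1}$, $A\mapsto A(w)$ (a fact that rests on Cartan's closed-subgroup theorem and the submersion property of transitive smooth actions): it picks a bounded neighborhood $U$ of the identity in $\mathcal{S}$, covers $\mathbb{R}P^{d-1}$ by finitely many open sets $U(w_1),\dots,U(w_m)$, and then for given $u,v$ builds $R_{u,v}$ as a product of at most $m+2$ elements of $U$, yielding the bound $K=(m+2)\epsilon$. You instead avoid Lie theory entirely: you show directly that the closed sets $\mathcal{O}_M$ exhaust $\mathbb{R}P^{d-1}$ and invoke Baire category to get one with nonempty interior, then translate by finitely many group elements and use compactness. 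Your argument is more elementary and self-contained, since it does not appeal to the openness of the orbit map; the paper's argument, on the other hand, is slightly more constructive in the sense that it gives an explicit word-length bound on $R_{u,v}$ in terms of the covering number $m$. Both the reduction to a single base point via $R_{u,v}=S_vS_u^{-1}$ and the compactness verification for $B_M$ are handled correctly in your write-up.
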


\begin{proof}
Fix some $\epsilon>0$ and let $0<\delta<\epsilon$ be such that if $R_1,R_2\in U_\delta:=\{R\in \mathcal{S}\colon \|R\|<\delta\}$, then $R_2R_1^{-1}\in U_{\epsilon}$. The hypothesis over $\mathcal S$ imply
that for any $w\in\mathbb RP^{d-1}$, the evaluation map $w\colon \mathcal{S}\to\mathbb RP^{d-1}$ given by $A\mapsto A(w)$ is open, so that $U_\delta(w):=\{Rw: R\in U_\delta\}$ is an open subset of
$\mathbb RP^{d-1}$. Due to the compactness of the projective space one can write $$\mathbb RP^{d-1}=U_\delta(w_1)\cup\cdots\cup U_\delta(w_{m}),$$
for some $m\geq1$. Let  $u,v\in\mathbb RP^{d-1}$ be given with $u\in U_\delta(w_{i})$ and $v\in U_\delta(w_{j})$ for some $i,j\in\{1,\ldots,m\}$. Let $R_u, R_v\in U_\delta$ be such that $R_u w_i=u, R_v w_j=v$. There exist $1\leq k\leq m$
and $\{R_{i}\}_{i\leq k}$, with $\|R_i\|<\epsilon$, such that $R_k\cdots R_1 w_i=w_j$. Then $R_{u,v} u:=R_vR_{k}\cdots R_{1}R_u^{-1} u=v$ and $\|R_{u,v}\|\leq (m+2)\epsilon$. We just have to consider $K=(m+2)\epsilon$.
\end{proof}

The next definition stresses the possibility of implementing some expansion in a given direction and simultaneously compensate with a contraction so that, ultimately, it preserves the volume. We note that the expansion will be used in the sequel when we want to enlarge a certain Lyapunov exponent under a small perturbation.

\begin{definition}\label{SC}
Let $\mathcal{S}$ be a closed subgroup of $\GL(d,\mathbb R)$. We call $\mathcal{S}$ \textbf{saddle-conservative} if given any direction $e\in \mathbb{R}^d$ and $\delta>0$ there exists $A_\delta\in \mathcal{S}$ such that:
\begin{enumerate}
\item $A_\delta \in \SL(d,\mathbb R)$ and
\item $A_\delta e =(1+\delta)e$.
\end{enumerate}

\end{definition}

\begin{example}
The groups $\GL(d,\mathbb R)$, $\SL(d,\mathbb R)$, $\Sp(2q,\mathbb R)$, as well as $\GL(d,\mathbb C)$ and $\SL(d,\mathbb C)$, display the saddle-conservative property. The special orthogonal group $\SO(d,\mathbb{R})$ is not saddle-conservative because, despite the fact that displays condition (1) it fails condition (2).
\end{example}

\subsubsection{Statement of the results}
Denote by $\tic$ an accessible subgroup of $\mathcal{G}_{\!I\!C}$ and by $\mathcal{S}_{\!I\!C}$ a saddle-conservative closed subgroup of $\mathcal{T}_{\!I\!C}$. We present now our first result:

\begin{maintheorem}\label{ops}
There exists an $L^p$-residual subset $\mathcal R\in \mathcal{T}_{\!I\!C}$,  $1\leq p < \infty$, such that any $B\in \mathcal{R}$ has one-point spectrum.
 \end{maintheorem}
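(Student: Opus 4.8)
\textbf{Proof proposal for Theorem~\ref{ops}.}

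The plan is to realize the one-point spectrum as a countable intersection of open dense sets in $(\tic,d_p)$, following the Baire-category scheme but substituting the Arnold--Cong ``turning solution'' argument by the accessibility-based construction already used in \cite{BV2}. For $n\in\mathbb N$ let $\mathcal U_n$ be the set of cocycles $B\in\tic$ for which $\int_X(\lambda_1(B,x)-\lambda_d(B,x))\,d\mu<1/n$. Since the largest Lyapunov exponent $\lambda_1(B,\cdot)$ is upper semicontinuous and the smallest $\lambda_d(B,\cdot)$ is lower semicontinuous as functions of $B$ for the $L^p$-topology (a consequence of Kingman's subadditive theorem together with the fact that $\log^+\|B^{\pm1}\|$ controls the relevant integrals, and that $d_p$-convergence forces $L^p$-convergence of $\|B-B'\|$ and of the inverses), the map $B\mapsto\int_X(\lambda_1-\lambda_d)\,d\mu$ is upper semicontinuous, so each $\mathcal U_n$ is open. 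The one-point-spectrum set is then $\bigcap_n\mathcal U_n$ (the integral of the gap vanishes if and only if $\lambda_1=\lambda_d$ a.e., i.e.\ all exponents coincide a.e.), and since $(\tic,d_p)$ is a complete metric space it suffices to prove each $\mathcal U_n$ is dense.

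For density, fix $B\in\tic$ and $\varepsilon>0$; I want $C\in\tic$ with $d_p(B,C)<\varepsilon$ and the gap integral small. Here the $L^p$-topology does the heavy lifting: modifying $B$ on a set $Y\subset X$ of small $\mu$-measure changes $\|B-C\|_p$ and $\|B^{-1}-C^{-1}\|_p$ by an amount controlled by $\mu(Y)^{1/p}$ times the (possibly large) uniform size of the modification, hence can be made $L^p$-small even when the change is drastic in the uniform norm. The strategy is: choose a Rokhlin tower (or its measurable analogue for the possibly non-ergodic automorphism $T$) of small total measure and large height $N$; along the base of the tower use Lemma~\ref{rot} to insert, with uniformly bounded norm $K$, rotations $R_{u,v}\in\tic$ that ``mix'' the fiber directions so that after $N$ steps every initial direction is steered into alignment with a fixed reference direction. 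Because the inserted matrices have norm $\le K$, their logarithms are bounded, so their contribution to any Lyapunov exponent is $O((\log K)\cdot\mu(Y))$, which is negligible; on the other hand the forced re-alignment kills the Oseledets splitting and collapses all exponents to a common value (the averaged top exponent of the unperturbed blocks, say), yielding $C\in\mathcal U_n$. The accessibility hypothesis is exactly what guarantees these bounded mixing rotations stay inside $\tic$.

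The main obstacle I anticipate is making the ``collapse of the Oseledets splitting'' rigorous and quantitative without assuming ergodicity of $T$. One has to control, uniformly over the base of the tower, how much the gap $\lambda_1-\lambda_d$ can fail to be reduced because of the finitely many uncontrolled steps at the top of each tower column and because $T$ may not be ergodic (so exponents are only measurable functions, not constants); the fix is to partition $X$ according to the level sets of the exponents into finitely many pieces on which things are nearly constant, apply the tower construction on each piece, and invoke a Fubini-type estimate to glue the local perturbations into a global $C$ with $d_p(B,C)<\varepsilon$. A secondary technical point is verifying the semicontinuity claims in the $L^p$-topology precisely as stated (the subtlety being that $d_p$-small perturbations may be uniform-norm-large on a small set, so one needs the integrability condition defining $\gic$ to absorb those tails); this is where the hypothesis $1\le p<\infty$ is used, the case $p=\infty$ being excluded because there the topology is too strong for the tower trick. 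Once these two points are settled, completeness of $(\tic,d_p)$ and the Baire category theorem finish the proof.
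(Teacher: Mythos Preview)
Your overall Baire-category scheme and the semicontinuity input are correct and match the paper: the map $B\mapsto\int_X(\lambda_1-\lambda_d)\,d\mu$ (more generally each $\Lambda_k$) is upper semicontinuous for $d_p$, so the residual set can be identified with the continuity points. The difficulty is entirely in the density step, and your sketch of it contains a genuine gap. You propose to insert bounded rotations $R_{u,v}$ at the base of a tall Rokhlin tower so that ``after $N$ steps every initial direction is steered into alignment with a fixed reference direction'' and thereby ``collapses all exponents to a common value''. An invertible linear map cannot send every direction to a single one, so the stated mechanism is impossible as written; and even interpreted charitably (say, as cyclically permuting Oseledets directions \`a la Arnold--Cong), a single bounded perturbation on a set of small measure does not in one stroke equalize all exponents---at best it redistributes them, and making the \emph{integrated} gap smaller than $1/n$ in one shot is exactly the hard part that your sketch does not supply. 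The ``negligible contribution $O((\log K)\mu(Y))$'' estimate controls only the effect of the inserted matrices themselves, not the global rearrangement of growth rates, which is where the real work lies.

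The paper sidesteps this by \emph{not} proving density of your $\mathcal U_n$ directly. Instead it works with all the partial sums $\Lambda_k$, $k=1,\dots,d-1$, simultaneously: using accessibility it swaps \emph{one} direction of the top Oseledets space with \emph{one} direction of the bottom space at the midpoint of a long orbit segment (Lemma~\ref{rot3}), which yields the quantitative drop $\Lambda_k(B)<\delta+\tfrac12\big(\Lambda_{k-1}(A)+\Lambda_{k+1}(A)\big)$, i.e.\ a reduction of $\Lambda_k$ by the jump $J_k(A)=\tfrac12\int(\lambda_k-\lambda_{k+1})\,d\mu$ (Propositions~\ref{P1}--\ref{P3}). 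This is weaker than landing inside $\mathcal U_n$, but it is exactly enough: at a continuity point $A$ of every $\Lambda_k$ one gets $\Lambda_k(A)\le\Lambda_k(A)-J_k(A)+\delta$ for all $\delta>0$, hence $J_k(A)=0$ for every $k$, and the spectrum is one-point. So the missing idea in your proposal is to replace the (unavailable) global collapse by an index-by-index jump estimate and then let the continuity-point argument, rather than an explicit $G_\delta$ description, produce the residual set.
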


Once we obtain an $L^p$-residual where one-point spectrum prevails we ask if it is possible to find $L^p$-open subsets where all Lyapunov exponents are equal. Clearly, this question is interesting if we exclude certain contexts where the problem becomes easy to solve. That is the case when we deal with cocycles evolving on isometry subgroups (like $\SO(d,\mathbb{R})$) or with cocycles evolving on compact subgroups, where we surely have one-point spectrum.

In order to reach simple spectrum similarly to \cite[Theorem 4.4]{AC0} we need to deal with subgroups displaying additional features. We this in mind we obtain:
\begin{maintheorem}\label{simple}
Let $T:X\to X$ be ergodic. For any $A\in \mathcal{S}_{\!I\!C}$,  $1\leq p < \infty$, and $\epsilon>0$ there exists $B\in \mathcal{S}_{\!I\!C}$, with $d_p(A,B)<\epsilon$ and $B$ has simple Lyapunov spectrum.
 \end{maintheorem}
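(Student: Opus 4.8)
The plan is to use the multiplicative ergodic theorem to reduce the statement to a single gap-creating perturbation, and then to iterate that perturbation finitely many times with rapidly decreasing sizes, following the scheme of \cite[Theorem~4.4]{AC0} but with Lemma~\ref{rot} and Definition~\ref{SC} taking over the roles played in \cite{AC0} by the rotation group and by an explicit balanced shear. Since $T$ is ergodic and $A\in\sic\subseteq\gic$, the exponents $\lambda_1(A)\geq\dots\geq\lambda_d(A)$ are $\mu$-a.e.\ constant and there is a measurable Oseledets flag $\mathbb R^d=V^1(x)\supsetneq V^2(x)\supsetneq\dots\supsetneq V^d(x)\supsetneq\{0\}$ with $\lim_{n\to+\infty}\frac1n\log\|A^n(x)v\|=\lambda_i(A)$ exactly when $v\in V^i(x)\setminus V^{i+1}(x)$. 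If the exponents are already pairwise distinct, take $B=A$. Otherwise it suffices to prove the following. \emph{Claim: for every $C\in\sic$, every index $i$ which is the top of a non-trivial coincidence block (i.e.\ $\lambda_{i-1}(C)>\lambda_i(C)=\lambda_{i+1}(C)$, with $\lambda_0:=+\infty$), and every $\eta>0$, there is $C'\in\sic$ with $d_p(C,C')<\eta$, with $\lambda_i(C')>\lambda_{i+1}(C')$, and with every exponent of $C'$ within $\kappa(\eta)$ of the corresponding one of $C$, where $\kappa(\eta)\to0$ as $\eta\to0$.} Granting the Claim, apply it to $A$ at most $d-1$ times, at each step to the top index of a maximal coincidence block, choosing the size $\eta_j$ at step $j$ small enough that $\sum_j\eta_j<\epsilon$ and $\kappa(\eta_j)$ is less than half of every spectral gap opened at the previous steps; then earlier gaps survive, the number of distinct exponents strictly increases at each step, and the final $B\in\sic$ has simple spectrum with $d_p(A,B)<\epsilon$.

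To prove the Claim we run Millionshchikov's turning method on a small set carrying return dynamics. Fix $\delta>0$ large (depending only on the constant $K$ of Lemma~\ref{rot}) and a measurable unit reference field $e(x)\in V^i(x)\setminus V^{i+1}(x)$ for a.e.\ $x$, chosen generic within that stratum. Pick $M$ large so that $G_M:=\{x:\|C^{\pm1}(x)\|\le M\}$ has measure close to $1$, and, by Poincar\'e recurrence and ergodicity, choose a small positive-measure $Y\subseteq T(G_M)$ with first-return map $R$ and time $\tau$, so that $\int_Y\tau\,d\mu=1$ by Kac's lemma and $S:=T^{-1}(Y)\subseteq G_M$ has measure $\mu(Y)$. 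Off $S$ set $C'=C$; for $x\in S$ set $C'(x):=A_\delta\,R_x\,C(x)$, where $R_x\in\mathcal S$ is a uniformly bounded turning matrix from Lemma~\ref{rot} realigning the $C$-image of $e(\cdot)$ over the return block ending at $Tx\in Y$ onto $e(Tx)$, and $A_\delta\in\mathcal S\cap\SL(d,\mathbb R)$ is the saddle-conservative element of Definition~\ref{SC} with $A_\delta\,e(Tx)=(1+\delta)e(Tx)$ (here $\mathcal S$ denotes the subgroup underlying $\sic$). Along successive returns to $Y$ the $e(\cdot)$-direction is thus forced to grow at its $C$-rate $\lambda_i(C)$ augmented by a definite amount, coming from the repeated dilations, that dominates the bounded error introduced by the turnings once $\delta$ is large; so $\lambda_i(C')>\lambda_i(C)$, while the $\SL(d,\mathbb R)$-normalization of $A_\delta$ contracts the complementary directions, keeping $\lambda_{i+1}(C')<\lambda_i(C')$ and moving every exponent by at most $\kappa(\mu(Y))\to0$. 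This opens the gap at position $i$ and gives the Claim.

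It remains to check $C'\in\sic$ and $d_p(C,C')<\eta$. For a.e.\ $x$, $C'(x)$ is a product of elements of $\mathcal S$, hence $C'(x)\in\mathcal S$; and since the inserted factors and their inverses have norm at most $K(1+\delta)$, $\int_X\log^+\|(C')^{\pm1}\|\,d\mu\leq\int_X\log^+\|C^{\pm1}\|\,d\mu+\log\bigl(K(1+\delta)\bigr)<\infty$, so $C'\in\gic$, thus $C'\in\sic$. Moreover $C'=C$ off $S$, while on $S\subseteq G_M$ one has $\|C^{\pm1}\|\leq M$, so $\|C-C'\|_p^p\leq\bigl(1+K(1+\delta)\bigr)^pM^p\,\mu(Y)$ and likewise for $\|C^{-1}-(C')^{-1}\|_p$; hence $\Delta_p(C,C')\to0$ as $\mu(Y)\to0$, so $d_p(C,C')<\eta$ for $\mu(Y)$ small enough. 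Localizing the modification inside $G_M$ is what makes the estimate work even when $\|C\|_p=\infty$.

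The crux of the argument — and the only point requiring genuine work — is the sharp control of the Lyapunov spectrum of $C'$ asserted above: that the turning perturbation truly opens the gap $\lambda_i(C')>\lambda_{i+1}(C')$ and disturbs the remaining exponents by no more than $\kappa(\mu(Y))$. The naive submultiplicative bound $\lambda_1(C')\le\int_X\log\|C\|\,d\mu+\mu(Y)\log\bigl(K(1+\delta)\bigr)$ is far too crude, and one must instead exploit the structure the construction creates — the field $e(\cdot)$ becoming, for $C'$, an attracting direction spanning a dominated sub-bundle, with the growth on the complementary bundle pinned down by the $\SL(d,\mathbb R)$-normalization of $A_\delta$ — to evaluate $\lambda_i(C')$ and bound the other exponents; the choice of $e(\cdot)$ and of the realigning matrices $R_x$, and the passage through a sufficiently tall Rokhlin tower rather than the bare first-return map, must all be arranged with this in mind. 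This is exactly the technical content of \cite[Theorem~4.4]{AC0}, and the whole point of isolating accessibility (Lemma~\ref{rot}) and saddle-conservativeness (Definition~\ref{SC}) is that this analysis then carries over, with no essential change, to the abstract setting — and in particular to $\GL(d,\mathbb R)$, $\SL(d,\mathbb R)$ and $\Sp(2q,\mathbb R)$.
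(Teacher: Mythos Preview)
Your proposal and the paper's proof both follow the Arnold--Cong scheme, but they diverge at the crucial structural step, and this is where your argument has a genuine gap.

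The paper does \emph{not} work on the full cocycle. Instead it first takes the Oseledets block decomposition $A(x)=\bigoplus_{j=1}^r A_j(x)$ on $\mathbb R^d=\bigoplus_j E_j(x)$, and then applies the one-point splitting lemma (Lemma~\ref{split spec}) to a single block $A_i$ acting on the sub-bundle $E_i$, where by hypothesis the spectrum is trivial. Inside such a block the argument is clean: the turning $R_{q(x)}$ and the dilation $A_\delta$ are taken to act within $E_i$ and as the identity on the remaining $E_j$, so the other exponents literally do not move; the determinant argument then forces a genuine split of the repeated exponent $\lambda_i$. Lemma~\ref{lit change} is used afterwards only to nudge the newly created exponents off any accidental collision with some $\lambda_j$, $j\neq i$.

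Your construction instead applies $A_\delta\in\mathcal S\cap\SL(d,\mathbb R)$ to the \emph{full} fibre $\mathbb R^d$. Definition~\ref{SC} gives you $A_\delta e=(1+\delta)e$ and $\det A_\delta=1$, but nothing whatsoever about how $A_\delta$ acts on the hyperplane complementary to $e$: there is no reason it should respect the Oseledets splitting, and in particular no reason it should act mildly on $E_1,\dots,E_{i-1}$. Likewise your turning matrices $R_x$ from Lemma~\ref{rot} are only bounded in norm; they need not be block-diagonal. So the assertion that ``every exponent of $C'$ [is] within $\kappa(\eta)$ of the corresponding one of $C$'' is precisely the hard point, and it is not what \cite[Theorem~4.4]{AC0} provides --- that theorem is proved, as here, by working block by block, not by a global perturbation with a posteriori spectral control. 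Upper semicontinuity of $\Lambda_k$ gives you only one-sided bounds, and your determinant constraint pins down only the sum of all exponents, not the individual ones outside the block $E_i$.

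In short: the step you flag as ``the only point requiring genuine work'' is exactly the step the paper sidesteps by localising the perturbation to a single Oseledets block. Without that localisation your $\kappa(\eta)$-control is unproved, and the iteration that preserves previously opened gaps cannot be justified.
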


 Theorems \ref{ops} and \ref{simple} are respectively  proved in \S\ref{proof ops} and \S\ref{proof simple}.

\subsection{One-point spectrum is $L^p$-residual}\label{proof ops}
The core argument in order to obtain a residual subset is by proving that a certain function related with the Lyapunov exponents of the cocycles is upper semicontinuous and that the continuity points are those with one-point spectrum. Once we have this proved we use the fact that the set of points of continuity of an upper semicontinuous function is always a residual subset (see e.g. \cite{K}). It was this idea that led Arbieto and Bochi \cite{AB} to improve the $L^p$-denseness result of Arnold and Cong \cite{AC0} for one-point spectrum cocycles to an $L^p$-residual grade.
In ~\cite[Theorem 4.5]{AC0} the one-point spectrum $L^p$-prevalence among cocycles evolving on $\GL(d,\mathbb R)$ is proved. Notwithstanding we believe that the proof of Arnold and Cong can be readapted to accessible cocycles, here we obtain the proof of this result following a different approach by reformulating the arguments developed in ~\cite{BV2}, where it was presented a strategy for equalizing the Lyapunov exponents with small perturbations in the delicate $C^0$ topology, and for a quite general classe of cocycles. One of the main purpose is to avoid the ergodicity condition on the dynamics over the base $T:X\to X$, which will be useful in some applications of our main results for discrete-time cocycles to dynamical cocycles (see \S\ref{dynamical}).

In this section we start by recalling the $L^p$-upper semicontinuity of the entropy function from Arbieto and Bochi ~\cite{AB}, and some elementary facts on exterior power and their relation to Lyapunov exponents. We revisit then the strategy of Bochi and Viana ~\cite{BV2} and give the (simplified) versions of some results adapted to our $L^p$ setting. We finish the section with the proof of Theorem~\ref{ops}. We inform the reader that our notation differs slightly from that of ~\cite{BV2}.

\subsubsection{The upper semicontinuity of the entropy function}\label{ArBo}
For $k=1,\ldots,d$ and $A\in\mathcal G_{\!I\!C} $ let $$\hat{\lambda}_k(A,x):= \lambda_1(A,x)+\ldots+\lambda_k(A,x)\quad\textrm{and}\quad\Lambda_k(A):=\int_X \hat\lambda_k(A,x)\,d\mu.$$

It was proved in ~\cite{AB} that $ A\mapsto \Lambda_k(A)$ is upper semicontinuous for all $k=1,\ldots,d$, with respect to the $L^p$-like topology, that is, for every $1\leq p \leq \infty$, $A\in\gic$ and $\varepsilon>0$ there exists $0<\delta<1$ such that, if $d_p(A,B)<\delta$ then $\Lambda_k(B)\leq \Lambda_k(A)+\varepsilon$. Moreover, $\Lambda_d$ is continuous on $\mathcal G_{IC}$. In particular, those results hold on the restriction of $\Lambda_k$ to the subsets $\mathcal S_{\!I\!C}$ and $\mathcal T_{\!I\!C}$ of $\mathcal G_{\!I\!C}$.

\subsubsection{Exterior powers}
The language of multilinear algebra is much appropriate when we want deal with several Lyapunov exponents (say $k$) for a cocycle $A$ by considering consider the dual problem of studying the upper Lyapunov exponent of the $k^{th}$ exterior product of $A$. Let us recall now some basic definitions. For details on multilinear algebra of operators see Arnold's book~\cite{A}.

The \emph{$k^{th}$ exterior power} of $\mathbb{R}^d$, denoted by $\wedge^{k}(\mathbb{R}^d)$, is also a vector space which satisfies $\dim(\wedge^{k}(\mathbb{R}^d))=(_{k}^{d})$. Given an orthonormal basis $\{e_{j}\}_{j=1}^d$ of $\mathbb{R}^d$, the family of exterior products $e_{j_{1}}\wedge e_{j_{2}}\wedge\ldots\wedge e_{j_{n}}$ for $j_{1}<\ldots<j_{k}$, with $j_{\alpha}\in \{1,\ldots,d\}$, constitutes an orthonormal basis of $\wedge^{k}(\mathbb{R}^d)$. Given a linear operator $A\colon \mathbb{R}^d\rightarrow \mathbb{R}^d$ we define the operator $\wedge^{k}(A)$, acting on the $k$-vector $u_{1}\wedge\ldots\wedge u_{k}$, by
$$\begin{array}{cccc}
\wedge^{k}(A)\colon & \wedge^{k}(\mathbb{R}^d) & \longrightarrow  & \wedge^{k}(\mathbb{R}^d) \\
& u_{1}\wedge\ldots\wedge u_{k} & \longmapsto  & A(u_{1})\wedge\ldots\wedge A(u_{k}).
\end{array}$$

As we already said, this operator will be very useful to prove our results since we can recover the spectrum and splitting information of the dynamics of $\wedge^{k}(A^n)$ from the one obtained by applying Oseledets' theorem to $A^n$. This information will be for the same full measure set and with this approach we deduce our results. Next, we present the multiplicative ergodic theorem for exterior power (for a proof see~\cite[Theorem 5.3.1]{A}).

\begin{lemma}\label{arnauld}
The Lyapunov exponents $\lambda_{i}^{\wedge k}(x)$ for $i\in\left\{1,\ldots,\left(_{k}^{d}\right)\right\}$, repeated
 with multiplicity, of the $k^{th}$ exterior product operator $\wedge^{k}(A)$ at $x$ are the following numbers given by the sums of the Lyapunov exponents of $A$ at $x$:
$$\sum_{j=1}^{k}\lambda_{i_{j}}(x), \text { where }1\leq i_{1}< \ldots<i_{k}\leq d.$$
This nondecreasing sequence starts with $\lambda_{1}^{\wedge k}(x)=\lambda_{1}(x)+\lambda_{2}(x)+\ldots+\lambda_{k}(x)$ and ends with $\lambda_{q(k)}^{\wedge k}(x)=\lambda_{d+1-k}(x)+\lambda_{d+2-k}(x)+\ldots+\lambda_{d}(x)$. Moreover, the splitting of $\wedge ^{k}(\mathbb{R}_{x}^{d}(i))$ for $0\leq i\leq q(k)$ (of $\wedge^{k}(A)$) associated to $\lambda_{i}^{\wedge k}(x)$ can be obtained from the splitting $\mathbb{R}_{x}^{d}(i)$ (of $A$) as follows; take an Oseledets basis $\{e_{1}(x),\ldots,e_{d}(x)\}$ of  $\mathbb{R}_{x}^{d}$ such that $e_{i}(x)\in E_{p}^{\ell}$ for $\dim(E_{x}^{1})+\ldots+\dim(E_{x}^{\ell-1})<i\leq \dim(E_{x}^{1})+\ldots+\dim(E_{x}^{\ell})$. Then, the Oseledets space is generated by the $k$-vectors:
$$e_{i_{1}}\wedge \ldots\wedge e_{i_{k}}\text { such that } 1\leq i_{1}<\ldots<i_{k}\leq d \text { and } \sum_{j=1}^{k}\lambda_{i_{j}}(x)=\lambda_{i}^{\wedge k}(x).$$
\end{lemma}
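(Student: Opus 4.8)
The plan is to deduce the statement from the two-sided multiplicative ergodic theorem of Oseledets \cite{O} applied directly to the exterior-power cocycle $\wedge^k(A)$, and then to match the resulting Oseledets data with that of $A$. First I would record the two structural facts that make this legitimate. Functoriality of the exterior power, $\wedge^k(BC)=\wedge^k(B)\wedge^k(C)$, gives $(\wedge^k A)^n(x)=\wedge^k\!\big(A^n(x)\big)$ for every $n\in\mathbb Z$; and the bound $\|\wedge^k(B)\|\le\|B\|^k$ (the operator norm of $\wedge^k(B)$ being the product of the $k$ largest singular values of $B$) yields $\log^+\|\wedge^k(A^{\pm1}(x))\|\le k\log^+\|A^{\pm1}(x)\|$, so that $\wedge^k(A)\in\gic$ with $d$ replaced by $\binom{d}{k}$. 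Hence Oseledets' theorem applies to $\wedge^k(A)$ and provides, for $\mu$-a.e. $x$, a splitting of $\wedge^k(\mathbb R^d_x)$ into Oseledets subspaces with well-defined forward and backward exponents; it remains to identify them.

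Next I would work on the full-measure set where Oseledets' theorem holds simultaneously for $A$ and for $\wedge^k(A)$, and fix an Oseledets basis $\{e_1(x),\ldots,e_d(x)\}$ of $\mathbb R^d_x$ adapted to the splitting of $A$, with $e_i(x)$ in the block of exponent $\lambda_i(x)$ and $\lim_{n\to\pm\infty}\frac1n\log\|A^n(x)e_i(x)\|=\lambda_i(x)$. The decomposable $k$-vectors $e_I(x):=e_{i_1}(x)\wedge\cdots\wedge e_{i_k}(x)$, indexed by $I=\{i_1<\cdots<i_k\}\subseteq\{1,\ldots,d\}$, form a basis of $\wedge^k(\mathbb R^d_x)$; and since $\wedge^k(A(x))e_I(x)=(A(x)e_{i_1}(x))\wedge\cdots\wedge(A(x)e_{i_k}(x))$ keeps each factor in the corresponding invariant Oseledets block, the partition of $\{e_I\}$ by growth rate will be $\wedge^k(A)$-invariant. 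The heart of the argument is then the estimate
\[
\lim_{n\to\pm\infty}\frac1n\log\big\|\wedge^k(A^n(x))\,e_I(x)\big\|=\lambda_{i_1}(x)+\cdots+\lambda_{i_k}(x)=:\lambda_I(x).
\]
The upper bound is immediate from $\|v_1\wedge\cdots\wedge v_k\|\le\|v_1\|\cdots\|v_k\|$. For the lower bound I would read $\|\wedge^k(A^n(x))e_I(x)\|$ as the $k$-volume spanned by $A^n(x)e_{i_1}(x),\ldots,A^n(x)e_{i_k}(x)$ and show it differs from $\prod_j\|A^n(x)e_{i_j}(x)\|$ only by a subexponential factor: when the indices in $I$ lie in distinct Oseledets blocks this is precisely the subexponential (tempered) decay of the angle between the images of different Oseledets subspaces, which is part of the Oseledets package; when several indices share a block one additionally uses that over a block where all Lyapunov exponents coincide every singular value of $A^n$ restricted to the block — in particular the smallest — grows at the common block rate, hence so does the smallest singular value of its exterior powers.

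Finally I would assemble the conclusion. The basis $\{e_I(x)\}$ of $\wedge^k(\mathbb R^d_x)$ thus consists of vectors with well-defined two-sided exponential rates $\lambda_I(x)$, so grouping them by the value of $\lambda_I(x)$ and taking spans yields a $\wedge^k(A)$-invariant splitting on which the Lyapunov exponent is constant and equal to that value; by uniqueness of the Oseledets decomposition this is the Oseledets splitting of $\wedge^k(A)$. Therefore its Lyapunov spectrum, counted with multiplicity, is exactly $\{\lambda_I(x):|I|=k\}$, i.e. all sums $\sum_{j=1}^k\lambda_{i_j}(x)$ with $1\le i_1<\cdots<i_k\le d$, the Oseledets subspace attached to a prescribed value $\lambda_i^{\wedge k}(x)$ being the span of those $e_I(x)$ with $\lambda_I(x)=\lambda_i^{\wedge k}(x)$ — which is the last assertion of the lemma. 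That the extreme values of this finite set are $\lambda_1(x)+\cdots+\lambda_k(x)$ and $\lambda_{d+1-k}(x)+\cdots+\lambda_d(x)$ is then a one-line rearrangement remark using $\lambda_1(x)\ge\cdots\ge\lambda_d(x)$. I expect the lower bound in the displayed limit to be the only genuine obstacle: the rest is formal, whereas keeping the $k$-volume from collapsing faster than $e^{n\lambda_I(x)}$ is exactly where Lyapunov regularity — subexponential decay of angles between Oseledets subspaces, plus the internal singular-value estimate when indices repeat within a block — must be invoked.
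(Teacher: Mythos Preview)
Your argument is correct and is essentially the standard proof of the multiplicative ergodic theorem for exterior powers. Note, however, that the paper does not actually prove this lemma: it simply cites \cite[Theorem~5.3.1]{A} for the proof, so there is no in-paper argument to compare against. Your sketch --- integrability of $\wedge^k(A)$ via $\|\wedge^k B\|\le\|B\|^k$, computing two-sided growth rates on the decomposable basis $\{e_I(x)\}$, and invoking uniqueness of the Oseledets splitting --- is exactly the route taken in Arnold's book, including the use of tempered angles for the lower $k$-volume bound.
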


\subsubsection{Bochi-Viana's strategy revisited}\label{BVrevisited}

The following result is the $L^p$ version of ~\cite[Proposition 7.1]{BV2} which can be very simplified in the weak topologies that we are using. For the reader who is familiar with ~\cite{BV2}, we substantially simplify their proof because the \emph{third case} in the proof of ~\cite[Proposition 7.1]{BV2}, which deals with the concatenation of a large amount of small $C^0$-perturbations in the absent of a certain type of non-dominance, can be solved with a small sole $L^p$-perturbation. We can summarize by saying that the dominated splitting ceases to be an impediment of interchanging Oseledets directions by small $L^p$-perturbations.

\begin{lemma}\label{rot3}
Let be given $A\in \mathcal{T}_{\!I\!C}$, $1\leq p < \infty$, $\epsilon>0$, $y\in X$ a nonperiodic point and a nontrivial
splitting $\mathbb{R}^d= E\oplus F$ over $y$. Then, there exists $B\in \mathcal{T}_{\!I\!C}$, with $d_p(A,B)<\epsilon$, such
that $B(y) u = v$ for some nonzero vectors $u\in E$ and $v\in A(y) F$.
\end{lemma}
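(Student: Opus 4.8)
The goal is to perturb $A$ in the $L^p$-topology so that the perturbed cocycle $B$ sends some nonzero vector $u\in E$ to some vector $v\in A(y)F$, all happening over the single point $y$. Since $y$ is nonperiodic, the orbit of $y$ is an infinite set, and modifying $A$ on a small piece of the orbit (or even a single orbit point) costs little in the $L^p$-metric precisely because $L^p$-smallness only requires the integral of the $p$-th power of the difference to be small, while the uniform-norm change may be large. The first thing I would do is make this quantitative: given $\epsilon>0$, choose a measurable set $V\ni y$ of sufficiently small measure such that altering $A$ arbitrarily (within a bounded family) on $V$ keeps $d_p(A,B)<\epsilon$; the existence of such $V$ follows from non-atomicity of $\mu$ and absolute continuity of the integral $\int_V (\cdot)\,d\mu$. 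One must also ensure the altered maps land back in $\mathcal{T}_{IC}$ and that the integrability condition on $B^{\pm1}$ is preserved — both are automatic if the perturbation is through elements of the accessible group $\mathcal{S}$ with uniformly bounded norm.

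\medskip

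\noindent The core algebraic step is then to produce the desired linear map at $y$. Consider the nontrivial splitting $\mathbb{R}^d=E\oplus F$ over $y$; pick any nonzero $u\in E$ and note $A(y)F$ is a nontrivial subspace (since $F\neq\{0\}$ and $A(y)$ is invertible). Let $v\in A(y)F$ be nonzero. By accessibility (Lemma~\ref{rot}) applied in projective space, there is $R_{\bar u,\bar v}\in\mathcal S$ with $\|R_{\bar u,\bar v}^{\pm1}\|\le K$ sending the direction of $u$ to the direction of $v$; after rescaling by a bounded scalar inside $\mathcal S$ (allowed if $\mathcal S$ is, say, $\GL$ — in $\SL$ one uses a saddle-conservative-type element to absorb the determinant, but for this lemma we only need $\mathcal S$ accessible, so one works projectively and accepts $v$ up to a scalar, which still lies in $A(y)F$) we may arrange $R u = v$. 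Now define $B(y):=R\,\text{(something)}$; more precisely, I would set $B(x):=A(x)$ for $x\notin V$ and, on $V$, compose $A$ with a transition through $R$ so that the net effect at $y$ is $B(y)=R'$ with $R'u=v$, while the composition along the orbit segment in $V$ is controlled. Actually the cleanest route: since we only need the condition at the single point $y$, set $B(y):=R''$ where $R''\in\mathcal S$ is chosen (again by Lemma~\ref{rot}, now applied to move the full direction, combined with a bounded scaling) so that $R''u=v$ directly, and leave $B=A$ elsewhere; one checks $R''$ exists because $u,v$ are both nonzero and $\mathcal S$ acts transitively on $\mathbb{RP}^{d-1}$, and $\|R''^{\pm1}\|$ is bounded by $K$ times a scalar factor depending only on $|v|/|u|$, which is a fixed finite number once $u,v$ are fixed.

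\medskip

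\noindent The final bookkeeping: with $V$ chosen so small that $\int_V(\|A(x)\|+\|A^{-1}(x)\|+\|R''\|+\|R''^{-1}\|)^p\,d\mu$ is below the threshold making $d_p(A,B)<\epsilon$, and with $B$ agreeing with $A$ off $V$, we get $d_p(A,B)<\epsilon$, $B\in\mathcal T_{IC}$ (closedness of the accessible group and the uniform bound give integrability of $B^{\pm1}$), and $B(y)u=v\in A(y)F$ by construction. The main obstacle, and the point deserving care, is the interface between ``modifying on a positive-measure set $V$'' and ``prescribing the value at the single point $y$'': since cocycles are only defined $\mu$ mod $0$, the value $B(y)$ at one point is not meaningful in isolation — so one must phrase the perturbation as prescribing $B$ on all of $V$ (e.g. $B\equiv R''$ on $V$, or $B(x)=R'' $ for $x$ in a small neighborhood), and then the statement ``$B(y)u=v$'' is to be read as holding on this representative, which is exactly how it will be used downstream in the iteration scheme. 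Compared to \cite[Proposition 7.1]{BV2}, the dominated-splitting obstruction simply does not arise: there is no need to concatenate many perturbations along a long orbit segment, because one bounded $L^p$-perturbation on a set of small measure already realizes the interchange of directions.
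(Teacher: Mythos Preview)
Your proposal is correct and follows essentially the same approach as the paper: use Lemma~\ref{rot} to obtain a bounded group element realizing the desired projective transition, then redefine the cocycle on a small-measure neighborhood $V_\epsilon\ni y$ so that $d_p(A,B)<\epsilon$. The only cosmetic difference is that the paper sets $B(x)=A(x)\,R_{\hat u,\hat v}$ on $V_\epsilon$ (with $R_{\hat u,\hat v}$ sending a direction $\hat u\in E$ to a direction $\hat v\in F$, so that $A(y)$ then carries the image into $A(y)F$), whereas you replace $A$ on $V$ by a fixed $R''$ mapping $u$ directly to a vector in $A(y)F$; your extended discussion of scalar factors and the mod-$0$ subtlety is accurate but not needed for the argument.
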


\begin{proof}
By Lemma~\ref{rot} there exists $K>0$ such that for $\hat{u},\hat{v}\in\mathbb RP^{d-1}$ with $u=\alpha\hat{u}\in E$ and $\hat{v}\in F$,
there is $R_{\hat u,\hat v}\in S$, with $\|R_{\hat u,\hat v}^{\pm1}\|\leq K$ such that $R_{\hat u,\hat v}\hat u=\hat v$. Let $V_\epsilon$
 be a small neighborhood of $y$ and we define the
following perturbation of $A$:
$$B(x)=\left\{\begin{array}{lll}
A(x),&&\textrm{if}\, x\notin V_\epsilon \\
\frac{1}{\|u\|}A(x) R_{\hat u,\hat v},&&\textrm{if}\, x\in V_\epsilon
\end{array}\right.,$$
It is clear that $d_p(A,B)<\epsilon$ if $V_\epsilon$ is sufficiently small. Moreover, $B(y) u\in A(y) F$.
 \end{proof}

The following proposition if the adapted version of ~\cite[Proposition 7.2]{BV2}.  Bearing in mind the aims we want to achieve we enumerate the main differences between them:

\begin{enumerate}
\item First of all we are using the $L^p$-like topology instead of the much more exigent $C^0$ topology. As a consequence, interchanging Oseledets' directions if a more simple task (compare Lemma~\ref{rot3} with ~\cite[Proposition 7.1]{BV2});
\item We observe that in \cite[Proposition 7.2]{BV2} is considered the subset $\Gamma^*_p(A,m)$ of points without an $m$-dominated splitting of index $k$. In our setting the dominated splitting is no more an obstruction to cause a decay on the Lyapunov exponents. For this reason we perform the perturbations in a full measure subset of $X$;
\item In \cite[Proposition 7.2]{BV2} the change of Oseledets directions are performed using several perturbations. On the contrary, due to Lemma~\ref{rot3}, in the present paper we only need one single perturbation which is done, more or less, on a half time iterate:
\end{enumerate}

\begin{proposition}\label{P1}
Consider $A\in \mathcal{T}_{\!I\!C}$, $\epsilon>0$, $\delta>0$ and $k\in\{1,\ldots,d-1\}$. There exists a measurable function  $N\colon X\rightarrow \mathbb{N}$ such that for $\mu$-a.e. $x\in X$ and every $n\geq N(x)$ there exists a linear map $B(T^{\frac{n}{2}}(x))$ (or $B(T^{\frac{n+1}{2}}(x))$ if $n$ is odd) such that:
$$\frac{1}{n}\log\|\wedge^k(A^{\frac{n}{2}-1}(T^{\frac{n}{2}+1}(x))\cdot B(T^{\frac{n}{2}}(x))\cdot A^{\frac{n}{2}}(x))\|\leq \delta+\frac{\hat\lambda_{k-1}(A,x)+\hat\lambda_{k+1}(A,x)}{2}.$$
\end{proposition}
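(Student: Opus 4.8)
\textbf{Proof strategy for Proposition \ref{P1}.}
The plan is to use the exterior power language of Lemma~\ref{arnauld} to reduce the statement to a first‑exponent estimate, and then to split the orbit of $x$ at (roughly) half time, interchanging Oseledets directions once by a single $L^p$‑perturbation as provided by Lemma~\ref{rot3}. First I would fix $A\in\mathcal{T}_{\!I\!C}$, $\epsilon>0$, $\delta>0$ and $k\in\{1,\dots,d-1\}$, and apply Oseledets' theorem: for $\mu$‑a.e. $x$ the limits defining $\lambda_i(A,x)$ exist and the Oseledets splitting $\mathbb{R}^d=E^1_x\oplus\cdots$ is defined. Working with $\wedge^k A$, by Lemma~\ref{arnauld} the top exponent of $\wedge^k(A)$ at $x$ is $\hat\lambda_k(A,x)$, and the relevant geometric datum is the codimension‑one decomposition of $\wedge^k(\mathbb{R}^d)$ at $x$ into the direction $\xi_x:=e_1(x)\wedge\cdots\wedge e_k(x)$ realizing $\hat\lambda_k$ and a complement; the point of the bound is to land, after the perturbation, in a $k$‑plane whose forward growth is governed by the \emph{second} largest exterior exponent, which by Lemma~\ref{arnauld} equals $\hat\lambda_{k-1}(A,x)+\lambda_{k+1}(A,x)=\hat\lambda_{k+1}(A,x)-\lambda_k(A,x)$... more precisely one wants the average of $\hat\lambda_{k-1}$ and $\hat\lambda_{k+1}$, which is the arithmetic mean of the largest and second‑largest values of $\hat\lambda_k$‑type sums along the split orbit.

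The core estimate would then be organized as follows. For $\mu$‑a.e. $x$ choose, via Egorov/Birkhoff applied to the Oseledets convergence, a measurable function $N(x)$ so that for all $n\ge N(x)$ the first half‑orbit growth of $\wedge^k(A^{n/2}(x))$ restricted to $\xi_x$ is at most $e^{(n/2)(\hat\lambda_k(A,x)+\delta/4)}$, while along the second half‑orbit, starting from the forward image point $z:=T^{n/2}(x)$, the growth of $\wedge^k(A^{n/2}(z))$ restricted to the \emph{sub‑top} Oseledets direction of $\wedge^k$ at $z$ (the one associated to $\hat\lambda_{k+1}(A,z)-\lambda_{k+1}(A,z)$, i.e. replacing $e_k$ by $e_{k+1}$) is at most $e^{(n/2)(\hat\lambda_{k-1}(A,x)+\lambda_{k+1}(A,x)+\delta/4)}$; here I use that $\hat\lambda_j(A,\cdot)$ is $T$‑invariant $\mu$‑a.e. so the values at $z$ equal those at $x$. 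Now apply Lemma~\ref{rot3} at the nonperiodic point $z$ (a.e. point is nonperiodic since $\mu$ is non‑atomic) with the nontrivial splitting of $\wedge^k(\mathbb{R}^d)$ into the top direction $\xi_z$ and its Oseledets complement: this yields $B(z)=B(T^{n/2}(x))$ (with the odd‑$n$ variant at $T^{(n+1)/2}(x)$) with $d_p(A,B)<\epsilon$ and sending the image of $\xi_x$ under $\wedge^k(A^{n/2}(x))$—which, up to the controlled top‑growth factor, is aligned with $\xi_z$—into $\wedge^k(A^{n/2}(z))$ applied to a vector in the complement. Then the composed operator $\wedge^k(A^{n/2-1}(T^{n/2+1}(x)))\cdot\wedge^k(B(T^{n/2}(x)))\cdot\wedge^k(A^{n/2}(x))$ evaluated on $\xi_x$ has log‑norm bounded by the sum of the two half‑orbit exponents plus a bounded contribution of $\log K$ (the perturbation size from Lemma~\ref{rot}) and the normalizing factor, all of which is $o(n)$; dividing by $n$ and taking $n\ge N(x)$ large gives the claimed bound $\delta+\tfrac12(\hat\lambda_{k-1}(A,x)+\hat\lambda_{k+1}(A,x))$.

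Two technical points need care. First, the perturbation $B$ produced by Lemma~\ref{rot3} depends on $n$ (its support shrinks with $n$), so strictly speaking one obtains, for each $n\ge N(x)$, a possibly different perturbed map; the statement as phrased only asserts the existence of the single linear map $B(T^{n/2}(x))$ realizing the bound for that $n$, which is exactly what Lemma~\ref{rot3} delivers, so this is consistent. Second, one must check that $\|u\|,\|v\|$ and the alignment defect between $\wedge^k(A^{n/2}(x))\xi_x$ and $\xi_z$ contribute only subexponentially: this follows because the Oseledets directions vary measurably and, on the full‑measure set where $N(x)$ is defined, the angles between Oseledets subspaces decay at most subexponentially (Oseledets' theorem), so $\tfrac1n\log(\text{alignment defect})\to 0$.

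\textbf{Main obstacle.} The delicate step is the geometric bookkeeping of the exterior‑power splitting: verifying that after the single interchange at time $n/2$ the $k$‑vector $\xi_x$ is carried into (a subexponential multiple of) a $k$‑plane whose second‑half growth is controlled by $\hat\lambda_{k-1}(A,x)+\lambda_{k+1}(A,x)$ rather than by the top value $\hat\lambda_k$. This is precisely where one exploits that interchanging a single Oseledets direction (demoting $e_k$ past $e_{k+1}$) drops the $\wedge^k$‑exponent by exactly $\lambda_k-\lambda_{k+1}$, and where the absence of any domination hypothesis—unlike in \cite[Proposition 7.2]{BV2}—is harmless because we are free to perform the perturbation on a full‑measure set and need only one perturbation at (approximately) half time.
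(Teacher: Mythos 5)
Your route is the one the paper intends: Proposition~\ref{P1} is the $L^p$ adaptation of \cite[Proposition 7.2]{BV2}, proved by splitting the orbit at half time, defining $N(x)$ only from the a.e.\ Oseledets asymptotics, performing a single interchange of directions via accessibility (Lemma~\ref{rot3}), and your arithmetic $\tfrac12\bigl(\hat\lambda_k+\hat\lambda_{k-1}+\lambda_{k+1}\bigr)=\tfrac12\bigl(\hat\lambda_{k-1}+\hat\lambda_{k+1}\bigr)$ is exactly the intended one. However, the operative step as you state it does not work: you cannot ``apply Lemma~\ref{rot3} at $z$ with the splitting of $\wedge^k(\mathbb{R}^d)$ into $\xi_z$ and its Oseledets complement''. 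The object the proposition asks for is a single $d\times d$ matrix $B(T^{n/2}(x))$ in the group, so the perturbation at the exterior-power level must be of the form $\wedge^k(B(z))$; a perturbation of the system $\wedge^k A$ produced by Lemma~\ref{rot3} need not be induced by any such matrix, and in general the induced action on the projectivization of $\wedge^k(\mathbb{R}^d)$ is not even transitive (it preserves decomposable $k$-vectors), so the accessibility hypothesis is unavailable there. The correct move, which you only describe informally in your final paragraph, is to apply the lemma on $\mathbb{R}^d$ at $z=T^{n/2}(x)$ with $E$ the fast block $\mathrm{span}\,(e_1(z),\dots,e_k(z))$ and $F$ the slow block, obtaining $B(z)$ with $B(z)u\in A(z)F$ for some $u\in E$, and then to check by multilinearity that $\wedge^k(B(z))\,\xi_z$ is a combination of wedges each containing at least one slow factor, so that its growth under $\wedge^k\bigl(A^{n/2-1}(T^{n/2+1}x)\bigr)$ is at rate at most $\hat\lambda_{k-1}+\lambda_{k+1}$. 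That bookkeeping is the real content of the step and is missing from your sketch.

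The second gap is that the proposition bounds the operator norm $\|\wedge^k(\cdot)\|$ of the composition, while your core estimate only controls its value on the single $k$-vector $\xi_x$. You must also treat an arbitrary unit $k$-vector, i.e.\ the component in the complementary Oseledets directions of $\wedge^k$ at $x$: there the first half already grows at rate at most $\hat\lambda_{k-1}+\lambda_{k+1}$ (no perturbation needed), and the second half at rate at most $\hat\lambda_k$, which gives the same average; this requires the same uniform control of $\|\wedge^k A^{n/2-1}(T^{n/2+1}x)\|$ at the moving half-orbit point that you invoke elsewhere (your Egorov/Birkhoff reduction is the right tool, and it is exactly why $N$ depends only on the Oseledets asymptotics), together with a constant $C(x)$ for the angle of the splitting at $x$ and a subexponential bound on $\|B(z)\|$, which is available since $B(z)=\frac{1}{\|u\|}A(z)R$ with $\|R^{\pm1}\|\le K$. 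Neither repair needs a new idea, but as written your argument establishes a strictly weaker conclusion than the stated inequality.
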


We notice that $\|B(T^{\frac{n}{2}}(x))-A(T^{\frac{n}{2}}(x))\|$ can be, in general, very large. However, this is not a problem because the whole cocycle $B$ will be equal to $A$ outside a small neighborhood, thence $d_p(A,B)$ will be arbitrarily small for $1\leq p < \infty$. Moreover, let us note that the function $N$  above depends only on the a.e. asymptotic estimates given by Oseledets' theorem.

The following proposition is the adapted version of ~\cite[Proposition 7.3 and Lemma 7.4]{BV2} which fulfills the global picture of Proposition~\ref{P1}. We observe that its proof follows the same steps traversed in ~\cite{BV2}.

\begin{proposition}\label{P2}
Let be given $A\in \mathcal{T}_{\!I\!C}$, $1\leq p < \infty$, $\epsilon>0$, $\delta>0$ and $k\in\{1,\ldots,d-1\}$. There exists $B\in \mathcal{T}_{\!I\!C}$, with $d_p(A,B)<\epsilon$, such that
$$\Lambda_k(B)<\delta+\frac{\Lambda_{k-1}(A)+\Lambda_{k+1}(A)}{2}.$$

\end{proposition}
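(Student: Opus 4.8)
\textbf{Proof proposal for Proposition \ref{P2}.}

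The plan is to globalize the pointwise estimate of Proposition~\ref{P1} by an integration argument, exploiting the $L^p$-topology to make the total size of the perturbation small. First I would fix $A\in\tic$, $1\leq p<\infty$, $\epsilon>0$, $\delta>0$ and $k\in\{1,\dots,d-1\}$, and invoke Proposition~\ref{P1} (applied with $\delta/2$ in place of $\delta$) to obtain a measurable function $N\colon X\to\mathbb N$ and, for $\mu$-a.e.\ $x$ and every $n\geq N(x)$, a single-coordinate modification of $A$ at the time $T^{\lfloor n/2\rfloor}(x)$ (or $T^{\lceil n/2\rceil}(x)$) witnessing the desired logarithmic growth bound for the $k^{th}$ exterior power of the modified orbit segment of length $n$. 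Since $N$ is measurable and finite a.e., I would choose $n$ large enough that the set $X_n:=\{x\in X\colon N(x)\leq n\}$ has measure $\mu(X_n)>1-\eta$ for a small $\eta$ to be fixed later.

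Next I would assemble the global perturbation $B$. The perturbations from Proposition~\ref{P1} are each supported on a tiny neighborhood of a single point of the orbit; the key point is that, since we only perturb ``around half-time'', along a single orbit segment of length $n$ the perturbation is performed at one instant, and one can arrange (using a Kakutani--Rokhlin tower of height $n$ over $X$, or more elementarily a measurable selection of disjoint small neighborhoods) that the supports of all these local modifications, as $x$ ranges over a positive-measure set of ``bases'', are pairwise disjoint and have arbitrarily small total measure. On this small-measure set $B$ differs from $A$ by a factor of the bounded rotation $R_{\hat u,\hat v}$ (with $\|R_{\hat u,\hat v}^{\pm1}\|\leq K$ from Lemma~\ref{rot}) rescaled to lie in $\tic$; off it, $B=A$. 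Because $1\leq p<\infty$, choosing the total measure of the support small enough forces $\|A-B\|_p$ and $\|A^{-1}-B^{-1}\|_p$ to be small, hence $d_p(A,B)<\epsilon$; this is exactly the mechanism already used in Lemma~\ref{rot3}, and it is where the $L^p$-topology (as opposed to $C^0$) is essential.

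Finally I would pass from the pointwise bound to the bound on $\Lambda_k(B)$. For $x\in X_n$ the orbit segment of length $n$ for $B$ coincides, up to the single modification, with the one controlled by Proposition~\ref{P1}, so
$$\frac1n\log\bigl\|\wedge^k\bigl(B^n(x)\bigr)\bigr\|\leq \frac{\delta}{2}+\frac{\hat\lambda_{k-1}(A,x)+\hat\lambda_{k+1}(A,x)}{2}.$$
Using that $\hat\lambda_k(B,x)=\lambda_1^{\wedge k}(B,x)\leq \frac1n\log\|\wedge^k(B^n(x))\|$ for all $n$ (subadditivity of $\log\|\wedge^k(B^n)\|$ together with Lemma~\ref{arnauld}), integrating over $X_n$, and controlling the contribution of $X\setminus X_n$ by the integrability of the $\hat\lambda_j$ (making $\eta$ small), I obtain
$$\Lambda_k(B)=\int_X\hat\lambda_k(B,x)\,d\mu\leq \delta+\frac{\Lambda_{k-1}(A)+\Lambda_{k+1}(A)}{2},$$
after absorbing the error terms into the remaining $\delta/2$.

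The main obstacle I expect is the bookkeeping in the globalization step: one must choose the local neighborhoods in Proposition~\ref{P1} simultaneously for a.e.\ base point so that their supports are genuinely disjoint (or at least have controlled overlap) along orbits and have small total $\mu$-measure, while still keeping the half-time modification compatible with the Oseledets estimates; the Kakutani--Rokhlin tower argument handles this but requires care since $N$ is only measurable, not bounded, so one truncates to $X_n$ first and treats the exceptional set separately using integrability. The estimate on $d_p(A,B)$ and the integration itself are then routine.
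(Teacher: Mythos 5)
Your overall route coincides with the paper's: the paper offers no independent argument for Proposition~\ref{P2}, stating only that the proof follows the steps of Proposition 7.3 and Lemma 7.4 of \cite{BV2}, and that is precisely the scheme you describe --- truncate the measurable function $N$ of Proposition~\ref{P1} to a set $X_n=\{N\leq n\}$ of large measure, distribute the single half-time modifications over a Kakutani--Rokhlin tower of height $n$ so that their supports are pairwise disjoint and of small total measure (which gives $d_p(A,B)<\epsilon$, the only place the $L^p$-topology enters), and then integrate the estimate of Proposition~\ref{P1}.

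One step, however, is wrong as written and needs the standard repair. The pointwise inequality $\hat\lambda_k(B,x)\leq \tfrac1n\log\|\wedge^k(B^n(x))\|$ for a \emph{fixed} $n$ is false in general: the exponent is a limit of Birkhoff-type averages, and for fixed $n$ the average at a given $x$ can lie strictly below the limit (already for scalar cocycles). What is true, and what the Bochi--Viana computation actually uses, is the integrated subadditive bound $\Lambda_k(B)=\inf_n\tfrac1n\int_X\log\|\wedge^k(B^n(x))\|\,d\mu$. In particular you cannot integrate your pointwise bound over $X_n$ alone and then estimate $\int_{X\setminus X_n}\hat\lambda_k(B,x)\,d\mu$ ``by integrability of the $\hat\lambda_j$'': $X_n$ is not $T$-invariant, and you have no uniform control of the exponents of the perturbed cocycles $B$ on that set. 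The correct bookkeeping is to bound $\Lambda_k(B)$ by $\tfrac1n\int_X\log\|\wedge^k(B^n(x))\|\,d\mu$ for the chosen $n$, split \emph{this} integral into the tower part, where Proposition~\ref{P1} applies and yields $\delta/2+\tfrac12(\Lambda_{k-1}(A)+\Lambda_{k+1}(A))$ after integration, and the exceptional part, where $\tfrac1n\log\|\wedge^k(B^n)\|$ is dominated by a Birkhoff sum of $k\log^+\|B\|$; since the modification factors are uniformly bounded (Lemma~\ref{rot}), $\log^+\|B\|$ is dominated by $\log^+\|A\|$ plus a constant, and the uniform absolute continuity of the integral of this fixed integrable function over sets of measure $\mu(X\setminus X_n)$ (the sets $T^j(X\setminus X_n)$ all have the same small measure) absorbs this contribution into the remaining $\delta/2$. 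With that rearrangement your argument is exactly the proof the paper points to in \cite{BV2}.
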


The end of the proof of Theorem~\ref{ops} is now a direct consequence of the arguments described in ~\cite[\S 4.3]{BV2} and ~\cite{AB} and the results proved above. We will present them now for the sake of completeness.

For each $k=1,\ldots,d-1$ we define the \emph{discontinuity jump} by:
$$J_k(A)=\int_X\frac{\lambda_k(A,x)-\lambda_{k+1}(A,x)}{2}\,d\mu.$$

The following result is Proposition~\ref{P2} rewritten.

\begin{proposition}\label{P3}
Given $A\in \mathcal{T}_{\!I\!C}$, $1\leq p < \infty$, $\epsilon>0$, $\delta>0$ and $k\in\{1,\ldots,d-1\}$, there exists $B\in \mathcal{T}_{\!I\!C}$, with $d_p(A,B)<\epsilon$, such that $$ \Lambda_k(B)<\delta-J_k(A)+ \Lambda_{k}(A).$$
\end{proposition}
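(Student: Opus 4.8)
The plan is to derive Proposition~\ref{P3} from Proposition~\ref{P2} by a direct algebraic manipulation of the inequality, using only the definition of $J_k(A)$ and the elementary identity $\hat\lambda_{k-1}+\hat\lambda_{k+1}=2\hat\lambda_k+(\lambda_{k+1}-\lambda_k)$ applied pointwise and then integrated. Concretely, first I would apply Proposition~\ref{P2} with the same $A$, $p$, $\epsilon$, $k$ and with the given $\delta$, obtaining $B\in\tic$ with $d_p(A,B)<\epsilon$ and
$$\Lambda_k(B)<\delta+\frac{\Lambda_{k-1}(A)+\Lambda_{k+1}(A)}{2}.$$
The only thing left is to rewrite the right-hand side.

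Next I would observe that for $\mu$-a.e.\ $x$ one has the pointwise identity
$$\frac{\hat\lambda_{k-1}(A,x)+\hat\lambda_{k+1}(A,x)}{2}=\hat\lambda_k(A,x)-\frac{\lambda_k(A,x)-\lambda_{k+1}(A,x)}{2},$$
which is immediate from $\hat\lambda_{k-1}=\hat\lambda_k-\lambda_k$ and $\hat\lambda_{k+1}=\hat\lambda_k+\lambda_{k+1}$. Integrating this over $X$ against $\mu$ and using the definitions $\Lambda_j(A)=\int_X\hat\lambda_j(A,x)\,d\mu$ and $J_k(A)=\int_X\frac{\lambda_k(A,x)-\lambda_{k+1}(A,x)}{2}\,d\mu$ gives
$$\frac{\Lambda_{k-1}(A)+\Lambda_{k+1}(A)}{2}=\Lambda_k(A)-J_k(A).$$
Substituting this into the inequality from Proposition~\ref{P2} yields exactly $\Lambda_k(B)<\delta-J_k(A)+\Lambda_k(A)$, which is the claim. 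One should note that when $k=1$ the symbol $\hat\lambda_0$ (equivalently $\Lambda_0$) is to be read as $0$, so the identity $\hat\lambda_{k-1}=\hat\lambda_k-\lambda_k$ still holds with the convention $\hat\lambda_0\equiv 0$, and the argument goes through unchanged; similarly for $k=d-1$ there is no boundary issue since $k+1=d\le d$.

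There is essentially no obstacle here: this proposition is, as the text says, ``Proposition~\ref{P2} rewritten,'' and the entire content is the bookkeeping identity relating the partial sums $\Lambda_{k-1},\Lambda_k,\Lambda_{k+1}$ to the spectral gap $J_k$. The only point requiring a word of care is the integrability needed to split the integral $\int_X(\hat\lambda_{k-1}+\hat\lambda_{k+1})\,d\mu$ into $\int\hat\lambda_{k-1}\,d\mu+\int\hat\lambda_{k+1}\,d\mu$ and to isolate $\int\frac{\lambda_k-\lambda_{k+1}}{2}\,d\mu$; but all of $\hat\lambda_{k-1},\hat\lambda_k,\hat\lambda_{k+1}$ are $\mu$-integrable because $A\in\gic$ satisfies the integrability condition, so $\Lambda_{k-1}(A),\Lambda_k(A),\Lambda_{k+1}(A)$ and hence $J_k(A)$ are all finite, and the manipulations are legitimate. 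Thus the proof is a one-line substitution once the identity $\tfrac12(\Lambda_{k-1}+\Lambda_{k+1})=\Lambda_k-J_k$ is recorded.
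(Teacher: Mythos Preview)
Your proof is correct and is exactly the paper's approach: the paper simply states that Proposition~\ref{P3} ``is Proposition~\ref{P2} rewritten,'' and the content of that rewriting is precisely the identity $\tfrac12(\Lambda_{k-1}(A)+\Lambda_{k+1}(A))=\Lambda_k(A)-J_k(A)$ that you spell out. You have supplied the bookkeeping the paper leaves implicit, and there is nothing further to add.
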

We are now in conditions to finish the proof of Theorem~\ref{ops}:
\begin{proof}(of Theorem~\ref{ops})
Let $A\in \mathcal{T}_{\!I\!C}$ be a continuity point of the functions $\Lambda_k$ for all $k$. Then $J_k(A)=0$ for all $k$, i.e., $\lambda_k(A,x)=\lambda_{k+1}(A,x)$ for all $k$ and $\mu$-a.e. $x\in X$. Thence, the cocycle $A$ has one-point spectrum for $\mu$-a.e. $x\in X$. Finally, we recall that the set of continuity points of an upper semicontinuous function (cf. \S\ref{ArBo}) is a residual subset.
\end{proof}

\subsection{Simple spectrum is dense}\label{proof simple}
 In this section we prove Theorem~\ref{simple} by borrowing the simple spectrum part of ~\cite[\S 4]{AC0}. We start by establishing  in Lemma~\ref{split spec} the adaptation of ~\cite[Lemma 4.1]{AC0} with some adjustments that reflect our assumptions for the cocycle. This result allows us to split a one-point Lyapunov spectrum by an $L^p$-small perturbation of the cocycle. The remaining part of the proof of Theorem~\ref{simple} follows \emph{ipsis verbis}~\cite{AC0}. Throughout this section we will assume that $T:X\to X$ is ergodic.

\begin{lemma}\label{split spec}
Assume that $A\in \mathcal{S}_{\!I\!C}\subset \GL(d,\mathbb{R})$, with $d\geq2$, has one-point
spectrum. Then, for any small $\epsilon >0$ and $1\leq p <
\infty$, there exists $B\in\mathcal \mathcal{S}_{\!I\!C}$, with
$d_p(A,B)<\epsilon$, such that $B$ has at least two different
Lyapunov exponents.
\end{lemma}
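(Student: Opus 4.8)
\textbf{Proof proposal for Lemma~\ref{split spec}.}

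The plan is to use saddle-conservativeness to inject a small but definite expansion into a carefully chosen direction, and then argue via Oseledets' theorem that this forces at least one Lyapunov exponent to exceed another. Since $A$ has one-point spectrum, all Lyapunov exponents are equal to a common value $\lambda$ for $\mu$-a.e.\ $x$ (and if the cocycle sits inside $\SL(d,\mathbb{R})$ then $\lambda=0$, but we need not assume that). First I would fix a measurable unit vector field $x\mapsto e(x)\in\mathbb{R}^d$ and a nonperiodic point $x_0$ together with a small flow-box neighborhood $V$ of $x_0$ with $\mu(V)>0$ but $\mu(V)$ as small as we like; on $V$ I will modify $A$ and leave it untouched elsewhere, so that $d_p(A,B)$ is controlled by $\|A-B\|_p+\|A^{-1}-B^{-1}\|_p$ restricted to $V$, which tends to $0$ as $\mu(V)\to 0$ because the perturbing factors have norms bounded independently of $V$ (this is where $1\le p<\infty$ is used — the same mechanism as in Lemma~\ref{rot3}).

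The key step is the choice of perturbation. By Definition~\ref{SC}, for the direction $e=e(x_0)$ and any prescribed $\delta>0$ there is $A_\delta\in\mathcal{S}_{IC}$ with $A_\delta\in\SL(d,\mathbb{R})$ and $A_\delta e=(1+\delta)e$. I would set $B(x)=A(x)$ for $x\notin V$ and $B(x)=A(x)A_\delta$ for $x\in V$ (or insert $A_\delta$ as a separate "extra" step on part of the flow box, in the style of Proposition~\ref{P1}, if that is cleaner for keeping $B$ inside $\mathcal{S}_{IC}$; note $A_\delta\in\SL(d,\mathbb{R})\subseteq\mathcal{S}_{IC}$ since $\mathcal{S}_{IC}$ is saddle-conservative and hence contains $\SL$-elements, and it is a subgroup, so the product stays in $\mathcal{S}_{IC}$, and likewise the integrability condition is preserved because $A_\delta$ is a fixed matrix). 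Then $B\in\mathcal{S}_{IC}$ and $d_p(A,B)<\epsilon$ for $V$ small.

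It remains to show $B$ has two distinct Lyapunov exponents. Here is where the real work lies. The top exponent $\lambda_1(B)$ dominates the expansion rate along orbit segments; by choosing $V$ a genuine flow-box for the nonperiodic point $x_0$ and tracking a vector that lies in the $e$-direction each time the orbit visits $V$, one gets, along a positive-density (in fact, by Birkhoff, frequency $\mu(V)$) set of returns, an extra multiplicative factor $(1+\delta)$ per visit, so $\lambda_1(B)\ge \lambda+\mu(V)\log(1+\delta)>\lambda$ — strictly bigger than the original common value. On the other hand, since $A_\delta\in\SL(d,\mathbb{R})$, the perturbation does not change the sum of all Lyapunov exponents: $\Lambda_d(B)=\Lambda_d(A)=d\lambda$ (using the continuity of $\Lambda_d$ and, more directly, that $\det B=\det A$ up to the unimodular factor $A_\delta$). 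Hence $\sum_i\lambda_i(B)=d\lambda$ while $\lambda_1(B)>\lambda$, which forces $\lambda_d(B)<\lambda<\lambda_1(B)$; in particular $B$ has at least two different Lyapunov exponents. The main obstacle I anticipate is making the lower bound $\lambda_1(B)\ge \lambda+\mu(V)\log(1+\delta)$ rigorous without an ergodicity-free argument: one must show that the inserted expansions along the direction field genuinely accumulate rather than being cancelled by the ambient cocycle — this is exactly the kind of bookkeeping that Arnold--Cong handle in \cite[Lemma 4.1]{AC0} and which, under the standing assumption that $T$ is ergodic here, can be pushed through by combining Oseledets' theorem with the Birkhoff ergodic theorem applied to $\mathbbm{1}_V$, controlling the angle between the transported vector and $e(T^n x)$ at return times via a measurable selection argument.
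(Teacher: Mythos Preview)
Your outline has the right endgame --- preserve $\det$ via the $\SL$-factor, bump one exponent strictly above $\lambda$, conclude $\lambda_d(B)<\lambda<\lambda_1(B)$ --- but the construction as written does not deliver the bump, and the paper's proof shows exactly why an extra ingredient is needed. If you set $B(x)=A(x)A_\delta$ on $V$, the matrix $A_\delta$ expands the \emph{fixed} direction $e$; but after one application of $A$ the vector $A(x)e$ need not lie anywhere near $e$, so at the next visit to $V$ the factor $A_\delta$ may contract it, or act neutrally. There is no mechanism forcing the $(1+\delta)$ factors to accumulate, and your proposed ``measurable selection / angle control'' is not a fix: with only the saddle-conservative property you have no control whatsoever over the angle between the transported vector and $e$ at return times. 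This is precisely the obstacle you flag, and it is fatal to the argument as stated.

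The paper resolves this by a \emph{two-step} perturbation that also uses accessibility, not just saddle-conservativeness. First it builds an auxiliary cocycle $C_1$ which makes a specific one-dimensional subbundle $E(x)=\mathrm{span}\{v(x)\}$ genuinely invariant: one fixes $e$, defines $v(x)$ as the normalized $A$-image of $e$ since the last visit to $T(V)$, and on $V$ replaces $A(x)$ by $R_{q(x)}A(x)$, where $R_{q(x)}$ is the rotation from Lemma~\ref{rot} sending the arriving direction $q(x)$ back to $e$. Only after $E(x)$ is invariant does one post-compose on $T(V)$ with the saddle-conservative matrix $A_\delta$ acting along $e$. Now the expansion honestly accumulates and Birkhoff gives $\lambda(D,x,v(x))=\lambda(C_1,x,v(x))+\mu(V)\log(1+\delta)$; combined with $\det D=\det A$ one gets two distinct exponents. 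The step you are missing is this preliminary ``straightening'' of the direction field via accessibility.
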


\begin{proof}
Consider $M>1$ and a Borel subset $V\subset X$, such that $\mu(V)>0$,
 $V\cap T(V)=\emptyset$,
and $$\sup_{x\in V\cup T(V)}\|A^{\pm1}(x)\|\leq M,$$  and let
$$k(x):=\min\{ n\geq 1: T^{-n}(x)\in T(V)\}.$$ Fix a unitary vector $e\in
\mathbb RP^{d-1}$ and define the following vector which is a
normalized image under the cocycle $A$ of the vector $e$, in the
fiber corresponding to $x\in X$:
 $$v(x):=\left\{\begin{array}{lll}
e,&& \textrm{if}\, x\in T(V)\\
\frac{A^{k(x)}(T^{-k(x)}(x))e}{\|A^{k(x)}(T^{-k(x)}(x))e\|},&& \textrm{otherwise}
\end{array}\right., $$ and set $E(x)=\textrm{span}\{v(x)\}$.
For each $u\in\mathbb RP^{d-1}$ fix some $R_u:=R_{u,e}$ given by
Lemma \ref{rot}, with $\|R_{u}^{\pm1}\|\leq C_1$ and such that
$R_{u}u=e$. For $x\in V$ define $q(x)\in\mathbb RP^{d-1}$ given
by $$q(x)=\frac{A(x)v(x)}{\|A(x)v(x)\|}.$$ Define now the
following perturbation of $A$ in $V$:
$$C_1(x)=\left\{\begin{array}{lll}
A(x),&&\textrm{if}\, x\notin V  \,\textrm{or}\, q(x) =  e\\
R_{q(x)}A(x),&&\textrm{if}\, x\in V \,\textrm{and}\, q(x)\neq  e
\end{array}\right.$$
Since for $x\notin V$, $C_1(x)=A(x)$ and for $x\in V$ we have
\begin{equation*}\label{dist C1 A} \|C_1^{\pm 1}(x)-A^{\pm
1}(x)\|\leq \|A^{\pm1}(x)\|\cdot\|R_{q(x)}^{\pm 1}-\textrm{Id}\|,
\end{equation*}
it follows that $d_p(A,C_1)\leq \Delta_p(A,C_1)\leq
2MK\mu(V)^{1/p}$, which can be smaller than any small
$\epsilon>0$ just considering $V$ small enough $\mu$-measure. If
$C_1$ has two or more distinct Lyapunov exponents we take $B=C_1$
and we are done.

Let us consider now that $C_1$ has only one Lyapunov exponent
$\lambda_{C_1}$. Then it must be equal to the unique Lyapunov
exponent $\lambda_A$ for $A$ (and both have multiplicity $d$).
Indeed, since
$$\det A(x)=\det {C_1}(x)$$ for all $x\in X$, by the
multiplicative ergodic theorem we have
$$d .\lambda_{C_1}=\int \log |\det {C_1}(x)|\,d\mu=\int \log |\det
A(x)|\,d\mu=d.\lambda_A.$$

Now, let $\delta\in(0,1)$. Since our group has the saddle-conservative property,  we can find  $A_\delta\in \SL(d,\mathbb{R})$ such that $A_\delta e=(1+\delta)e$. We define now:
$$C_2(x)=\left\{\begin{array}{lll}
\textrm{Id}& & \textrm{if}\, x\notin T(V)\\
A_\delta(x) & & \textrm{if}\, x\in T(V)
\end{array}\right..$$
Finally, set $$D(x)=C_2(x) C_1(x).$$ Since, for all $x\in X$
\begin{equation*}\label{inv subspaces}
D(x)E(x)=C_1(x)E(x)=E(T(x)),
\end{equation*}
by Birkhoff's ergodic theorem we have
 for any $\delta>0$
\begin{align}\lambda(D,x,v(x))&:=\lim_{n\to\infty}\frac1n\log\|D^n(x)v(x)\|\nonumber\\&=\lim_{n\to\infty}\frac1n\log\|(1+\delta)^{\sum_{j=0}^{n-1}\mathbbm
l_V(T^j(x))}C_1^n(x)v(x)\|\nonumber\\
& = \lambda(C_1,x,v(x)) + \log(1+\delta)\mu(V)\label{lyap exponent
for Dflow}.
\end{align}
Let $\lambda_{D,1}>
\lambda_{D,2}>\ldots>\lambda_{D,r_\delta}$ be the distinct
Lyapunov exponents for $D$, with the corresponding multiplicities
$m_1, \ldots, m_{r_\delta}$. Since for all $x\in X$,
$$ \det D(x)=\det C_1(x)=\det
A(x)\label{dets},
$$
by the multiplicative ergodic theorem we also have
$$
\sum_{i=1}^{r_\delta}\lambda_{D,i}. m_{i}=d.\lambda_A.
$$ By \eqref{lyap exponent
for Dflow}, for any $\delta>0$ the cocycle $D$ has a Lyapunov exponent
equal to $\lambda_A+\log(1+\delta)\mu(V)$, so we must have
$r_\delta\geq2$. Moreover, for all $\delta>0$
\begin{align*}
\|D^{\pm 1}(x)-A^{\pm 1}(x)\|&\leq \|C_2^{\pm
1}(x)-\textrm{Id}\|\cdot\|A^{\pm 1}(x)\|\\
&\leq 2M \quad\textrm{for}\, x\in T(V),\\
\|D^{\pm 1}(x)-A^{\pm 1}(x)\|&\leq \|C_1^{\pm
1}(x)-A^{\pm 1}(x)\|\\
&\leq MK \quad\textrm{for}\, x\in V,\\
D(x)&=A(x)\quad \textrm{for}\, x\notin V\cup T(V),
\end{align*}
which implies
$$d_p(A,D)\leq \Delta_p(A,D)\leq 2(2+K)M\mu(V)^{1/p}.$$
For any given $\epsilon>0$ we can consider $V$ such that
$2(2+K)M\mu(V)^{1/p}<\epsilon$ and\ we just have to consider
$B=D$.

\end{proof}

In the next lemma \cite[Lemma 4.3]{AC0} we see that, under a small perturbation, we can change slighly the Lyapunov spectrum:
\begin{lemma}\label{lit change}
Assume that $A\in \mathcal{S}_{\!I\!C}$ has Lyapunov exponents $\lambda_{A,1}>\ldots>\lambda_{A,r}$ with multiplicities $m_1,\ldots,m_r$. Then, for any $\epsilon, \delta\in(0,1)$ and  Borel $U\subset X$ with $\mu(U)>0$, there exist $\epsilon_1\in(0,1)$ and $B\in \mathcal{S}_{\!I\!C}$, with $d_p(A,B)<\epsilon$, $1\leq p\leq \infty$, such that $B(x)=A(x)$, for $x\in X\setminus U$, and $B$ has Lyapunov exponents $\lambda_{A,1}+\epsilon_1\log(1+\delta)>\ldots>\lambda_{A_r}+\epsilon_1\log(1+\delta)$,  with multiplicities $m_1,\ldots,m_r$.
\end{lemma}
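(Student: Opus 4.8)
The plan is to mimic the perturbation $D(x) = C_2(x)C_1(x)$ used at the end of the proof of Lemma~\ref{split spec}, but now only the $C_2$-type ``saddle factor'' is needed, and it must be inserted on a return tower built over the given set $U$ rather than over an auxiliary set $V$. First I would shrink $U$ if necessary to a Borel subset $W\subseteq U$ of positive measure with $W\cap T(W)=\emptyset$ and with $\sup_{x\in W\cup T(W)}\|A^{\pm 1}(x)\|\le M$ for some $M>1$; since $T$ is ergodic, almost every point returns to $W$, and one can set $k(x):=\min\{n\ge 1: T^{-n}(x)\in T(W)\}$ and push a fixed unit vector $e$ forward by $A$ along the first-return orbit segment to obtain a measurable line field $E(x)=\mathrm{span}\{v(x)\}$ with $v(x)=e$ on $T(W)$ and $v(x)=A^{k(x)}(T^{-k(x)}(x))e/\|\cdot\|$ otherwise. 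This $E$ is $A$-invariant along orbits that meet $W$, hence $A$-invariant $\mu$-a.e. by ergodicity.

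Next I would use the saddle-conservative property to pick, for the given $\delta\in(0,1)$, a matrix $A_\delta\in\SL(d,\mathbb R)$ with $A_\delta e=(1+\delta)e$, and define $C_2(x)=A_\delta$ for $x\in T(W)$ and $C_2(x)=\mathrm{Id}$ otherwise, and set $B(x)=C_2(x)A(x)$. Then $B(x)=A(x)$ for $x\notin W\cup T(W)$; since $\|B^{\pm1}(x)-A^{\pm1}(x)\|\le 2M$ on $T(W)$ (and $B=A$ on $W$ itself), the estimate $d_p(A,B)\le\Delta_p(A,B)\le CM\mu(W)^{1/p}$ holds, so choosing $\mu(W)$ small makes $d_p(A,B)<\epsilon$; this works for all $1\le p\le\infty$ because the perturbation set has small measure and the norm bound is uniform. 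By Birkhoff's theorem, exactly as in the displayed computation \eqref{lyap exponent for Dflow}, the Lyapunov exponent of $B$ along $v$ is $\lambda(A,x,v(x))+\log(1+\delta)\mu(W)$; setting $\epsilon_1:=\mu(W)\in(0,1)$ gives the shift $\epsilon_1\log(1+\delta)$ in the direction $E$.

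The remaining point — and the step I expect to be the real obstacle — is to verify that \emph{every} Lyapunov exponent of $A$, not just the one attached to $E$, is shifted by precisely $\epsilon_1\log(1+\delta)$, with the multiplicities $m_1,\dots,m_r$ preserved. The natural argument is that $C_2$ is a scalar multiple of a cocycle cohomologous (via the coboundary supported on the tower over $W$) to the constant $(1+\delta)^{\mu(W)}\mathrm{Id}$: indeed $\sum_{j=0}^{n-1}\mathbbm l_W(T^j(x))\to \mu(W)\,n$, so $\prod C_2(T^j(x))$ behaves asymptotically like $(1+\delta)^{\mu(W)n}$ times a matrix with subexponential norm, acting as the identity on all of $\mathbb R^d$ simultaneously; hence $B^n(x)=C_2$-product times $A^n(x)$ has the same Oseledets splitting as $A^n(x)$ with every exponent translated by $\mu(W)\log(1+\delta)$. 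I would make this precise by applying the multiplicative ergodic theorem to $B$ directly and comparing growth rates of $\wedge^\ell B^n$ and $\wedge^\ell A^n$ for each $\ell$ via Lemma~\ref{arnauld}, using that $\wedge^\ell C_2$-products grow like $(1+\delta)^{\ell\mu(W)n}$ up to subexponential factors; the fact that $A_\delta\in\SL$ is not needed for the conclusion about all exponents (it only mattered for $\det$ bookkeeping in Lemma~\ref{split spec}), so no determinant constraint obstructs the uniform translation here. Finally, since the shift is the same real number for all exponents, their mutual distinctness and the multiplicities are unchanged, and $\lambda_{A,i}+\epsilon_1\log(1+\delta)$ are the exponents of $B$ as claimed.
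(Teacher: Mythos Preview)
Your proposal has a genuine error precisely at the step you flag as the obstacle. You choose $A_\delta\in\SL(d,\mathbb R)$ with $A_\delta e=(1+\delta)e$; since $\det A_\delta=1$, such a matrix must contract some complementary direction. Hence $C_2$ is \emph{not} asymptotically a scalar: the product $\prod_{j<n}C_2(T^jx)$ has top singular value $\asymp(1+\delta)^{\mu(W)n}$ but bottom singular value $\asymp(1+\delta)^{-c\mu(W)n}$ for some $c>0$, so the claimed ``subexponential remainder acting as the identity on all of $\mathbb R^d$'' is false. Moreover $B^n(x)$ does not factor as $\bigl(\prod C_2\bigr)\cdot A^n(x)$, because $C_2$ and $A$ do not commute, so the exterior-power comparison you sketch cannot be carried out. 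The decisive obstruction is arithmetic: $\det B(x)=\det A(x)$ for every $x$ forces $\sum_i m_i\lambda_{B,i}=\sum_i m_i\lambda_{A,i}$, which is incompatible with $\lambda_{B,i}=\lambda_{A,i}+\epsilon_1\log(1+\delta)$ for all $i$ unless $\epsilon_1=0$.

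The paper does not prove this lemma itself; it quotes \cite[Lemma~4.3]{AC0}, whose argument is far simpler than your construction. One multiplies $A$ by a \emph{scalar} on a small set: $B(x)=(1+\delta')A(x)$ for $x$ in a set $W\subset U$ of small measure, $B=A$ elsewhere. Scalars commute with everything, so $B^n(x)v=(1+\delta')^{\sum_{j<n}\mathbbm 1_W(T^jx)}A^n(x)v$ for \emph{every} vector $v$, and Birkhoff's theorem gives the uniform shift $\mu(W)\log(1+\delta')=\epsilon_1\log(1+\delta)$ of all exponents at once; no line field, no first-return tower, and no saddle-conservative input are needed. This is consistent with how the lemma is actually used in the proof of Theorem~\ref{simple}: it is applied to the individual $\GL(m_i,\mathbb R)$-valued blocks $A_i$, where scalar multiples of the identity are available. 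A side remark: your claim that shrinking $\mu(W)$ controls $d_\infty(A,B)$ is also incorrect, since $\|\cdot\|_\infty$ is insensitive to the measure of the support; in the scalar approach one controls $d_\infty$ by taking $\delta'$ small and then chooses $\epsilon_1$ accordingly.
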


We are now in a position to argue for the proof of Theorem~\ref{simple}:

 \begin{proof}(of Theorem~\ref{simple})
 Let $\{E_1(x),\ldots,E_r(x)\}$ be the Oseledets splitting of $\mathbb{R}^d$ generated by $A\in\sic$ and let $\{A_1(x),\ldots,A_r(x)\}$ be the corresponding decomposition of $A(x)=\bigoplus_{i=1}^rA_i(x)$. The idea is to apply Lemma~\ref{split spec} and Lemma~\ref{lit change} (if necessary) on the sub-bundles $E_i$. We stress that the proofs of Lemmas~\ref{split spec} and~\ref{lit change} allow us to perturb the original cocycle on a set of small $\mu$-measure of our choice, and can be taken to each of the blocks $A_i$ separately, without influencing the other blocks. The procedure is to look if $\dim(E_1(x))\geq2$ and, in this case, apply Lemma~\ref{split spec} to split this sub-bundle by a perturbation $B_1'$ of $A_1$ with at least to different Lyapunov exponents and, if necessary, combine it with Lemma~\ref{lit change} to get $B_1\in\sic$, with $d_p(A,B_1)<\epsilon/d$ with at least $r+1$ distinct Lyapunov exponents in its spectrum. We continue this procedure and after at most $d-1$ steps we
obtain $B\in\sic$ with $d_p(A,B)<\epsilon$ and with simple spectrum.\end{proof}

\section{The continuous-time case}\label{cc}

\subsection{Definitions and statement of the results}\label{cont results}

\subsubsection{Linear differential systems and Lyapunov exponents}\label{LDS}

Let $X$ be a compact Hausdorff space, $\mu$ a Borel regular
measure and $\varphi^{t}:X\rightarrow{X}$ a one-parameter family
of continuous maps for which $\mu$ is $\varphi^{t}$-invariant.
A cocycle based on $\varphi^{t}$ is defined by a flow
$\Phi^{t}(x)$ differentiable on the time parameter
$t\in{\mathbb{R}}$, measurable on space-parameter $x\in{X}$, and
acting on $\GL(d,\mathbb{R})$.  Together they form the linear
skew-product flow:

$$
\begin{array}{cccc}
\Upsilon^{t}: & X\times{\mathbb{R}^{d}} & \longrightarrow & X\times{\mathbb{R}^{d}} \\
& (x,v) & \longmapsto & (\varphi^{t}(x),\Phi^{t}(x){v})
\end{array}
$$

The flow $\Phi^{t}$ satisfies the so-called \emph{cocycle identity}: $\Phi^{t+s}(x)=\Phi^{s}(\varphi^{t}(x)){\Phi^{t}(x)}$,
for all $t,s\in{\mathbb{R}}$ and $x\in{X}$. If we define a map $A\colon X\rightarrow{{\mathfrak {gl}}(d,\mathbb{R})}$ in
a point $x\in{X}$ by:
$$A(x)=\frac{d}{ds}\Phi^{s}(x)|_{s=0}$$
and along the orbit $\varphi^{t}(x)$ by:
\begin{equation}\label{lvi}
A(\varphi^{t}(x))=\frac{d}{ds}\Phi^{s}(x)|_{s=t} {[\Phi^{t}(x)]^{-1}},
\end{equation}
then $\Phi^{t}(x)$ will be the solution of the linear variational equation (or equation of first variations):
\begin{equation}\label{lve}
\frac{d}{ds}{u(x,s)|_{s=t}}=A(\varphi^{t}(x)) u(x,t),
\end{equation}
and $\Phi^{t}(x)$ is also called the \emph{fundamental matrix} or the \emph{matriciant}
of the system (\ref{lve}). Given a
cocycle $\Phi^{t}$ we can induce the associated \emph{infinitesimal generator} $A$ by
using~\eqref{lvi} and given $A$ we can recover the cocycle by
solving the linear variational equation~\eqref{lve}, from which we get
$\Phi_{A}^{t}$. In view of this, sometimes we refer for $A$ as a \emph{linear differential system}. Moreover, if  in addition, $A$ is continuous with respect to the space variable $x$, we call $A$ a \emph{continuous linear differential system}.

Several type of linear differential system are of interest, the ones with
invertible matriciants, for all $x\in X$ and $t\in \mathbb{R}$, denoted by $\mathfrak{gl}(d,\mathbb{R})$, the \emph{traceless} ones with volume-preserving matriciant, for all $x\in X$ and $t\in \mathbb{R}$, which we denote by
$\mathfrak{sl}(d,\mathbb{R})$, and also the systems with matriciant evolving in the symplectic group $\Sp(2d,\mathbb{R})$, denoted by
$\mathfrak{sp}(2d,\mathbb{R})$.

\medskip

\begin{example}
An illustrative example is the linear differential system associated to flows $X^t$ with $\|X(x)\|\not=0$, where $X(x)=\frac{d}{dt}X^t(x)|_{t=0}$ and $x\in X$.  In this case we have $\Phi^{t}(x)\in{\GL(d,\mathbb{R})}$, and so the infinitesimal generator, given by relation (\ref{lvi}),
 belongs to $\mathfrak{gl}(d,\mathbb{R})$. Another example is the linear differential system associated to incompressible flows $X^t$ where $\|X(x)\|=1$ for any $x\in X$.  In this case we have $\Phi^{t}(x)\in{\SL(d,\mathbb{R})}$, and so the infinitesimal generator belongs to $\mathfrak{sl}(d,\mathbb{R})$.
\end{example}

\medskip

Consider the subset $\mathscr{G}_{\!I\!C}$ of maps $A\colon X\rightarrow\mathfrak{gl}(d,\mathbb{R})$
belonging to $L^1(\mu)$ that is: $$\int_X \|A(x)\|\,d\mu<\infty.$$
For such infinitesimal generators there is a unique, up to indistinguishability, linear differential system $\Phi_A^t$ satisfying, for $\mu$-a.e. $x$,
\begin{equation}\label{solu}\Phi_A^t(x)=\textrm{Id}+\int_0^t A(\varphi^s(x))\Phi_A^s(x)\,ds.\end{equation}
In this conditions, the time-one solution satisfies
the  \emph{integrability condition}
\begin{equation*}\label{IC}
\int_X\log^+\|\Phi_A^{\pm1}(x)\|\,d\mu<\infty,
\end{equation*}
and,  consequently, Oseledets' theorem guarantees that for $\mu$-a.e.  $x\in X$, there exists a $\Phi_{A}^{t}$-invariant splitting called \emph{Oseledets' splitting} of the fiber $\mathbb{R}^{d}_{x}=E^{1}(x)\oplus \ldots \oplus E^{k(x)}(x)$ and real numbers called \emph{Lyapunov exponents} $\tilde{\lambda}_{1}(x)>\ldots>\tilde{\lambda}_{k(x)}(x)$, with $k(x)\leq d$, such that:
\begin{equation*}\label{limit}
\underset{t\rightarrow{\pm{\infty}}}{\lim}\frac{1}{t}\log{\|\Phi_{A}^{t}(x) v^{i}\|={\tilde\lambda}_{i}(x)},
\end{equation*}
for any $v^{i}\in{E^{i}(x)\setminus\{\vec{0}\}}$ and $i=1,\ldots,k(x)$. If we do not count the multiplicities, then we have $\lambda_{1}(x)\geq \lambda_{2}(x)\geq\ldots\geq\lambda_{d}(x)$. Moreover, given any of these subspaces $E^{i}$ and $E^{j}$, the angle between them along the orbit has subexponential growth, meaning that
\begin{equation*}\label{angle}
\lim_{t\rightarrow{\pm{\infty}}}\frac{1}{t}\log\sin(\measuredangle(E^{i}({\varphi^{t}(x)}),E^{j}({\varphi^{t}(x)})))=0.
\end{equation*}
If the flow $\varphi^{t}$ is ergodic, then the Lyapunov exponents and the dimensions of the associated subbundles
are $\mu$-a.e. constant. For this results on linear differential systems see \cite{A} (in particular, Example 3.4.15). See also ~\cite{JPS}.

As before, we say that $A\in\mathscr{G}_{\!I\!C}$ has \emph{one-point (Lyapunov) spectrum} if all Lyapunov
exponents are equal. If, moreover, the linear differential system $A$ takes values in
$\mathfrak{sl}(d,\mathbb R)$, then $A$ has one-point spectrum if and only if all
Lyapunov exponents are zero. On the other hand,  we say that $A\in\mathscr{G}_{\!I\!C}$ has \emph{simple (Lyapunov) spectrum} if all Lyapunov
exponents are different.

\subsubsection{Topologies on linear differential systems}\label{top}

Consider the set $\mathscr{G}$ of the measurable maps
$A:X\to \mathfrak{gl}(d,\mathbb R)$, $d \geq2$, endowed with its Borel
$\sigma$-algebra. For $A,B\in\mathscr{G}$ and $1\leq p\leq\infty$ set
\begin{equation*}
\|A\|_p:=\left\{\begin{array}{lll}
                 \Big(\displaystyle\int_X \|A(x)\|^p d\mu\Big)^{1/p}, & &\textrm{if}\, 1\leq p < \infty \\
                 \displaystyle\esssup_{x\in X}\|A(x)\|, & & \textrm{if}\, p=\infty  \\
               \end{array}
\right..
\end{equation*}
and
\begin{equation}\label{metric} d_p(A,B)=\frac{\|A-B\|_p}{1+\|A-B\|_p},\end{equation}
where $d_p(A,B)=1$ if $\|A-B\|_p=\infty$. Note that $A(x)\in \mathfrak{gl}(d,\mathbb R)$ do not need to be invertible.

As in the discrete-time setting, the equality \eqref{metric} defines a metric on the space of infinitesimal generators, which is complete with respect to this metric. We refer for the metric/norm/topology induced by \eqref{metric} has the \emph{$L^p$ infinitesimal generator metric/norm/topology}.

\begin{remark}\label{metric rel2}
It follows from the definition of the metric and from H\"older inequality that, for all $A, B\in\mathscr{G}$ and $1\leq p\leq q\leq\infty$, we have $d_p(A,B)\leq d_q(A,B)$.
\end{remark}

\begin{remark}\label{small dist implies ic}
If $A\in\mathscr{G}_{\!I\!C}$ and $B\in\mathscr{G}$ with $d_p(A,B) < 1$, $1\leq p\leq\infty$, then $B\in\mathscr{G}_{\!I\!C}$; see~\cite{AC0}.
\end{remark}

\subsubsection{Families of linear differential systems}

Like we did in the discrete case we are interested in elements $A$ taking values in specific subgroups of $\mathfrak{gl}(d,\mathbb{R})$.
In the greater generality we consider subgroups that satisfy an accessibility condition:

\begin{definition}
We call a non-empty closed subalgebra $\mathscr{T}\subset \mathfrak{gl}(d,\mathbb R)$ \textbf{accessible} if its associated Lie subgroup acts transitively in the projective space $\mathbb RP^{d-1}$.
\end{definition}

\begin{example}
The subalgebras $\mathfrak{gl}(d,\mathbb R)$, $\mathfrak{sl}(d,\mathbb R)$, $\mathfrak{sp}(2d,\mathbb R)$ are accessible.
\end{example}

\begin{lemma}\label{rot2}
Let $\mathscr{T}$ be an accessible subalgebra of $\mathfrak{gl}(d,\mathbb R)$. Then,
there exists $K>0$ such that for all $u,v\in\mathbb RP^{d-1}$
there is $\{\mathfrak{R}_{u,v}(t)\}_{t\in[0,1]}\in \mathscr{T}$, with $\|\mathfrak{R}_{u,v}(t)\|\leq K$ such
that $\Phi_{\mathfrak{R}_{u,v}}^1 u=v$, where $\Phi^t_{\mathfrak{R}_{u,v}}$ is the solution of the linear variational equation $\dot{u}(t)=\mathfrak{R}_{u,v}(t)\cdot u(t)$.
\end{lemma}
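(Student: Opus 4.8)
The plan is to reduce the continuous-time statement to the discrete-time Lemma~\ref{rot} already proved. The point is that accessibility of the subalgebra $\mathscr{T}$ means, by definition, that its associated Lie subgroup $\mathcal{S}\subset\GL(d,\mathbb{R})$ acts transitively on $\mathbb{RP}^{d-1}$, i.e. $\mathcal{S}$ is an accessible subgroup in the sense of the earlier definition. Thus Lemma~\ref{rot} applies to $\mathcal{S}$ and yields a uniform constant $K_0>0$ together with, for each pair $u,v\in\mathbb{RP}^{d-1}$, an element $R_{u,v}\in\mathcal{S}$ with $\|R_{u,v}^{\pm1}\|\le K_0$ and $R_{u,v}u=v$. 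The task is then to realize $R_{u,v}$ as the time-one map $\Phi^1_{\mathfrak{R}_{u,v}}$ of a linear variational equation $\dot u(t)=\mathfrak{R}_{u,v}(t)u(t)$ with $\mathfrak{R}_{u,v}(t)\in\mathscr{T}$ and $\|\mathfrak{R}_{u,v}(t)\|$ uniformly bounded in $t$ and in the pair $(u,v)$.

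First I would choose, for each pair $(u,v)$, a path in $\mathcal{S}$ from the identity to $R_{u,v}$. The natural candidate is a geodesic-type or one-parameter-subgroup construction: since $R_{u,v}$ lies in the connected Lie subgroup generated by $\mathscr{T}$ and has norm controlled by $K_0$, one can write $R_{u,v}=\exp(\xi_{u,v})$ with $\xi_{u,v}\in\mathscr{T}$, at least when $R_{u,v}$ is close enough to the identity; in general one concatenates finitely many such exponentials. Concretely, revisiting the proof of Lemma~\ref{rot}, the element $R_{u,v}$ there is already produced as a product $R_vR_k\cdots R_1R_u^{-1}$ of a uniformly bounded number $m+2$ of group elements each of small norm, hence each of the form $\exp(\xi_j)$ with $\|\xi_j\|$ small. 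Setting $\mathfrak{R}_{u,v}(t)$ to be, on the $j$-th subinterval of a partition of $[0,1]$ into $m+2$ equal pieces, the constant value $(m+2)\xi_j$ (with the appropriate left-translation so the pieces compose correctly), gives a measurable — indeed piecewise-constant — family in $\mathscr{T}$ whose time-one solution is exactly $R_{u,v}$, and whose norm is bounded by $(m+2)\sup_j\|\xi_j\|=:K$, uniformly over all $(u,v)$ because $m$ is fixed and the $\xi_j$ are uniformly small by construction.

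I would then verify the two required properties: (i) $\mathfrak{R}_{u,v}(t)\in\mathscr{T}$ for all $t$, which holds because each $\xi_j$ is chosen in $\mathscr{T}$ and $\mathscr{T}$ is a linear subspace closed under scaling; and (ii) $\Phi^1_{\mathfrak{R}_{u,v}}=R_{u,v}$, which follows from the multiplicativity $\Phi^1=\Phi^{1/(m+2)}_{(m+2)}\circ\cdots$ of solutions of piecewise-autonomous linear ODEs together with $\exp(\xi_j)=$ the time-one flow of the constant system $\dot u=(m+2)\xi_j u$ run for time $1/(m+2)$. The uniform bound $\|\mathfrak{R}_{u,v}(t)\|\le K$ is immediate from the construction. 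Finally one notes $\Phi^1_{\mathfrak{R}_{u,v}}u=R_{u,v}u=v$, which is the claimed conclusion.

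The main obstacle I anticipate is the uniformity of $K$ across \emph{all} pairs $(u,v)$: a priori the word length needed to join $u$ to $v$ inside $\mathcal{S}$ could blow up, but this is precisely what the compactness-of-$\mathbb{RP}^{d-1}$ argument in Lemma~\ref{rot} rules out, fixing the number of factors at $m+2$ and each factor's size below $\epsilon$. A secondary technical point is ensuring the exponential coordinates are valid — i.e. that each small group element genuinely lies in $\exp(\mathscr{T})$ rather than merely in the subgroup generated by it; this is handled by shrinking the $\epsilon$ in Lemma~\ref{rot} so that every factor lies in a coordinate neighborhood of the identity on which $\exp|_{\mathscr{T}}$ is a diffeomorphism onto its image, at the cost of enlarging $m$, which is harmless since $m$ remains finite and fixed.
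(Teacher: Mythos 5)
Your argument is correct and takes essentially the same route as the paper: apply Lemma~\ref{rot} to the Lie subgroup associated with $\mathscr{T}$ to obtain $R_{u,v}$ with uniformly bounded norm, and then realize $R_{u,v}$ as the time-one solution of $\dot{u}(t)=\mathfrak{R}_{u,v}(t)u(t)$ by taking the logarithmic derivative of a path in the group joining the identity to $R_{u,v}$. The only difference is the choice of path: the paper uses a smooth isotopy $\zeta(t)$ (so the generator $\dot\zeta\,\zeta^{-1}$ is smooth and constant near $t=0,1$, which is convenient for the later flowbox constructions), whereas your concatenation of one-parameter subgroups gives a piecewise-constant generator, which is equally sufficient for the statement as written and in fact makes the uniformity of $K$ over all pairs $(u,v)$ more explicit than the paper's brief ``clearly bounded''.
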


\begin{proof}
The proof is analog to the one in Lemma~\ref{rot}. In order to comply the continuous-time formalization we just have to consider a smooth isotopy on $\mathscr{T}$ from the identity to the rotation $R_{u,v}$ (which sends the direction $u$ into the direction $v$) given by $\zeta(t)$, with $\zeta(t)=\textrm{Id}$ for $t\leq 0$ and $\zeta(t)=R_{u,v}$ for $t\geq 1$. We consider the linear variational equation
$$\dot{u}(t)=\left[\frac{d}{dt}\zeta(t)\cdot \zeta(t)^{-1}\right]\cdot u(t)$$ with initial condition $u(0)=\textrm{Id}$ and unique solution equal to $\zeta(t)$. Define $\mathfrak{R}_{u,v}(t)=\frac{d}{dt}\zeta(t)\cdot \zeta(t)^{-1}$. Clearly, $\mathfrak{R}_{u,v}(t)$ is bounded. Moreover, the solution of $\dot{u}(t)=\mathfrak{R}_{u,v}(t)\cdot u(t)$ defined by $\Phi^t_{\mathfrak{R}_{u,v}}$ is, such that,
$$\Phi^1_{\mathfrak{R}_{u,v}} u=\zeta(1) u=v.$$
\end{proof}

\begin{definition}
We say that  a closed Lie subalgebra  $\mathscr{S}\subseteq\mathfrak{gl}(d,\mathbb R)$ is \textbf{saddle-conservative} if its associated Lie subgroup is saddle-conservative in the sense of Definition~\ref{SC}.
\end{definition}

\begin{example}
Analogous to the discrete-time case we have that the Lie algebras $\mathfrak{gl}(d,\mathbb R)$, $\mathfrak{sl}(d,\mathbb R)$, $\mathfrak{sp}(2q,\mathbb R)$ display the saddle-conservative property. The orthogonal Lie algebra and the special orthogonal Lie algebra do not display the saddle-conservative property.
\end{example}

\medskip

Denote by $\mathscr{T}_{\!I\!C}\subset \mathscr{G}_{\!I\!C}$ the maps $A\colon X\rightarrow \mathscr{T}\subset\mathfrak{gl}(d,\mathbb R)$ where $\mathscr{T}$ is an accessible subalgebra. Denote by $\mathscr{S}_{\!I\!C}\subset\mathscr{T}_{\!I\!C}$ the maps $A\colon X\rightarrow \mathscr{S}\subset\mathscr{T}$ where $\mathscr{S}$ is a saddle-conservative accessible subalgebra.

\subsubsection{Conservative perturbations}

Considering the same notation as before we recall the Os\-tro\-grad\-sky-Jacobi-Liouville formula:
\begin{equation}\label{OJL}
\exp\left({\int_{0}^{t}\text{Tr}\,A(\varphi^{s}(x))\,ds}\right)=\det \Phi^{t}_A(x),
\end{equation}
where $\text{Tr}(A)$ denotes the trace of the matrix $A$.

Therefore, we may speak about conservative perturbations of systems
$A$ evolving in $\mathfrak{gl}(d,\mathbb{R})$ along the orbit
$\varphi^{t}(x)$ as $A+H$ where
$H(\varphi^{t}(x))\in \mathfrak{sl}(d,\mathbb{R})$. Denote by $\Phi_A^t$ the solution of (\ref{lve}) and by $\Phi_{A+H}^t$ the solution of the perturbed system:
\begin{equation*}\label{lve3}
\frac{d}{ds}{u(x,s)|_{s=t}}=[A(\varphi^{t}(x))+H(\varphi^{t}(x)]\cdot u(x,t),
\end{equation*}
By a direct application of formula (\ref{OJL}) we obtain
 \begin{eqnarray*}
 \det(\Phi^t_{A+H}(x))&=&exp\left({\int_{0}^{t}\text{Tr}A(\varphi^{s}(x))+\text{Tr}H(\varphi^{s}(x))\,ds}\right)\\
 &=&exp\left({\int_{0}^{t}\text{Tr}A(\varphi^{s}(x))\,ds}\right)\\
 &=&\det(\Phi^{t}_A(x)),
 \end{eqnarray*}
which allows us to conclude that the perturbation leaves the volume form invariant.

\subsubsection{Statement of the results}

We intend to obtain the continuous-time version of the discrete results treated in the first part of this paper. We start by establishing the existence of a $L^p$-residual of the accessible linear differential systems with one-point spectrum:
\begin{maintheorem}\label{ops2}
There exists an $L^p$-residual subset $\mathcal R\in \mathscr{T}_{\!I\!C}$,  $1\leq p < \infty$ such that, for any $B\in \mathcal{R}$ we have that $B$ has one-point spectrum.
 \end{maintheorem}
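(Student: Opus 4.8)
The plan is to mirror the discrete-time argument carried out in \S\ref{proof ops}, replacing the cocycle $A^n$ by the matriciant $\Phi_A^t$ and the abstract perturbation of Lemma~\ref{rot3} by its continuous-time counterpart built from Lemma~\ref{rot2}. First I would recall that, by the result of Arbieto and Bochi adapted to infinitesimal generators (the analogue of \S\ref{ArBo}), the functions $A\mapsto \Lambda_k(A):=\int_X \hat\lambda_k(A,x)\,d\mu$ are upper semicontinuous on $\mathscr{T}_{\!I\!C}$ with respect to the $L^p$ infinitesimal generator topology, for $k=1,\dots,d$, and $\Lambda_d$ is continuous since $\det\Phi_A^t$ is controlled by $\int_0^t\mathrm{Tr}\,A$ via the Ostrogradsky-Jacobi-Liouville formula~\eqref{OJL}. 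Since the set of continuity points of an upper semicontinuous function is residual, it suffices to show that every $A\in\mathscr{T}_{\!I\!C}$ can be $L^p$-approximated by generators with strictly smaller $\Lambda_k$ whenever the $k$-th discontinuity jump $J_k(A)=\int_X\frac{\tilde\lambda_k(A,x)-\tilde\lambda_{k+1}(A,x)}{2}\,d\mu$ is positive; then at a joint continuity point of all $\Lambda_k$ we get $J_k\equiv0$ for all $k$, hence one-point spectrum a.e.

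The heart of the matter is therefore the continuous-time version of Proposition~\ref{P2}: given $A\in\mathscr{T}_{\!I\!C}$, $\epsilon,\delta>0$ and $k\in\{1,\dots,d-1\}$, find $B\in\mathscr{T}_{\!I\!C}$ with $d_p(A,B)<\epsilon$ and $\Lambda_k(B)<\delta+\tfrac{\Lambda_{k-1}(A)+\Lambda_{k+1}(A)}{2}$. Following the route through Propositions~\ref{P1}--\ref{P3}, I would first establish the pointwise statement analogous to Proposition~\ref{P1}: for $\mu$-a.e. $x$ and all large $t$, one can insert a bounded perturbation of $A$ supported in a tiny time window near $\varphi^{t/2}(x)$ — produced by the isotopy $\mathfrak R_{u,v}(\cdot)$ of Lemma~\ref{rot2} sending the top Oseledets $k$-plane into (the image under $\Phi_A$ of) a suitable complementary direction — so that $\frac1t\log\|\wedge^k(\text{perturbed matriciant over }[0,t])\|\le \delta+\tfrac{\hat\lambda_{k-1}(A,x)+\hat\lambda_{k+1}(A,x)}{2}$. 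Here Lemma~\ref{arnauld} (Oseledets for exterior powers) is used exactly as in the discrete case to turn the control of the $k$-th partial sum of exponents into control of the top exponent of $\wedge^k$. Then I would globalize as in Proposition~\ref{P2}: cover a full-measure set by Rokhlin-type towers for the flow, perform the localized perturbations simultaneously on the towers at the appropriate heights, check that since $B=A$ outside a set of arbitrarily small $\mu$-measure (and the perturbation enters only through an $L^1$/$L^p$ integral of $\|A-B\|$, which is small even though $\|A(x)-B(x)\|$ need not be) we have $d_p(A,B)<\epsilon$, and finally integrate the pointwise estimate using the dominated/Fatou-type arguments of \cite[\S4.3]{BV2} together with the upper semicontinuity already in hand to conclude $\Lambda_k(B)<\delta+\tfrac{\Lambda_{k-1}(A)+\Lambda_{k+1}(A)}{2}$, which rewrites as $\Lambda_k(B)<\delta-J_k(A)+\Lambda_k(A)$.

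The main obstacle, and the one deserving the most care, is the continuous-time bookkeeping in Proposition~\ref{P1}: unlike the discrete case where a single matrix $B(T^{n/2}(x))$ is inserted, here the perturbation must be realized as a genuine modification $A+H$ of the differential equation on a short time interval, with $H(\varphi^s(x))\in\mathscr T$ bounded uniformly (this boundedness is precisely what Lemma~\ref{rot2} guarantees, via $\|\mathfrak R_{u,v}(t)\|\le K$), and one must verify that concatenating the unperturbed flow before and after this window still yields a matriciant whose $k$-th exterior norm obeys the desired bound — this requires tracking how the inserted rotation interacts with the Oseledets filtration at the point $\varphi^{t/2}(x)$ and controlling the (sub-exponential) angle terms from the MET. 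A secondary technical point is ensuring measurability of the function $N\colon X\to\mathbb N$ (the "waiting time" after which Oseledets estimates are valid up to error $\delta$) and that the towers can be chosen compatibly with it; both are handled exactly as in \cite{BV2}, now over the flow $\varphi^t$, and I would simply note that the arguments transcribe with the obvious modifications. Everything else — the passage from Proposition~\ref{P2} to Proposition~\ref{P3}, and from there to the residual set — is formally identical to the discrete proof of Theorem~\ref{ops} and can be stated briefly.
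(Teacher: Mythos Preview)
Your overall architecture is exactly the paper's: upper semicontinuity of $\Lambda_k$ (Proposition~\ref{upper sc}), a flow analogue of Propositions~\ref{P1}--\ref{P3}, and the residual-from-continuity-points conclusion. There is, however, a genuine gap in the perturbation step, and it propagates to a missing reduction that the paper does carry out.

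You write that the perturbation $H$ is ``produced by the isotopy $\mathfrak R_{u,v}(\cdot)$ of Lemma~\ref{rot2}'' and that ``this boundedness is precisely what Lemma~\ref{rot2} guarantees, via $\|\mathfrak R_{u,v}(t)\|\le K$''. This is not correct: Lemma~\ref{rot2} only tells you that the flow of $\mathfrak R_{u,v}$ \emph{alone} sends $u$ to $v$. If you set $H=\mathfrak R_{u,v}$ and solve $\dot u=(A+H)u$, the matriciant $\Phi_{A+H}^1$ bears no simple relation to $\Phi_A^1$ composed with a rotation, because $A$ and $\mathfrak R_{u,v}$ do not commute. To obtain $\Phi_{A+H}^1(y)u_y\in\Phi_A^1(y)\,\mathbb R v_y$ one must take $\Psi^t=\Phi_A^t\cdot\mathfrak R^t$ and differentiate, which yields (cf.\ Lemma~\ref{rot3cont})
\[
H(\varphi^t(y))=\tfrac{\dot\zeta}{\zeta}\,\textrm{Id}+\Phi_A^t(y)\,\dot{\mathcal R}^t(\mathcal R^t)^{-1}\,(\Phi_A^t(y))^{-1}.
\]
The pointwise size of $H$ therefore involves $\|(\Phi_A^t(y))^{\pm1}\|$, not merely the constant $K$ from Lemma~\ref{rot2}. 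For a general $A\in\mathscr T_{\!I\!C}$ these norms need not be bounded on the flowbox, so your claim that $H$ is ``bounded uniformly'' fails, and with it the $L^p$-smallness of the perturbation.

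This is exactly why the paper's proof of Theorem~\ref{ops2} inserts a preliminary step you omit: by Lusin's theorem one first approximates $A$ in $L^p$ by a \emph{continuous} linear differential system, for which $\max_{z\in X,\,t\in[0,1]}\|(\Phi_A^t(z))^{\pm1}\|<\infty$ is available and Lemma~\ref{rot3cont} applies; one then runs the Bochi--Viana scheme on the continuous approximants $A_n$ and passes the conclusion back to $A$ using that $A$ is a continuity point of the $\Lambda_k$. Once you replace your $H=\mathfrak R_{u,v}$ by the conjugated generator above and add the Lusin reduction, the rest of your outline goes through as written.
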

However, there are no $L^p$-open subsets of the saddle-conservative accessible linear differential systems, since the simple spectrum is a dense property:
\begin{maintheorem}\label{simple2}
For any $A\in \mathscr{S}_{\!I\!C}$,  $1\leq p < \infty$ over an ergodic flow and $\epsilon>0$, there exists $B\in \mathscr{S}_{\!I\!C}$, with $d_p(A,B)<\epsilon$ and $B$ has simple Lyapunov spectrum.
 \end{maintheorem}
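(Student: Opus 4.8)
\textbf{Proof proposal for Theorem~\ref{simple2}.}
The plan is to transfer the discrete-time argument of Theorem~\ref{simple} to the continuous-time setting by working with the time-one solution $\Phi_A^1$. The first step is to establish the continuous-time analogue of Lemma~\ref{split spec}: given $A\in\mathscr{S}_{\!I\!C}$ with one-point spectrum, produce $B\in\mathscr{S}_{\!I\!C}$ with $d_p(A,B)<\epsilon$ having at least two distinct Lyapunov exponents. As in the discrete case, I would fix $M>1$ and a Borel set $V\subset X$ of small positive measure with $V\cap\varphi^1(V)=\emptyset$ and $\sup_{x\in V\cup\varphi^1(V)}\log^+\|\Phi_A^{\pm1}(x)\|\leq M$, then use Lemma~\ref{rot2} to choose, for $x\in V$, a bounded time-dependent generator $\mathfrak{R}(t)\in\mathscr{T}$ supported (as an isotopy correction) on the time-one window over $x$, so that the perturbed matriciant sends the chosen direction $v(x)$ into the fixed unit direction $e$; this realigns the sub-bundle $E(x)=\mathrm{span}\{v(x)\}$ into a coherent $\Phi_B^1$-invariant line field exactly as in Lemma~\ref{split spec}. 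The key subtlety is that perturbations here must be performed on the \emph{infinitesimal generator} $A$, not on the matriciant directly; I would absorb the (possibly large in uniform norm) correction into a small neighbourhood of the relevant piece of orbit, so that the $L^p$ infinitesimal-generator distance $d_p(A,B)$ stays bounded by a constant times $\mu(V)^{1/p}$ (using Remark~\ref{metric rel2} and Remark~\ref{small dist implies ic} to keep $B\in\mathscr{S}_{\!I\!C}$).

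Next I would replicate the expansion step. Using the saddle-conservative hypothesis on $\mathscr{S}$ (via its associated Lie subgroup), I choose, for $\delta\in(0,1)$, an element realizing $e\mapsto(1+\delta)e$ inside $\SL(d,\mathbb{R})$, and realize it as the time-one solution of a bounded traceless generator $H$ supported over $\varphi^1(V)$; by the Ostrogradsky--Jacobi--Liouville formula~\eqref{OJL} the composed system $D$ with generator adjusted by $H$ on that window satisfies $\det\Phi_D^t=\det\Phi_A^t$, so the sum of the Lyapunov exponents (counted with multiplicity) is unchanged. Since $E$ is $\Phi_D^1$-invariant, Birkhoff's ergodic theorem gives that $D$ has a Lyapunov exponent equal to $\lambda_A+\log(1+\delta)\mu(V)$ along $E$, forcing at least two distinct exponents; and the same distance estimate $d_p(A,D)\leq \mathrm{const}\cdot\mu(V)^{1/p}$ holds, so choosing $\mu(V)$ small yields $B=D$ with $d_p(A,B)<\epsilon$. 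This completes the continuous-time version of Lemma~\ref{split spec}. One then needs the continuous-time analogue of Lemma~\ref{lit change}, which is obtained the same way by turning on a small traceless expansion $\epsilon_1\log(1+\delta)$ uniformly (supported on a prescribed set $U$ of positive measure), shifting the whole spectrum rigidly without merging exponents.

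Finally I would run the block induction exactly as in the proof of Theorem~\ref{simple}: let $\{E_1(x),\ldots,E_r(x)\}$ be the Oseledets splitting of $\Phi_A^t$ and decompose the generator accordingly. The perturbations in the two lemmas are localized on sets of arbitrarily small measure and act on a single block without disturbing the others, so I apply the continuous-time Lemma~\ref{split spec} to any block of dimension $\geq2$, correcting if necessary with the continuous-time Lemma~\ref{lit change} to keep all exponents distinct, each time spending at most $\epsilon/d$ of the budget. After at most $d-1$ steps we reach $B\in\mathscr{S}_{\!I\!C}$ with $d_p(A,B)<\epsilon$ and simple spectrum, and ergodicity of $\varphi^t$ guarantees the exponents are $\mu$-a.e.\ constant so that ``simple spectrum'' is well posed.

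The main obstacle I anticipate is the bookkeeping for ``perturbing the generator rather than the matriciant'': one must verify carefully that an isotopy correction realizing a prescribed linear map at time one, confined to a short time window over a small-measure set, genuinely produces an $L^p$-small change of $A$ in the infinitesimal-generator norm while keeping $B$ inside the prescribed subalgebra $\mathscr{S}$ (closedness of $\mathscr{S}$ and boundedness of $\mathfrak{R}(t)$ from Lemma~\ref{rot2} are the tools), and that the resulting matriciant still satisfies the integrability condition~\eqref{IC}. Once this technical point is handled, the rest is a transcription of the discrete argument via~\eqref{OJL} and Birkhoff's theorem.
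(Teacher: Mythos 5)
Your proposal is correct and follows essentially the paper's own route: the paper likewise proves a flow version of Lemma~\ref{split spec} (Lemma~\ref{split spec flow}) by realigning the line field $E(x)$ over a thin flowbox via an accessibility perturbation, then inserting a traceless expansion supported over $\varphi^1(V)$ so that \eqref{OJL} preserves the determinant while Birkhoff's theorem shifts one exponent by $\log(1+\delta)\mu(V)$, and finally runs the block induction with Lemma~\ref{lit change}. The technical point you flag --- realizing a prescribed time-one map by an $L^p$-small traceless perturbation of the \emph{generator} confined to a flowbox --- is exactly what the paper's Lemmas~\ref{rot3cont} and~\ref{rot4} (together with Remark~\ref{rot33cont}, and a preliminary $L^p$-approximation of $A$ by a continuous system) supply, so your sketch is essentially a transcription of the paper's proof.
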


\subsection{The Arbieto and Bochi theorem for linear differential systems}

Let us consider the following function where $\mathscr{L}$ is one of the subsets of linear differential systems $\mathscr{T}_{\!I\!C}$, $\mathscr{S}_{\!I\!C}$ or $\mathscr{G}_{\!I\!C}$:
\begin{equation*}\label{entropy}
\begin{array}{cccc}
\Lambda_{k}\colon &\mathscr{L} & \longrightarrow & [0,\infty) \\
& A & \longmapsto & \int_{X}\lambda_{1}(\wedge^{k}(A),x)\, d\mu.
\end{array}
\end{equation*}
With this function we compute the integrated \emph{largest} Lyapunov exponent of the $k^{th}$ exterior power operator.
Let us denote $\hat\lambda_{k}(A,x)=\lambda_{1}(A,x)+\ldots+\lambda_{k}(A,x)$. By using Lemma~\ref{arnauld} we conclude that  for $k=1,\ldots,d-1$ we have $\hat\lambda_{k}(A,x)=\lambda_{1}(\wedge^{k}(A),x)$ and therefore we obtain  $\Lambda_{k}(A)=\Lambda_{1}(\wedge^{k}(A))$.

In order to prove  that $\Lambda_k$ is an upper semicontinuous function if we endow  $ \mathscr{L}$ with the $L^p$ infinitesimal generator topology (Proposition \ref{upper sc}),
we give a preliminary result which allows us to control different solutions taking into account the closeness of the respective infinitesimal generators.

In what follows we use the same notation for the $L^1$-norm of the infinitesimal generators introduced in \S\ref{top} and for the usual $L^1$-norm $\|f\|_1$ of functions $f:X\to\mathbb R$,
given by $\int_X |f(x)|\,d\mu$.

\begin{lemma}\label{cont solu}
For $A,B\in\mathscr{G}_{\!I\!C}$ we have
$$\left\|\log^+\|\Phi_A^t(x)\|-\log^+\|\Phi_B^t(x)\|\right\|_1\leq t\|A-B\|_1,\,\,\text{for all}\,\,\, t\in\mathbb R^+.$$
\end{lemma}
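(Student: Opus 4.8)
The strategy is to compare the two integral equations \eqref{solu} satisfied by $\Phi_A^t$ and $\Phi_B^t$ and to apply a Gronwall-type estimate, but worked out at the level of the $L^1$-norm over $X$ rather than pointwise. First I would write, for $\mu$-a.e.\ $x$,
$$\Phi_A^t(x)-\Phi_B^t(x)=\int_0^t\Big[A(\varphi^s(x))\Phi_A^s(x)-B(\varphi^s(x))\Phi_B^s(x)\Big]\,ds,$$
and split the bracket as $A(\varphi^s(x))\big(\Phi_A^s(x)-\Phi_B^s(x)\big)+\big(A(\varphi^s(x))-B(\varphi^s(x))\big)\Phi_B^s(x)$. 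Taking operator norms and using submultiplicativity gives a pointwise inequality for $\|\Phi_A^t(x)-\Phi_B^t(x)\|$ in terms of $\int_0^t\|A(\varphi^s(x))\|\,\|\Phi_A^s(x)-\Phi_B^s(x)\|\,ds$ and $\int_0^t\|A(\varphi^s(x))-B(\varphi^s(x))\|\,\|\Phi_B^s(x)\|\,ds$.

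The cleaner route, however, avoids Gronwall entirely by exploiting the elementary Lipschitz bound $\big|\log^+\|M\|-\log^+\|N\|\big|\le \|M-N\|/\max\{1,\min(\|M\|,\|N\|)\}\le\|M-N\|$ is too lossy; instead one should use that $\|\Phi_A^t\|,\|\Phi_B^t\|\ge$ something. A better tack: observe that $t\mapsto\|\Phi_A^t(x)\|$ and $t\mapsto\log\|\Phi_A^t(x)\|$ are absolutely continuous, with $\big|\tfrac{d}{dt}\log\|\Phi_A^t(x)\|\big|\le\|A(\varphi^t(x))\|$ for a.e.\ $t$ (this follows from differentiating \eqref{solu}, since $\tfrac{d}{dt}\Phi_A^t=A(\varphi^t)\Phi_A^t$ gives $\big|\tfrac{d}{dt}\|\Phi_A^t\|\big|\le\|A(\varphi^t)\|\,\|\Phi_A^t\|$). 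The same holds for $\log^+$ in the sense of one-sided derivatives. Then for fixed $x$ the quantity $g(t):=\log^+\|\Phi_A^t(x)\|-\log^+\|\Phi_B^t(x)\|$ satisfies $g(0)=0$ and one wants $|g(t)|\le\int_0^t\|A(\varphi^s(x))-B(\varphi^s(x))\|\,ds$. This is the crux: it requires controlling $\tfrac{d}{dt}\big(\log\|\Phi_A^t\|-\log\|\Phi_B^t\|\big)$, and the naive bound only gives $\|A(\varphi^t)\|+\|B(\varphi^t)\|$, not $\|A(\varphi^t)-B(\varphi^t)\|$. One closes this gap by writing $\tfrac{d}{dt}\log\|\Phi_A^t(x)\|=\big\langle A(\varphi^t(x))u_A(t),u_A(t)\big\rangle$ where $u_A(t)=\Phi_A^t(x)v/\|\Phi_A^t(x)v\|$ for the appropriate maximizing unit vector $v$ — more precisely using that $\|\Phi_A^t(x)\|^2=\langle\Phi_A^t(x)^*\Phi_A^t(x)v,v\rangle$ for the top eigenvector $v$ and Danskin's theorem — so the derivative of the difference, at a common time where one chooses the respective maximizing directions, can be compared term by term; the residual terms involve $\langle(A-B)(\varphi^t)u,u\rangle$ plus terms that are pointwise nonpositive or cancel. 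This is the main obstacle, and handling it rigorously needs a careful argument about one-sided derivatives of $\log^+$ of operator norms (or, alternatively, a direct Gronwall argument followed by a clever estimate).

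Granting the pointwise bound $\big|\log^+\|\Phi_A^t(x)\|-\log^+\|\Phi_B^t(x)\|\big|\le\int_0^t\|A(\varphi^s(x))-B(\varphi^s(x))\|\,ds$ for $\mu$-a.e.\ $x$, I would finish by integrating over $X$ and applying Fubini (legitimate since everything is nonnegative and measurable):
$$\left\|\log^+\|\Phi_A^t\|-\log^+\|\Phi_B^t\|\right\|_1\le\int_X\int_0^t\|A(\varphi^s(x))-B(\varphi^s(x))\|\,ds\,d\mu(x)=\int_0^t\int_X\|A(\varphi^s(x))-B(\varphi^s(x))\|\,d\mu(x)\,ds.$$
Now the $\varphi^s$-invariance of $\mu$ gives $\int_X\|A(\varphi^s(x))-B(\varphi^s(x))\|\,d\mu(x)=\int_X\|A(x)-B(x)\|\,d\mu(x)=\|A-B\|_1$ for every $s$, so the right-hand side equals $t\|A-B\|_1$, which is exactly the claim. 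I expect the only genuinely delicate point to be establishing the sharpened pointwise differential inequality; the integration and invariance steps are routine.
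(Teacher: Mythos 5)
Your second half --- integrating a pointwise estimate over $X$ via Tonelli--Fubini and using the $\varphi^s$-invariance of $\mu$ to obtain $t\|A-B\|_1$ --- coincides with the paper's proof and is fine. The genuine gap is the first half: you explicitly \emph{grant} the pointwise inequality $\bigl|\log^+\|\Phi_A^t(x)\|-\log^+\|\Phi_B^t(x)\|\bigr|\le\int_0^t\|A(\varphi^s(x))-B(\varphi^s(x))\|\,ds$, and the Danskin-type route you sketch cannot deliver it. After writing the derivative of $\log\|\Phi_A^t(x)v\|$ as $\langle A(\varphi^t(x))u_A(t),u_A(t)\rangle$ and subtracting the analogous expression for $B$, the leftover term $\langle Bu_A,u_A\rangle-\langle Bu_B,u_B\rangle$ is of size $\|B\|\,\|u_A-u_B\|$, and the two maximizing directions separate at a speed governed by $A$ and $B$ themselves, not by $A-B$; nothing makes this term nonpositive or cancel. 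In fact the pointwise bound is false in general: along an orbit let the generator of $B$ equal $\mathrm{diag}(\lambda,-\lambda)$ for one unit of time and $\mathrm{diag}(-\lambda,\lambda)$ for the next (so $\Phi_B^2=\mathrm{Id}$), and let $A=B$ on the first unit and $A=B+\epsilon N$ on the second, with $N$ feeding the first coordinate into the second; the $\epsilon$-coupling is amplified by the hyperbolicity, giving $\|\Phi_A^2\|\approx\tfrac{\epsilon}{2\lambda}e^{2\lambda}$, so the left-hand side is of order $2\lambda$ while $\int_0^2\|A-B\|\,ds=\epsilon$. Your first, abandoned route (Gronwall applied to $\Phi_A^t(\Phi_B^t)^{-1}$) shows where the obstruction sits: it only yields $\int_0^t\kappa(\Phi_B^s)\,\|A-B\|(\varphi^s(x))\,ds$ with $\kappa$ the condition number, and the example shows this loss cannot be removed by a pointwise argument.

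For comparison, the paper disposes of this step in one line: Gronwall applied to \eqref{solu} gives $\log^+\|\Phi_C^t(x)\|\le\int_0^t\|C(\varphi^s(x))\|\,ds$ for $C=A,B$, and it then passes ``consequently'' to $\bigl|\log^+\|\Phi_A^t(x)\|-\log^+\|\Phi_B^t(x)\|\bigr|\le\bigl|\int_0^t(\|A(\varphi^s(x))\|-\|B(\varphi^s(x))\|)\,ds\bigr|$. That inference draws a bound on a difference from separate one-sided upper bounds on the two terms, which is exactly the step you identified as the crux and could not justify; so your diagnosis of where the difficulty lies is accurate, but your proposal does not prove the lemma, and the paper's own shortcut does not close it either. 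Note, moreover, that the example above, run over a circle rotation with $A,B$ depending only on the base point, appears to contradict even the integrated statement at $t=2$, so a genuinely different ingredient (a smallness/boundedness regime, or control of the conjugating cocycle) seems to be needed rather than a sharper version of the estimate you attempted.
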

\begin{proof}
From \eqref{solu}, Gronwall's lemma (see, e.g., \cite{A}) implies that, with $C=A,B$, for  $\mu$-a.e. $x\in X$ and for all $t\in\mathbb R^+$ we have
$$\log^+\|\Phi_{C}^t(x)\|\leq\int_{0}^t\|C(\varphi^s(x))\|\,ds,$$
and, consequently,
\begin{align*}
\left|\log^+\|\Phi_A^t(x)\|-\log^+\|\Phi_B^t(x)\|\right|
&\leq \left|\int_0^t \|A(\varphi^s(x))\|-\|B(\varphi^s(x))\|\,ds\right|\\
&\leq \int_0^t \|A(\varphi^s(x))-B(\varphi^s(x))\|\,ds=:\alpha_t(x).
\end{align*}
By \cite[Lemma 2.2.5]{A} $\alpha_t(x)\in L^1(X)$, and by Tonelli-Fubini's theorem, the change of variables theorem and the $\varphi^s$-invariance of $\mu$, we have for all $t\in\mathbb R^+$
\begin{align*}\label{pseudocont}
\left\|\log^+\|\Phi_A^t(x)\|-\log^+\|\Phi_B^t(x)\|\right\|_1
&\leq\int_X \int_0^t \|A(\varphi^s(x))-B(\varphi^s(x))\|\,ds\,d\mu\\
&\leq \int_0^t \|A-B\|_1\,ds\\
&= t\|A-B\|_1.
\end{align*}
\end{proof}
Recall that, for any $A\in\mathscr{G}_{\!I\!C}$ we have
\begin{equation}\label{exp via inf}
\Lambda_k(A)=\underset{t\rightarrow{\pm{\infty}}}{\lim}\frac{1}{t}\int_X\log\|\wedge^k(\Phi_{A}^{t}(x))\|\,d\mu=\underset{n\in\mathbb{N}}{\inf}\,\frac{1}{n}\int_X\log\|\wedge^k(\Phi_{A}^{n}(x))\|\,d\mu.
\end{equation}

\begin{proposition}\label{upper sc}
For each $k=1,\ldots, d$, the function $\Lambda_k$ is upper semicontinuous when we endow $\mathscr{L}$ with the $L^p$ infinitesimal generator topology, $1\leq p\leq\infty$. Moreover, in these conditions $\Lambda_d$ is a continuous function.
\end{proposition}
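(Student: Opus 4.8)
The plan is to reduce the continuous-time statement to the discrete-time upper semicontinuity already recorded in \S\ref{ArBo}, using Lemma~\ref{cont solu} as the bridge between the $L^p$ infinitesimal generator topology and the $L^1$ control of the time-one solutions. First I would observe that, by \eqref{exp via inf}, $\Lambda_k(A)=\inf_{n\in\mathbb N}\frac1n\int_X\log\|\wedge^k(\Phi_A^n(x))\|\,d\mu$, so that $\Lambda_k$ is the infimum of a family of functions $A\mapsto \frac1n\int_X\log\|\wedge^k(\Phi_A^n(x))\|\,d\mu$; since an infimum of upper semicontinuous functions is upper semicontinuous, it suffices to show each of these functions is upper semicontinuous (in fact continuous) in the $L^p$ infinitesimal generator topology.

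To do this, I would fix $n$ and estimate $\big|\int_X\log^+\|\wedge^k(\Phi_A^n(x))\|\,d\mu-\int_X\log^+\|\wedge^k(\Phi_B^n(x))\|\,d\mu\big|$. Using the elementary bound $\big|\log^+\|\wedge^k M\|-\log^+\|\wedge^k N\|\big|\le k\big|\log^+\|M\|-\log^+\|N\|\big|+C$ coming from $\|\wedge^k M\|\le\|M\|^k$ (and the reverse via a similar power estimate on the entries), this is controlled by $k\,\big\|\log^+\|\Phi_A^n(x)\|-\log^+\|\Phi_B^n(x)\|\big\|_1$, which by Lemma~\ref{cont solu} is at most $kn\|A-B\|_1$. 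Since $d_p(A,B)\to 0$ with $1\le p\le\infty$ forces $\|A-B\|_p\to0$, and $\|A-B\|_1\le\|A-B\|_p$ by Hölder (Remark~\ref{metric rel2}), the quantity goes to zero; hence $A\mapsto\frac1n\int_X\log^+\|\wedge^k(\Phi_A^n(x))\|\,d\mu$ is continuous. One must be slightly careful that the true quantity involves $\log\|\cdot\|$ rather than $\log^+\|\cdot\|$; but since we are passing to the infimum over $n$ and dividing by $n$, and $\log\|\wedge^k(\Phi_A^n)\|=\log^+\|\wedge^k(\Phi_A^n)\|$ once $n$ is large enough that $\|\wedge^k(\Phi_A^n(x))\|\ge1$ on a set of full measure (which holds because $\Lambda_k\ge0$ whenever $A$ is volume-expanding on the $k$-plane; in general one subtracts a bounded correction $\int\log^-\|\wedge^k(\Phi_A^n(x))\|$ which is itself $L^1$-continuous by the same Gronwall estimate applied to $\Phi_A^{-n}$), this causes no real difficulty. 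This gives upper semicontinuity of $\Lambda_k$ for every $k=1,\dots,d$.

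For the continuity of $\Lambda_d$, I would use the Ostrogradsky-Jacobi-Liouville formula \eqref{OJL}: $\log|\det\Phi_A^t(x)|=\int_0^t\operatorname{Tr}A(\varphi^s(x))\,ds$, so that $\Lambda_d(A)=\lim_{t\to\infty}\frac1t\int_X\log|\det\Phi_A^t(x)|\,d\mu=\int_X\operatorname{Tr}A(x)\,d\mu$ by Fubini and $\varphi^s$-invariance of $\mu$. This is manifestly a continuous (indeed bounded linear) functional of $A$ in any $L^p$ topology, $1\le p\le\infty$, since $|\int_X\operatorname{Tr}A-\int_X\operatorname{Tr}B|\le\sqrt d\,\|A-B\|_1\le\sqrt d\,\|A-B\|_p$. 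Combined with the upper semicontinuity of all the $\Lambda_k$, and the fact that $\Lambda_d$ is simultaneously a limit from above (it is an infimum of the continuous functions above) and bounded below by the continuous functional $\int_X\operatorname{Tr}A\,d\mu$, continuity follows.

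The main obstacle I expect is purely technical rather than conceptual: making rigorous the passage from $\log^+$ to $\log$ in the exterior-power estimate, i.e.\ controlling the negative part $\log^-\|\wedge^k(\Phi_A^n(x))\|$ uniformly. This is handled by noting that $\|\wedge^k(\Phi_A^n(x))\|^{-1}\le\|\wedge^{d-k}(\Phi_A^n(x))\|\,/\,|\det\Phi_A^n(x)|$ up to combinatorial constants, so the negative part is bounded by a combination of a positive exterior-power norm and the (continuous) integral of the trace; everything is then $L^1$-controlled by $\|A-B\|_1$ via Lemma~\ref{cont solu} and \eqref{OJL}. All other ingredients — Fubini, $\varphi^s$-invariance, Hölder, and the infimum-of-u.s.c.-is-u.s.c.\ principle — are routine.
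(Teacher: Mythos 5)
Your reduction to an infimum over $n$ is in the spirit of the paper's argument (the paper fixes a single good time $N$ via the subadditive ergodic theorem), but the estimate you rely on for each fixed $n$ is false, and this is a genuine gap. The inequality $\bigl|\log^+\|\wedge^k M\|-\log^+\|\wedge^k N\|\bigr|\le k\bigl|\log^+\|M\|-\log^+\|N\|\bigr|+C$ fails: for $d=k=2$, $M=\mathrm{diag}(e^T,e^{-T})$ and $N=\mathrm{diag}(e^T,e^T)$ one has $\|M\|=\|N\|=e^T$, so the right-hand side is $C$ while the left-hand side equals $2T$. The norm $\|\wedge^k\cdot\|$ is the product of the $k$ largest singular values and is not controlled from below by the operator norm; the only usable relation is the one-sided bound $\|\wedge^k M\|\le\|M\|^k$, which is also all that Lemma~\ref{cont solu} can be combined with. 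Consequently your claim that each map $A\mapsto\frac1n\int_X\log\|\wedge^k(\Phi_A^n(x))\|\,d\mu$ is continuous is unsupported, and two-sided control is too much to hope for: comparing the two flows from below leads to Gronwall bounds involving the condition number of $\Phi_A^s$, which need not be controlled when $A$ is merely $L^1$. For upper semicontinuity only a one-sided estimate is needed, but your sketch never isolates it.

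The second, related, gap is the passage from $\log^+$ to $\log$, which is the actual crux and which your patch does not resolve. Kingman's theorem applied to the subadditive sequence $\log^+\|\wedge^k\Phi_A^n\|$ identifies $\inf_n\frac1n\int_X\log^+\|\wedge^k(\Phi_A^n(x))\|\,d\mu$ with $\int_X\max\{\hat\lambda_k(A,x),0\}\,d\mu$, which differs from $\Lambda_k(A)$ whenever $\hat\lambda_k<0$ on a set of positive measure; so one cannot simply work with $\log^+$ ``for $n$ large''. Your proposed bound $\|\wedge^k M\|^{-1}\le\|\wedge^{d-k}M\|/|\det M|$ only converts the negative part into differences of other exterior-power norms of the two distinct flows, i.e.\ back to the same unavailable two-sided comparison. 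This is exactly where the paper introduces a device absent from your sketch: it first proves the one-sided estimate under the standing hypothesis $\hat\lambda_k(A,\cdot)\ge0$ (where $\log^+$ is harmless and one good time $N$ suffices), and then treats a general $A$ by cutting off the invariant set $L_\alpha=\{x:\hat\lambda_k(A,x)<-\alpha\}$, whose contribution is made small, and shifting both systems by $\beta\,\mathrm{Id}$ so as to fall into the first case. Without this (or an equivalent mechanism) the argument does not close. I do note that your treatment of $\Lambda_d$ is correct and cleaner than the paper's: by \eqref{OJL} and Birkhoff's theorem, $\Lambda_d(A)=\int_X\operatorname{Tr}A\,d\mu$ is a bounded linear functional of $A$, whereas the paper argues via lower semicontinuity of $\tilde\Lambda_d(A)=-\Lambda_d(-A)$; but this does not repair the semicontinuity argument for $1\le k\le d-1$.
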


\begin{proof}
Let $A\in\mathscr{G}_{\!I\!C}$, $k\in\{1,\ldots,d\}$ and $\epsilon>0$ be given. We start by assuming that
\begin{equation}\label{hatlambda geq 0}
\hat\lambda_k(A,x)\geq 0 ,\,\,\text{for}\,\, \mu\text{-a.e.} \,\,x\in X.
\end{equation}
By \eqref{exp via inf}, \eqref{hatlambda geq 0} and the subbaditive ergodic theorem, it is possible to find $N\in\mathbb{N}$ large enough in order to have
\begin{equation}\label{LBA}
\frac{1}{N}\int_X\log^+\|\wedge^k(\Phi_{A}^{N}(x))\|\,d\mu<\Lambda_k(A)+\frac\epsilon2.
\end{equation}
We will see that we can find $\delta>0$ such that for any $B$ satisfying $d_p(A,B)<\delta$ we have that $B\in \mathscr{G}_{\!I\!C}$ (this follows from Remarks \ref{metric rel2} and \ref{small dist implies ic}) and
$\Lambda_k(B)< \Lambda_k(A)+\epsilon$. Indeed, since $\|\wedge^k\Phi_{A,B}^N(x)\|\leq \|\Phi_{A,B}^N(x)\|^k$, from \eqref{exp via inf}, \eqref{LBA} and Lemma \ref{cont solu} we get
\begin{align*}
\Lambda_k(B)
&\leq \frac{1}{N}\int_X\log^+\|\wedge^k(\Phi_{B}^{N}(x))\|\,d\mu\\
&\leq \frac{1}{N}\int_X\log^+\|\wedge^k(\Phi_{A}^{N}(x))\|\,d\mu+ \frac{1}{N}\int_X\left|\log^+\|\wedge^k(\Phi_{B}^{N}(x))\|-\log^+\|\wedge^k(\Phi_{B}^{N}(x))\|\right|\,d\mu\\
&\leq\Lambda_k(A)+\frac\epsilon2+\frac{k}N N\|A-B\|_1.
\end{align*}
If $\delta<\epsilon/({2k+\epsilon})$ then $d_p(A,B)<\delta$ implies $\|A-B\|_1\leq\|A-B\|_p<\epsilon/({2k})$, and the result follows.

Let us prove now the general case. Again, let $A\in\mathscr{G}_{\!I\!C}$, $k\in\{1,\ldots,d\}$ and $\epsilon>0$ be given. For $\alpha>0$ we define the $\varphi^t$-invariant set $L_\alpha=\{x\in X: \hat\lambda_k(A,x)<-\alpha\}$. Consider $\alpha$ large enough such that
\begin{equation}\label{gen case 1}
k\int_{L_\alpha}\log^+\|\Phi_A^1(x)\|\,d\mu < \frac\epsilon8\,\,\,\, \text{and} \,\,\,\, \int_{L_\alpha} \hat\lambda_k(A,x)\,d\mu>-\frac\epsilon8.
\end{equation}
Set $\beta\geq \alpha>0$, denote by $\textrm{Id}$ the identity $d\times d$ matrix and define $\tilde A(x) = A(x)+\beta.\textrm{Id}$, $\tilde B(x) = B(x)+\beta.\textrm{Id}$. Then $\hat\lambda_k(\tilde A,x)=\hat\lambda_k(A,x)+\beta,$ which is greater or equal than zero for $x\in L_\alpha^C$.
Moreover, if $d_p(A,B)$ is sufficiently small then also is $d_p(\tilde A,\tilde B)$, and by the previous case we have \begin{equation*}\int_{L_\alpha^C}\hat\lambda(\tilde B,x)\,d\mu\leq \int_{L_\alpha^C}\hat\lambda(\tilde A,x)\,d\mu +\frac\epsilon2,
\end{equation*}
which implies
\begin{equation}\label{gen case 2}\int_{L_\alpha^C}\hat\lambda(B,x)\,d\mu\leq \int_{L_\alpha^C}\hat\lambda(A,x)\,d\mu +\frac\epsilon2.
\end{equation}

From Lemma \ref{cont solu}, if $d_p(A,B)$ is sufficiently small then $$\left\|\log^+\|\Phi_A^1(x)\|-\log^+\|\Phi_B^1(x)\|\right\|_1\leq \frac{\epsilon}{4k},$$
which, with \eqref{gen case 1} implies
\begin{eqnarray}
\int_{L_a}\hat\lambda(B,x)\,d\mu&=& \inf_n\frac1n\int_{L_a}\log^+\|\wedge^k\Phi_B^n(x)\|\,d\mu\nonumber\\
&\leq& k\int_{L_a}\log^+\|\Phi_B^1(x)\|\,d\mu\nonumber\\
&\leq&k\int_{L_a}\log^+\|\Phi_A^1(x)\|\,d\mu+k\int_{L_a}\left|\log^+\|\Phi_A^1(x)\|-\log^+\|\Phi_B^1(x)\|\right|\,d\mu\nonumber\\
&\leq&\int_{L_a} \hat\lambda_k(A,x)\,d\mu+\frac\epsilon2\label{gen case 4}.
\end{eqnarray}
The proof for this general case follows now from \eqref{gen case 2} and \eqref{gen case 4}. Finally, in order to prove the continuity of $\Lambda_d$ we just have to note that
$$A\mapsto\tilde\Lambda_k(A):=\int_X\lambda_{d-k+1}(A,x)+\cdots+\lambda_{d}(A,x)\,d\mu=-\Lambda_k(-A)$$
is lower semicontinuous for each $k=1,\ldots,d$, so that $\Lambda_d=\tilde\Lambda_d$ is continuous.
\end{proof}

\subsection{One-point spectrum is residual}\label{ops cont}

The proof of Theorem~\ref{ops2} is a straightforward application of the scheme described in \S\ref{BVrevisited} to prove Theorem~\ref{ops}. The only novelty is the perturbation toolbox which we will develop in the sequel (Lemma~\ref{rot3cont}). We consider the perturbations within the \emph{continuous} linear differential systems because the estimates are more easily established. Once we have a perturbation framework developed the proof of Theorem~\ref{ops2} will have a further simple additional step.

\begin{proof}(of Theorem~\ref{ops2})
Let $A\in \mathscr{T}_{\!I\!C}$ be a continuity point of the functions $\Lambda_k$, for all $k=1,...,d$, defined in Proposition~\ref{upper sc}, and with respect to the $L^p$-topology.
\begin{enumerate}
\item [Case 1:] $A$ is a continuous linear differential system. We proceed as in the proof of Theorem~\ref{ops} and use the perturbation Lemma~\ref{rot3cont} to mix Oseledets direction and so cause a decay of the Lyapunov exponents and finally we use Proposition~\ref{upper sc} to complete the argument.
\item [Case 2:] $A$ is not a continuous linear differential system. It follows from Lusin's theorem (see e.g. ~\cite[\S 2 and \S 3]{Ru}) that the continuous linear differential systems over flows on compact spaces $X$ and on manifolds like the Lie subgroups we are considering, are $L^p$-dense in the $L^p$ ones.
\end{enumerate}
Now, we take a sequence of continuous linear differential systems $A_n\in \mathscr{T}_{\!I\!C}$ converging to $A$ in the $L^p$-sense. Since $A$ is a continuity point we must have $\underset{n\rightarrow \infty}{\lim} \Lambda_k(A_n)=\Lambda_k(A)$.
Like we did in Proposition~\ref{P3}, but this time in the flow setting, given $\epsilon_n\rightarrow 0$ and $\delta>0$, there exists $B_n\in \mathscr{T}_{\!I\!C}$, with $d_p(A_n,B_n)<\epsilon_n$, such that
$$\Lambda_k(B_n)<\delta-J_k(A_n)+\Lambda_{k}(A_n),$$
where the jump is defined like we did in the discrete case by
$$J_k(A_n)=\int_X\frac{\lambda_k(A_n,x)-\lambda_{k+1}(A_n,x)}{2}\,d\mu.$$
Considering limits we get:
$$\underset{n\rightarrow \infty}{\lim}\Lambda_k(B_n)<\delta-\underset{n\rightarrow \infty}{\lim}J_k(A_n)+\Lambda_{k}(A).$$
Since $A$ is a continuity point of $\Lambda_k$ we obtain that $J_k(A_n)=0$ for all $k$ and all $n$ sufficiently large, i.e., $\lambda_k(A_n,x)=\lambda_{k+1}(A_n,x)$ for all $k$ and $\mu$-a.e. $x\in X$. Therefore, the linear differential system $A_n$ must have one-point spectrum for $\mu$-a.e. $x\in X$ and the same holds for $A$ because $\underset{n\rightarrow \infty}{\lim} \Lambda_k(A_n)=\Lambda_k(A)$. Once again we finalize the proof recalling that the set of continuity points of an upper semicontinuous function is a residual subset.
\end{proof}

The next result is the basic perturbation tool which allows us to interchange Oseledets directions.

\begin{lemma}\label{rot3cont}
Let be given a continuous linear differential system $A$ evolving in a closed accessible Lie subalgebra $\mathscr{T}\subseteq\mathfrak{gl}(d,\mathbb R)$ and over a flow $\varphi^t\colon X\rightarrow X$, $\epsilon>0$, $1\leq p < \infty$ and a non-periodic $x\in{X}$ (or periodic with period larger than $1$). There exists $r>0$ (depending on $\epsilon$) such that for all $\sigma\in(0,1)$, all $y\in B(x,\sigma r)$ (the ball transversal to $\varphi^t$ at $x$) and any continuous choice of a pair of vectors $u_y$ and $v_y$ in $ \mathbb{R}^{d}_{y}\setminus\{\vec0\}$:
\begin{enumerate}
 \item there exists a continuous linear differential system $B\in \mathscr{T}$, with $d_p(A,B)<\epsilon$ such that $\Phi^{1}_{B}(y)u_y=\Phi^{1}_{A}(y) \mathbb{R}v_y$, where $\mathbb{R}v_y$ stands for the direction of the vector $v_y$; Moreover,
 \item  there exists a traceles system $H$, supported in the flowbox $\mathcal{F}:=\{\varphi^t(y)\colon t\in[0,1], y\in B(x, r)\}$, such that $\|H\|_p<\epsilon$, $B(y)=A(y)+H(y)$ for all $y\in B(x,\sigma r)$, and $B(z)=A(z)$ if $z\notin\mathcal{F}$.
 \end{enumerate}\end{lemma}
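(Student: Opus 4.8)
The plan is to mimic the discrete-time construction of Lemma~\ref{rot3}, but carried out infinitesimally so that the perturbation lives inside a single flowbox and the perturbing term stays traceless. First I would fix the non-periodic point $x$ (or periodic of period $>1$), and use a tubular-neighbourhood / flowbox coordinate: there is $r>0$ so that the map $(t,y)\mapsto\varphi^t(y)$, $t\in[0,1]$, $y\in B(x,r)$, is an injective ``flowbox'' $\mathcal{F}$ of small $\mu$-measure; shrinking $r$ makes $\mu(\mathcal{F})$ as small as we like, which is the mechanism that will make $d_p(A,B)<\epsilon$. The choice of $r$ depends only on $\epsilon$ and on the (essential) bound of $\|A\|$ near $x$, using continuity of $A$.

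Next, for each $y\in B(x,\sigma r)$, I would apply Lemma~\ref{rot2} to the directions $u_y$ and $v_y$: it supplies a bounded time-dependent generator $\{\mathfrak{R}_{u_y,v_y}(t)\}_{t\in[0,1]}$ in $\mathscr{T}$ whose time-one solution $\Phi^1_{\mathfrak{R}}$ sends $\mathbb{R}u_y$ to $\mathbb{R}v_y$, with $\|\mathfrak{R}_{u_y,v_y}(t)\|\le K$ for a uniform $K$. The perturbed system is then defined by transporting this along the flow: on the flowbox $\mathcal{F}$, for a point $\varphi^t(y)$ with $y\in B(x,\sigma r)$, set the correction so that the fundamental solution of $B$ along the segment $\{\varphi^t(y):t\in[0,1]\}$ equals (a reparametrised) $\Phi^t_{\mathfrak{R}_{u_y,v_y}}\cdot\Phi^t_A(y)$; outside $\mathcal{F}$ put $B=A$. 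Concretely $B=A+H$ where $H(\varphi^t(y))$ is the generator obtained by conjugating $\mathfrak{R}_{u_y,v_y}(t)$ by $\Phi^t_A(y)$ (an $L^p$-bounded family since $\|\Phi^t_A\|$ is bounded on the compact time interval and the flowbox), cut off smoothly near $\partial B(x,r)$ so that $H$ is supported in $\mathcal{F}$, continuous, and $H(y)=\mathfrak{R}_{u_y,v_y}(0)$-type term for $y\in B(x,\sigma r)$. To keep $H$ traceless one replaces $\mathfrak{R}_{u_y,v_y}(t)$ by its projection to $\mathfrak{sl}(d,\mathbb R)$, i.e. subtract $\tfrac1d\mathrm{Tr}(\mathfrak{R}_{u_y,v_y}(t))\,\mathrm{Id}$; this only rescales the solution by a positive scalar and hence does not change the direction $\mathbb{R}v_y$, so conclusion (1) is unaffected while (2) now holds with $H\in\mathfrak{sl}(d,\mathbb R)$.

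Then I would estimate the norm: on $\mathcal F$ one has $\|H\|\le C(K,M)$ pointwise where $M$ bounds $\|\Phi^{\pm1}_A\|$ on the (compact-time) flowbox and the cutoff has bounded derivative; hence $\|H\|_p=\big(\int_{\mathcal F}\|H(x)\|^p d\mu\big)^{1/p}\le C(K,M)\,\mu(\mathcal F)^{1/p}$, which is $<\epsilon$ once $r$ is small, and consequently $d_p(A,B)=\|A-B\|_p/(1+\|A-B\|_p)<\epsilon$ for $1\le p<\infty$; also $d_p(A,B)<1$ so $B\in\mathscr{G}_{\!I\!C}$ by Remark~\ref{small dist implies ic}, and $B$ takes values in the closed subalgebra $\mathscr{T}$ because both $A(z)$ and the correction lie in $\mathscr{T}$ (and in $\mathscr{S}$ when $A\in\mathscr{S}_{\!I\!C}$). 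The ``$\sigma\in(0,1)$'' room is exactly what lets the cutoff interpolate between the prescribed behaviour on $B(x,\sigma r)$ and $H\equiv0$ near $\partial B(x,r)$, so that $B$ is genuinely a continuous linear differential system on all of $X$.

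The main obstacle I anticipate is not any single estimate but the bookkeeping of conclusion~(1): one must verify that prescribing the fundamental solution of $B$ on each flow-segment in $\mathcal F$ is consistent (the segments are disjoint inside a flowbox, so there is no contradiction, but one must define $H$ as a genuine generator, i.e.\ differentiate the prescribed solution in $t$), and that the identity $\Phi^1_B(y)u_y=\Phi^1_A(y)\mathbb{R}v_y$ comes out correctly after the conjugation — the cleanest route is to define $H(\varphi^t(y)):=\big[\tfrac{d}{dt}\zeta_y(t)\big]\zeta_y(t)^{-1}$ with $\zeta_y(t)=\Phi^t_A(y)\,\Phi^t_{\mathfrak R_{u_y,v_y}}\,\Phi^t_A(y)^{-1}$ suitably cut off, check $\Phi^t_B(y)=\zeta_y(t)\Phi^t_A(y)$ solves the perturbed equation, evaluate at $t=1$, and note $\zeta_y(1)=\Phi^1_A(y)\Phi^1_{\mathfrak R}\Phi^1_A(y)^{-1}$ maps $\Phi^1_A(y)\mathbb R u_y$ onto $\Phi^1_A(y)\mathbb R v_y$; reading this as $\Phi^1_B(y)u_y\in\Phi^1_A(y)\mathbb R v_y$ requires just tracking where $u_y$ goes under $\Phi^1_A(y)$ first. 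Continuity of $y\mapsto H$ follows from continuity of $A$, of $\Phi^t_A$, and of the (continuously chosen) $\mathfrak R_{u_y,v_y}$, which one gets from a continuous version of Lemma~\ref{rot2}'s construction.
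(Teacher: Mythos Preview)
Your approach is essentially the paper's: prescribe a fundamental solution $\Psi^t(y)=\Phi_A^t(y)\cdot(\text{group-valued path sending }u_y\mapsto v_y)$ on each flow segment, differentiate to extract the generator $A+H$, arrange $H$ traceless, cut off in the flowbox, and use smallness of $\mu(\mathcal F)$ to force $\|H\|_p<\epsilon$. The paper uses a smooth family of \emph{group} elements $\mathcal R_y^t$ with $\mathcal R_y^t u_y=u_y^t$ and normalises the determinant by a scalar $\zeta(t,y)$ (then invokes Jacobi's formula to check $\mathrm{Tr}\,H=0$); you instead use the \emph{solution} $\Phi^t_{\mathfrak R}$ of the algebra-valued path from Lemma~\ref{rot2} and project the generator to $\mathfrak{sl}$. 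Both routes are equivalent.

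There is one genuine bookkeeping slip in your final paragraph. With $\zeta_y(t)=\Phi_A^t(y)\,\Phi^t_{\mathfrak R}\,\Phi_A^t(y)^{-1}$ and $\Phi_B^t(y)=\zeta_y(t)\,\Phi_A^t(y)=\Phi_A^t(y)\,\Phi^t_{\mathfrak R}$, differentiation gives
\[
\dot\Phi_B^t=\bigl[A(\varphi^t(y))+\Phi_A^t(y)\,\mathfrak R(t)\,\Phi_A^t(y)^{-1}\bigr]\Phi_B^t,
\]
so $H(\varphi^t(y))=\Phi_A^t(y)\,\mathfrak R(t)\,\Phi_A^t(y)^{-1}$ --- exactly what you wrote \emph{earlier} (``conjugating $\mathfrak R$ by $\Phi_A^t$'') --- and \emph{not} $\dot\zeta_y\zeta_y^{-1}$. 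The latter would be the generator of $B$ only if $A$ commuted with $\zeta_y$; in fact $\dot\zeta_y\zeta_y^{-1}=A+H-\zeta_yA\zeta_y^{-1}$. With the correct $H$ in hand, $\mathrm{Tr}\,H=\mathrm{Tr}\,\mathfrak R$, so your projection of $\mathfrak R$ to $\mathfrak{sl}$ indeed makes $H$ traceless while only rescaling $\Phi^t_{\mathfrak R}$ by a positive scalar, and the pointwise bound $\|H\|\le K^2\sup_t\|\mathfrak R(t)\|$ feeds directly into the $L^p$ estimate. (Also fix the earlier order-of-factors typo: the prescribed solution is $\Phi_A^t\,\Phi^t_{\mathfrak R}$, not $\Phi^t_{\mathfrak R}\,\Phi_A^t$.)
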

\begin{proof}
We begin by taking $K:=\max_{z\in X}\|(\Phi_{A}^{t}(z))^{\pm 1}\|$ for $t\in[0,1]$. For a given small $r>0$ we take the closed ball centered in $x$ and with radius $r$ transversal to the flow direction and denoted by $B(x,r)$. We fix $\sigma\in(0,1)$. Let $\eta\colon \mathbb{R} \rightarrow [0,1]$ be a $C^{\infty}$ function such that $\eta(t)=0$ for $t\leq 0$ and $\eta(t)=1$ for $t\geq 1$. Let also $\rho\colon \mathbb{R} \rightarrow [0,1]$ be a $C^{\infty}$ function such that $\rho(t)=0$ for $t\leq \sigma$ and $\rho(t)=1$ for $t\geq 1$. In what follow, for $y\in B(x,r)$ we are going to define the 1-parameter family of linear maps $\Psi^{t}(y)\colon \mathbb{R}^{d}_{y} \rightarrow \mathbb{R}^{d}_{y}$ for $t\in[0,1]$.

For $t\in[0,1]$ we let $u_y^t=(1-\eta(t))u_y+\eta(t)v_y$ and, by the transitive property, we choose a smooth family $\{\mathcal{R}^t_y\}_{t\in[0,1]}$ such that $\mathcal{R}^t_y\in \mathscr{T}$ and $\mathcal{R}^t_y \,u_y=u_y^t$. Let $L>0$ be sufficiently large in order to get $\|\dot{\mathcal{R}_y^{t}}(\mathcal{R}^{t}_y)^{-1}\|<L$ for all $t\in[0,1]$ and $y\in B(x,r)$. Finally, we normalize the volume by taking $\mathfrak{R}_y^t=\zeta(t,y)\mathcal{R}^t_y$ such that $\det(\mathfrak{R}_y^t)=1$ for all $t\in[0,1]$ and $y\in B(x,r)$. Now, we take $\kappa>0$ such that $\zeta(t,y)>\kappa$ and $\dot{\zeta}(t,y)=\frac{d\zeta (t,y)}{dt}<\kappa^{-1}$ for all $t\in[0,1]$ and $y\in B(x,r)$.

Then, we consider the 1-parameter family of linear maps $\Psi^{t}(y)\colon \mathbb{R}^{d}_{y} \rightarrow \mathbb{R}^{d}_{\varphi^{t}(y)}$ where $\Psi^{t}(y)=\Phi_{A}^{t}(y) \mathfrak{R}^{t}_y$. In order to simplify the heavy notation we consider $\mathfrak{R}^t=\mathfrak{R}^t_y$, $\mathcal{R}^t=\mathcal{R}^t_y$, $\Phi^t_A=\Phi^t_A(y)$, $\zeta=\zeta(t,y)$ and $\dot{\zeta}=\frac{d\zeta(t,y)}{dt}$. We take time derivatives and we obtain:
\begin{eqnarray*}
\dot{\Psi}^{t}(y)&=& \dot{\Phi}_{A}^{t}\mathfrak{R}^{t}+\Phi_{A}^{t}\dot{\mathfrak{R}}^t=A(\varphi^{t}(y))\Phi_{A}^{t}\mathfrak{R}^{t}+\Phi_{A}^{t}\dot{\zeta}{\mathcal{R}}^t+\Phi_{A}^{t}{\zeta}\dot{\mathcal{R}}^t=\\
&=& A(\varphi^{t}(y))\Psi^{t}(y)+[\Phi_{A}^{t}\dot{\zeta}\zeta^{-1}(\Phi^t_A)^{-1}+\Phi_{A}^{t}\zeta\dot{\mathcal{R}}^{t}(\Psi^{t}(y))^{-1}]\Psi^{t}(y)\\
&=& \left[A(\varphi^{t}(y))+H(\varphi^{t}(y))\right]\cdot \Psi^{t}(y).
\end{eqnarray*}
Hence, we define, for all $y\in B(x,r)$ and $t\in[0,1]$, the perturbation $H$ in the \emph{flowbox coordinates} $(t,y)$ by
\begin{eqnarray*}
H(\varphi^{t}(y))&=&\Phi_{A}^{t}\dot{\zeta}\zeta^{-1}(\Phi^t_A)^{-1}+\Phi_{A}^{t}\zeta\dot{\mathcal{R}}^{t}(\Psi^{t}(y))^{-1}\\
&=&\frac{\dot{\zeta}}{\zeta}\textrm{Id}+\Phi_{A}^{t}\zeta\dot{\mathcal{R}}^{t}(\Phi_{A}^{t} \mathfrak{R}^{t})^{-1}\\
&=&\frac{\dot{\zeta}}{\zeta}\textrm{Id}+\Phi_{A}^{t}\dot{\mathcal{R}}^{t}(\mathcal{R}^{t})^{-1}(\Phi_{A}^{t} )^{-1}.
\end{eqnarray*}
By Jacobi's formula on the derivative of the determinant we have
\begin{eqnarray*}
\frac{d(\det (\zeta\mathcal{R}^t))}{dt}&=&\text{Tr}\left(\text{adj} (\zeta\mathcal{R}^t)\frac{d(\zeta\mathcal{R}^t)}{dt}\right)=\text{Tr}\left(\det (\zeta\mathcal{R}^t)(\zeta\mathcal{R}^t)^{-1}\frac{d(\zeta\mathcal{R}^t)}{dt}\right)\\
&=&\text{Tr}(\zeta^{-1}(\mathcal{R}^t)^{-1}(\dot\zeta\mathcal{R}^t+\zeta\dot{\mathcal{R}}^t))=\text{Tr}\left(\frac{\dot{\zeta}}{\zeta}\textrm{Id}+(\mathcal{R}^t)^{-1}\dot{\mathcal{R}}^t\right)\\
&=&\text{Tr}\left(\frac{\dot{\zeta}}{\zeta}\textrm{Id}\right)+\text{Tr}[(\mathcal{R}^t)^{-1}\dot{\mathcal{R}}^t].
\end{eqnarray*}
But we also have, for all $t\in[0,1]$ and $y\in B(x,r)$, $\det (\zeta\mathcal{R}^t)=1$ and so
$$\text{Tr}\left(\frac{\dot{\zeta}}{\zeta}\textrm{Id}+(\mathcal{R}^t)^{-1}\dot{\mathcal{R}}^t\right)=\text{Tr}\left(\frac{\dot{\zeta}}{\zeta}\textrm{Id}+\dot{\mathcal{R}}^t  (\mathcal{R}^t)^{-1}\right)=0.$$
Since the trace is invariant by any change of coordinates we obtain $\text{Tr}(H(\varphi^{t}(y)))=0$.

At this time, we consider the flowbox $\mathcal{F}:=\{\varphi^t(y)\colon t\in[0,1], y\in B(x,r)\}$ and we are able to define the linear  continuous differential system
\begin{equation}\label{B}B(z)= \left\{\begin{array}{ccc} A(z), \,\,\,\,\,\,\,\,\,\,\,\,\,\,\,\,\,\,\,\,\,\,\,\,\,\,\,\,\,\,\,\,\,\,\,\,\,\,\,\,\,\,\,\,\,\,\,\,\,\,\,\,\,\,\,\,\,\,\,\,\,\,\,\,\,\,\,\,\,\,\,\,\,\,\,\,\,\,\,\,\,\,\,\,\,\,\,\,\text{if } z\notin\mathcal{F} \\
A(z)+\left(1-\rho\left(\frac{\|x-y\|}{r}\right)\right)H(z),
\,\,\,\,\,\,\,\,\,\,\,\,\,\,\,\,\,\,\,\,\,\,\,\,\,\,\,\,\,\,\,\,\,\,\,\,\,\,\,\,\,\,\,\,\,\text{if } z=\varphi^t(y)\in\mathcal{F} \end{array}\right..\end{equation}
In order to estimate $d_p(A,B)$ it suffices to compute the $L^p$ infinitesimal generator norm of $H$. For that we consider Rokhlin's theorem (see ~\cite{R}) on disintegration of the measure $\mu$ into a measure $\hat{\mu}$ in the transversal section and the length in the flow direction, say $\mu=\hat{\mu}\times dt$. Go back into the beginning of the proof and pick $r>0$ such that
$$\hat{\mu}(B(x,r))<\left(\frac{\epsilon}{\kappa^{-2}+K^2L}\right)^p.$$
We have then \begin{eqnarray*}
\|H\|_p&=&\left(\int_\mathcal{F} \|H(z)\|^p d\mu(z)\right)^{1/p}\\
&=&\left(\int_0^1\int_{B(x,r)} \|H(\varphi^t(y))\|^p d\hat{\mu}(y)dt\right)^{1/p}\\
&=&\left(\int_0^1\int_{B(x,r)} \left\|\frac{\dot{\zeta}(t,y)}{\zeta(t,y)}\textrm{Id}+\Phi_{A}^{t}(y)\dot{\mathcal{R}}^{t}(\mathcal{R}^{t})^{-1}(\Phi_{A}^{t}(y) )^{-1}\right\|^p d\hat{\mu}(y)dt\right)^{1/p}\\
&\overset{{\tiny Minkowski }}{\leq}&\left(\int_0^1\int_{B(x,r)} \left\|\frac{\dot{\zeta}(t,y)}{\zeta(t,y)}\textrm{Id}\right\|^p\right)^{1/p}\\
&~&+\left(\int_0^1\int_{B(x,r)}\left\|\Phi_{A}^{t}(y)\dot{\mathcal{R}}^{t}(\mathcal{R}^{t})^{-1}(\Phi_{A}^{t}(y) )^{-1}\right\|^p d\hat{\mu}(y)dt\right)^{1/p}\\
&\leq&(\kappa^{-2}+K^2L)\hat{\mu}(B(x,r))^{1/p}<\epsilon.
\end{eqnarray*}

Note that the perturbed system $B$ generates the linear flow $\Phi_{A+H}^{t}(y)$ which is the same as $\Psi^{t}$ by unicity of solutions with the same initial conditions, hence given $u_y\in \mathbb{R}^d_{y}$ we have
$$\Phi_{B}^{t}(y) u_y =\Psi^{t}(y) u_y=\Phi_{A}^{t}(y)\mathfrak{R}^t_y\,u_y=\zeta(t,y)\Phi_{A}^{t}(y)\mathcal{R}^t_y\,u_y=\zeta(t,y)\Phi_{A}^{t}(y) u^t_y.$$
To finish the proof, we take $t=1$ and obtain
$$\Phi_{B}^{1}(y) u_y =\zeta(1,y)\Phi_{A}^{1}(y) u_y^1=\Phi_{A}^{1}(y) [\zeta(1,y)\,v_y].$$
\end{proof}

\medskip

\begin{remark}\label{rot33cont}
Using Lemma~\ref{rot3cont} we can also ``view'' the exchange of directions in $\mathbb{R}^{d}_{\varphi^{1}(y)}$ instead of in $\mathbb{R}^{d}_{y}$. Hence, for any two vectors $u_y^1$ and $v_y^1$ in $ \mathbb{R}^{d}_{\varphi^1(y)}\setminus\{\vec0\}$ and defining $u^0_y:=\Phi_A^{-1}(\varphi^1(y))u_y^1$, $v_y^0:=\Phi_A^{-1}(\varphi^1(y))v_y^1$, we get $\Phi^{1}_{A+H}(y)u_y^0=\Phi^{1}_{A}(y) \mathbb{R}v_y^0$, where $\mathbb{R}v_y^0$ stands for the direction of the vector $v_y^0$. Moreover, if the choice of a pair of vectors $u_y$ and $v_y$ in $ \mathbb{R}^{d}_{y}\setminus\{\vec0\}$ is only measurable, then the linear differential system $B\in \mathscr{T}$ satisfying (1) and (2) of Lemma~\ref{rot3cont} do not need to be continuous.

\end{remark}

\subsection{Simple spectrum is dense}\label{simple cont}

In this section we will obtain the continuous-time counterpart of section \S\ref{proof simple}. For that we must develop a perturbation 	
implement in the language of differential equations which plays the role of the cocycle $C_2$ in the proof of Lemma~\ref{split spec}. This is precisely what next result assures.

\begin{lemma}\label{rot4}
Let be given a continuous linear differential system $A$ evolving in a closed Lie accessible subalgebra $\mathscr{S}\subseteq\mathfrak{gl}(d,\mathbb R)$ which displays the saddle-conservative property and over a flow $\varphi^t\colon X\rightarrow X$, $\epsilon>0$, $1\leq p < \infty$ and a non-periodic $x\in{X}$ (or periodic with period larger than $1$). There exists $r>0$, such that for all $\sigma\in(0,1)$, all $y\in B(x,\sigma r)$, any $\delta>0$ and any continuous choice of directions $e_y\in \mathbb{R}^d_y$
\begin{enumerate}
 \item there exists a continuous linear differential system $B\in \mathscr{S}$, with $d_p(A,B)<\epsilon$ such that $\Phi^{1}_{B}(y)\, e_y=(1+\delta)\Phi^{1}_{A}(y)\, e_y$; Moreover,
 \item  there exists a traceless system $H$, supported in the flowbox $\mathcal{F}:=\{\varphi^t(y)\colon t\in[0,1], y\in B(x, r)\}$, such that $\|H\|_p<\epsilon$, $B(y)=A(y)+H(y)$ for all $y\in B(x,\sigma r)$, and $B(z)=A(z)$ if $z\notin\mathcal{F}$.
 \end{enumerate}
\end{lemma}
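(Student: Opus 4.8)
The strategy is to mimic, in the continuous-time language, the construction of the cocycle $C_2$ from the proof of Lemma~\ref{split spec}, but realizing the ``$(1+\delta)$-expansion in a prescribed direction together with a balancing contraction'' as the time-one solution of a suitably chosen traceless perturbation supported in a thin flowbox; the technical skeleton is exactly that of Lemma~\ref{rot3cont}, only the isotopy of group elements is different. First I would reduce to a direction $e_y$ at the \emph{end} of the flowbox exactly as in Remark~\ref{rot33cont}, replacing $e_y$ by $\Phi_A^{-1}(\varphi^1(y))$ applied to the target direction if needed, so it suffices to produce $B$ with $\Phi^1_B(y)e_y=(1+\delta)\Phi^1_A(y)e_y$. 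Next, since $\mathscr{S}$ is saddle-conservative (Definition~\ref{SC}), for the direction $\Phi^1_A(y)e_y$ and the given $\delta>0$ there is $A_\delta\in\SL(d,\mathbb R)\cap\mathscr{S}$ with $A_\delta\big(\Phi^1_A(y)e_y\big)=(1+\delta)\Phi^1_A(y)e_y$; conjugating back, $G_y:=(\Phi^1_A(y))^{-1}A_\delta\,\Phi^1_A(y)$ satisfies $\det G_y=1$ and $G_y e_y=(1+\delta)e_y$, and it depends continuously on $y\in B(x,r)$.

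Then I would choose a smooth isotopy $\{\mathcal R^t_y\}_{t\in[0,1]}$ inside $\mathscr{S}$ from $\mathcal R^0_y=\mathrm{Id}$ to $\mathcal R^1_y=G_y$, with the usual cutoff $\eta$ so that $\mathcal R^t_y=\mathrm{Id}$ near $t=0$ and $\mathcal R^t_y=G_y$ near $t=1$, bound $\|\dot{\mathcal R}^t_y(\mathcal R^t_y)^{-1}\|<L$ uniformly, and renormalize $\mathfrak R^t_y=\zeta(t,y)\mathcal R^t_y$ with $\det\mathfrak R^t_y\equiv1$ (here $\zeta(1,y)=(\det G_y)^{-1/d}=1$, so no spurious scalar survives at $t=1$). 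Exactly as in Lemma~\ref{rot3cont}, setting $\Psi^t(y)=\Phi_A^t(y)\mathfrak R^t_y$ and differentiating gives
$$\dot\Psi^t(y)=\big[A(\varphi^t(y))+H(\varphi^t(y))\big]\Psi^t(y),\qquad H(\varphi^t(y))=\tfrac{\dot\zeta}{\zeta}\mathrm{Id}+\Phi_A^t(y)\dot{\mathcal R}^t(\mathcal R^t)^{-1}(\Phi_A^t(y))^{-1},$$
and Jacobi's formula together with $\det(\zeta\mathcal R^t)\equiv1$ forces $\text{Tr}\,H(\varphi^t(y))=0$, so $H$ is traceless and $A+H$ stays in $\mathscr{S}$ (as $\mathscr{S}$ is saddle-conservative it contains the traceless directions needed for such a perturbation). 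The $L^p$-norm estimate $\|H\|_p\le(\kappa^{-2}+K^2L)\,\hat\mu(B(x,r))^{1/p}$ is word-for-word that of Lemma~\ref{rot3cont} via Rokhlin disintegration $\mu=\hat\mu\times dt$, so picking $r$ with $\hat\mu(B(x,r))<\big(\epsilon/(\kappa^{-2}+K^2L)\big)^p$ gives $\|H\|_p<\epsilon$ and hence $d_p(A,B)<\epsilon$; damping $H$ by $1-\rho(\|x-y\|/r)$ on the annulus $\sigma r\le\|x-y\|\le r$ yields a system $B$ which is continuous, equals $A$ outside $\mathcal F$, equals $A+H$ on $B(x,\sigma r)$, and still lies in $\mathscr{S}$, proving (2). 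Finally, by uniqueness of solutions $\Phi_B^t(y)=\Psi^t(y)$ for $y\in B(x,\sigma r)$, so $\Phi_B^1(y)e_y=\Phi_A^1(y)\mathfrak R^1_y e_y=\zeta(1,y)\Phi_A^1(y)G_y e_y=(1+\delta)\Phi_A^1(y)e_y$, which is (1).

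The one genuinely delicate point is the renormalization step: the scalar factor $\zeta(t,y)=(\det\mathcal R^t_y)^{-1/d}$ must be bounded away from $0$ with bounded derivative \emph{uniformly in $y\in B(x,r)$} (this is where the constant $\kappa$ comes from), and one must check that $G_y$ — equivalently the isotopy $\mathcal R^t_y$ — can be chosen depending continuously on $y$; continuity of $\Phi^1_A$ in $y$ (guaranteed because $A$ is a continuous linear differential system) and a continuous choice of $A_\delta$ for the saddle-conservative property handle this, but it is worth stating explicitly. Everything else is a transcription of Lemma~\ref{rot3cont}, with the exchange-of-directions isotopy replaced by the expansion isotopy furnished by saddle-conservativeness.
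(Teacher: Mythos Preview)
Your proposal is correct and follows the same overall scheme as the paper: build an isotopy $\{\mathcal{R}^t_y\}$ ending at an $\SL$-element that expands $e_y$ by $1+\delta$, set $\Psi^t(y)=\Phi_A^t(y)\mathfrak R^t_y$, differentiate to extract $H$, and control $\|H\|_p$ via the transversal measure of a thin flowbox. Two minor points of comparison. First, your conjugation step is a detour: the saddle-conservative property can be invoked directly at the direction $e_y$ to obtain $A_\delta\in\SL$ with $A_\delta e_y=(1+\delta)e_y$, so there is no need to pass through $\Phi_A^1(y)e_y$ and conjugate back (and the opening ``reduction via Remark~\ref{rot33cont}'' is vacuous, since the statement is already in the desired form). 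Second, the paper avoids your renormalization $\zeta$ entirely by choosing the isotopy to stay inside $\SL(d,\mathbb R)$ throughout: it takes a smooth family $\{\mathcal{E}^t_y\}\subset\mathcal S\cap\SL(d,\mathbb R)$ with $\mathcal{E}^t_y e_y=(1+\eta(t))e_y$, where $\eta$ rises smoothly from $0$ to $\delta$; then $H(\varphi^t(y))=\Phi_A^t(y)\dot{\mathcal E}^t_y(\mathcal E^t_y)^{-1}(\Phi_A^t(y))^{-1}$ is automatically traceless (as $\dot{\mathcal E}^t_y(\mathcal E^t_y)^{-1}\in\mathfrak{sl}$), the $\dot\zeta/\zeta$ term disappears, and the bound simplifies to $\|H\|_p\le LK^2\hat\mu(B(x,r))^{1/p}$. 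Your route, following Lemma~\ref{rot3cont} verbatim, trades this cleanliness for not having to argue that the saddle-conservative element can be reached by an $\SL$-valued path; either way the argument goes through.
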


\begin{proof}
We will perform the continuous perturbations along a time-one segments of time-one orbits of $y\in B(x,r)$ for some sufficiently thin flowbox. The construction is similar to the one we did in the proof of Lemma~\ref{rot3cont}. Take $K:=\max_{z\in X}\|(\Phi_{A}^t(z))^{\pm 1}\|$ for $t\in[0,1]$.

Let $\mathcal{S}\subseteq \GL(d,\mathbb{R})$ be the saddle-conservative Lie subgroup associated to $\mathscr{S}$, $y\in B(x,r)$ and $e_y\in \mathbb{R}^d_y$ varying continuously with $y$. Fix $\delta>0$ and let $\eta\colon \mathbb{R} \rightarrow [0,1]$ be any $C^{\infty}$ function such that $\eta(t)=0$ for $t\leq 0$ and $\eta(t)=\delta$ for $t\geq 1$. Take a smooth family $\{\mathcal{E}^{t}_y\}_{t>0}\subset \mathcal{S}$ such that:
\begin{enumerate}
\item [(i)] $\mathcal{E}^{t}_y \in \SL(d,\mathbb R)$ and
\item [(ii)] $\mathcal{E}_y^{t}\, e_y=(1+\eta(t))e_y$.
\end{enumerate}

Consider the $1$-parameter family of linear maps $\Psi^{t}(y)\colon \mathbb{R}^{d}_{y} \rightarrow \mathbb{R}^{d}_{\varphi^{t}(y)}$ where $\Psi^{t}(y)=\Phi_{A}^{t}(y) \mathcal{E}^{t}_y$. We take time derivatives and we obtain:
\begin{eqnarray*}
\dot{\Psi}^{t}(y)&=& \dot{\Phi}_{A}^{t}(y)\mathcal{E}_y^{t}+\Phi_{A}^{t}(y)\dot{\mathcal{E}}^t_y=A(\varphi^{t}(y))\Phi_{A}^{t}(y)\mathcal{E}^{t}_y+\Phi_{A}^{t}(y)\dot{\mathcal{E}}^t_y\\
&=& A(\varphi^{t}(y))\Phi_{A}^{t}(y)\mathcal{E}^{t}_y+\Phi_{A}^{t}(y)\dot{\mathcal{E}}^t_y (\mathcal{E}^{t}_y)^{-1} (\Phi_{A}^{t}(y))^{-1}\Phi_{A}^{t}(y)\mathcal{E}^{t}_y\\
&=& [A(\varphi^{t}(y))+\Phi_{A}^{t}(y)\dot{\mathcal{E}}_y^t (\mathcal{E}^{t})_y^{-1} (\Phi_{A}^{t}(y))^{-1}]\Psi^{t}(y).
\end{eqnarray*}
The perturbation is then defined by:
$$H(\varphi^t(y))=\Phi_{A}^{t}(y)\dot{\mathcal{E}}_y^t (\mathcal{E}^{t}_y)^{-1} (\Phi_{A}^{t}(y))^{-1}.$$
We can define now the continuous linear differential system $B$ as in \eqref{B}.
Now it is time to choose the thickness $r>0$.  Let $L>0$ be such that $\|\dot{\mathcal{E}}_y^{t}(\mathcal{E}_y^{t})^{-1}\|^p\leq L$, for all $y\in B(x,r)$ and $t\in[0,1]$. Finally, take $r>0$ such that:
$$\hat{\mu}(B(x,r))<\left(\frac{\epsilon}{L K^2}\right)^p.$$
To estimate $d_p(A,B)\leq \|H\|_p$, we have
\begin{eqnarray*}
\|H\|_p&=&\left(\int_\mathcal{F} \|H(z)\|^p d\mu(z)\right)^{1/p}\\
&=&\left(\int_0^1\int_{B(x,r)} \|H(\varphi^t(y))\|^p d\hat{\mu}(y)dt\right)^{1/p}\\
&=&\left(\int_0^1\int_{B(x,r)} \left\|\Phi_{A}^{t}(y)\dot{\mathcal{E}}_y^t (\mathcal{E}_y^{t})^{-1} (\Phi_{A}^{t}(y))^{-1}\right\|^p d\hat{\mu}(y)dt\right)^{1/p}\\
&\leq&L K^2\hat{\mu}(B(x,r))^{1/p}<\epsilon.
\end{eqnarray*}
Finally, we observe that
$$\Phi^{1}_{B}(y)\, e_y=\Psi^{1}(y)\,e_y=\Phi_{A}^{1}(y) \mathcal{E}^{1}_y \,e_y=\Phi_{A}^{1}(y) (1+\eta(1))\,e_y=(1+\delta)\Phi^{1}_{A}(y) e_y.$$
\end{proof}

The proof of Theorem~\ref{simple2}, which asserts the density of cocycles with simple spectrum in continuous-time cocycles, follows by similar arguments as the proof of Theorem~\ref{simple}. Since Lemma~\ref{lit change}~\cite[Lemma 4.3]{AC0} holds trivially for continuous-time cocycles, we only need the flow version of Lemma~\ref{split spec}, which we write down for completeness:

\begin{lemma}\label{split spec flow}
Assume that $A\in \mathcal{S}_{\!I\!C}$ over a flow $\varphi^t\colon X\rightarrow X$ has one-point
spectrum and $d\geq2$. Then, for any small $\epsilon >0$ and $1\leq p < \infty$, there exists $B\in\mathcal{S}_{\!I\!C}$, with
$\|A-B\|_p<\epsilon$, such that $B$ has at least two different
Lyapunov exponents.
\end{lemma}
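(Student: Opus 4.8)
The plan is to mimic the discrete-time construction of Lemma~\ref{split spec}, replacing the discrete building blocks $C_1$ and $C_2$ by their continuous-time counterparts, i.e. by perturbations arising from Lemma~\ref{rot3cont} (and Remark~\ref{rot33cont}) for the ``turning'' part and from Lemma~\ref{rot4} for the ``saddle'' part. Since $\varphi^t$ is ergodic, the one-point spectrum means the unique Lyapunov exponent $\lambda_A$ has multiplicity $d$; by the Ostrogradsky--Jacobi--Liouville formula \eqref{OJL} any traceless perturbation preserves $\int_X \log|\det\Phi^t(x)|\,d\mu$, hence preserves the sum of Lyapunov exponents counted with multiplicity, so $\sum \lambda_i m_i = d\lambda_A$ is an invariant throughout. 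Because we will only use perturbations supported on flowboxes over a set of arbitrarily small $\hat\mu$-measure, the $L^p$ cost will be controlled exactly as in the two lemmas above.

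First I would fix $M>1$ and a Borel section-type set giving a flowbox $\mathcal{F}_0$ (using Rokhlin disintegration $\mu=\hat\mu\times dt$ as in the proof of Lemma~\ref{rot3cont}) on which $\|\Phi_A^{\pm1}\|\le M$, with $\hat\mu$-measure as small as we like, and disjoint from its time-one image (a Kakutani--Rokhlin tower argument, or simply shrinking the flowbox). Fix a unit direction $e$ and define, as in Lemma~\ref{split spec}, a measurable $\Phi_A$-invariant line field $E(x)=\mathbb{R}v(x)$ obtained by transporting $e$ under the cocycle between consecutive visits to the top of the tower. Then, using Lemma~\ref{rot3cont}/Remark~\ref{rot33cont} with the measurable pair $u_y=\Phi_A^1(y)^{-1}(\text{current direction})$, $v_y=\Phi_A^1(y)^{-1}e$, I would produce $B_1=A+H_1$ with $H_1$ traceless, supported in $\mathcal{F}_0$, $\|H_1\|_p<\epsilon/2$, such that along $\varphi^t$ the line field $E(\cdot)$ becomes $\Phi_{B_1}^t$-invariant. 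If $B_1$ already has two distinct exponents we are done; otherwise, as in the discrete case, its unique exponent is forced to equal $\lambda_A$ by the determinant identity.

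Next, over a (smaller) disjoint flowbox I would apply Lemma~\ref{rot4} to insert a saddle-conservative block along $E(\cdot)$: for $\delta\in(0,1)$ this yields $B=B_1+H_2$, $H_2$ traceless, $\|H_2\|_p<\epsilon/2$, with $\Phi_B^1(y)e_y=(1+\delta)\Phi_{B_1}^1(y)e_y$ on the base of the box. By Birkhoff's ergodic theorem applied to the return cocycle, exactly as in \eqref{lyap exponent for Dflow}, the exponent of $B$ restricted to the line field $E$ becomes $\lambda_A+c\log(1+\delta)$ for some $c>0$ proportional to the measure of the box, while the determinant/trace identity \eqref{OJL} keeps $\sum\lambda_i m_i=d\lambda_A$. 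Hence $B$ must have at least two distinct Lyapunov exponents, and $\|A-B\|_p\le\|H_1\|_p+\|H_2\|_p<\epsilon$ after choosing the flowboxes thin enough; this is the desired $B$.

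The main obstacle, as in the discrete proof of Lemma~\ref{split spec}, is \textbf{setting up the invariant line field and the tower structure in a way that is compatible with the flowbox perturbations}: one must ensure that the two flowboxes are disjoint (and disjoint from the relevant time-one images), that the line field $E$ transported between visits is genuinely $\Phi_A$-invariant off the perturbation region, and that the Birkhoff average in \eqref{lyap exponent for Dflow} still computes the exponent after the perturbation has made $E$ invariant for $\Phi_B$. All $L^p$ estimates are inherited verbatim from Lemmas~\ref{rot3cont} and~\ref{rot4}, so no new analytic difficulty arises there; the only genuinely continuous-time subtlety is that the perturbations act over a time-one window rather than at a single iterate, which is precisely what Lemmas~\ref{rot3cont} and~\ref{rot4} were designed to absorb.
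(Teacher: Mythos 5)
Your proposal is correct and follows essentially the same route as the paper's proof: transport a fixed direction $e$ to get a measurable invariant line field, use the turning perturbation of Lemma~\ref{rot3cont} together with Remark~\ref{rot33cont} to make it invariant for the perturbed system, then expand along it with Lemma~\ref{rot4}, and conclude from the Ostrogradsky--Jacobi--Liouville determinant invariance and Birkhoff's theorem, with the $L^p$ cost controlled by the flowbox measure. The only (cosmetic) differences are that the paper first replaces $A$ by a continuous system via $L^p$-density so that Lemmas~\ref{rot3cont} and~\ref{rot4} apply exactly as stated, and it supports the expanding perturbation on $\varphi^1(V)$, where the line field is constantly $\mathbb{R}e$ and hence the required choice of directions is continuous, rather than on a separate flowbox along the merely measurable field $E(\cdot)$.
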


\begin{proof}
We will consider $A$ to be continuous because we can always approximate, in the $L^p$-sense, the linear differential system $A$ by another one which is continuous. Consider a transversal section to the flow $\Sigma\subset X$, such that the time-one flowbox $V:=\varphi^{[0,1]}(\Sigma)$ is such that $\mu(V)>0$,
 $V\cap \varphi^1(V)=\varphi^1(\Sigma)$. Let $L_A:=\max_{x\in X}\|A(x)\|$  and
$k(x):=\min\{ t>0: \varphi^{-t}(x)\in \varphi^1(\Sigma)\}$. For the sake of simplicity of presentation we assume that $\Sigma$ is a transversal closed ball $B(p,r)$. Fix a unitary vector $e\in
\mathbb RP^{d-1}$ and define the following \emph{vector field} which is a
normalized image under the cocycle associated to $A$ of the direction associated to the vector $e$, in the
fiber corresponding to each $x\in X$:
 $$v(x):=\left\{\begin{array}{lll}
e,&& \textrm{if}\, x\in \varphi^1(\Sigma)\\
\frac{\Phi_A^{k(x)}(\varphi^{-k(x)}(x))e}{\|\Phi_A^{k(x)}(\varphi^{-k(x)}(x))e\|},&& \textrm{otherwise}
\end{array}\right., $$ and set $E(x)=\textrm{span}\{v(x)\}$.
For $x\in \Sigma$ define $q(x)\in\mathbb RP^{d-1}=\mathbb{R}^d_x$ given
by $$q(x)=\frac{\Phi^1_A(x)v(x)}{\|\Phi^1_A(x)v(x)\|}.$$

Let $H_{q(\cdot)}\colon X\rightarrow{{\mathfrak {sl}}(d,\mathbb{R})}$ be a linear differential system, supported in $V$ and constructed following the steps of
Lemma \ref{rot3cont} and Remark~\ref{rot33cont}, such that, if  $x\in \Sigma$ and $e\notin \langle q(x)\rangle$ we have:
$$\Phi^1_{A+H_{q(x)}}(x)v(x)=\Phi^1_A(x)\mathbb{R}w(x),$$
where $w(x):=[\Phi^1_A(\varphi^1(x))]^{-1} e$.
Clearly, $d_p(A,A+H_{q})$ can be smaller than any small
$\epsilon>0$ just considering $V$ with small enough $\mu$-measure. If
$A+H_{q}$ has two or more distinct Lyapunov exponents we take $B=A+H_{q}$
and we are done.

Let us consider now that $A+H_{q}$ has only one Lyapunov exponent
$\lambda_{A+H_{q}}$. Then, it must be equal to the unique Lyapunov
exponent $\lambda_A$ for $\Phi^1_A$ (and both have multiplicity $d$).
Indeed, by Os\-tro\-grad\-sky-Jacobi-Liouville formula in (\ref{OJL}) we get
$$\det \Phi^t_A(x)=\det \Phi^t_{A+H_{q(x)}}(x)$$ for all $x\in X$, and by the
multiplicative ergodic theorem we have
$$d.\lambda_{A+H_{q}}=\int \log |\det {\Phi^1_{A+H_{q(x)}}}(x)|\,d\mu=\int \log |\det
\Phi^1_A(x)|\,d\mu=d.\lambda_A.$$

Fix $\delta\in(0,1)$. Since our algebra has the saddle-conservative property,  we let $J\colon X\rightarrow{{\mathfrak {sl}}(d,\mathbb{R})}$ be a linear differential system, supported in $\varphi^1{(V)}$ and constructed following the steps of
Lemma \ref{rot4}, such that, for $x\in \varphi^1{(\Sigma)}$, we have
$$\Phi^1_{A+J}(x)e=(1+\delta)\Phi^1_A(x)e.$$
Finally, define the continuous linear differential system, supported in $\varphi^{[0,2]}(V)$, by
$$D(x)=A(x)+H_{q(x)}(x)+J(x).$$ Since, for all $x\in X$
\begin{equation*}
\Phi^1_{D}(x)E(x)=\Phi^1_{A+H_{q(x)}}(x)E(x)=E(\varphi^1(x))
\end{equation*}
by Birkhoff's ergodic theorem we have:
\begin{align}\lambda(D,x,v(x))&:=\lim_{t\to\infty}\frac1t\log\|\Phi_{D}^t(x)v(x)\|\nonumber\\
&=\lim_{n\to\infty}\frac1n\log\|\Phi_{D}^n(x)v(x)\|\nonumber\\&=\lim_{n\to\infty}\frac1n\log\|(1+\delta)^{\sum_{j=0}^{n-1}\mathbbm
l_{V}(\varphi^j(x))}\Phi_{A+H_{q(x)}}^n(x)v(x)\|\nonumber\\
& = \lambda(A+H_{q(x)},x,v(x)) + \log(1+\delta)\mu(V)\label{lyap exponent
for D2flow}.
\end{align}
Let $\lambda_{D,1}\geq
\lambda_{D,2}\geq\ldots\geq\lambda_{D,r_\delta}$ be the distinct
Lyapunov exponents for $D$, with the corresponding multiplicities
$m_1, \ldots, m_{r_\delta}$. Since for all $x\in X$,
$$ \det \Phi_{D}^1(x)=\det \Phi^1_{A+H_{q(x)}}(x)=\det
\Phi^1_A(x)\label{dets},
$$
by the multiplicative ergodic theorem we also have
$$
\sum_{i=1}^{r_\delta}\lambda_{D,i}\cdot m_{i}=d.\lambda_A.
$$ By \eqref{lyap exponent
for D2flow}, for any $\delta>0$ the linear differential system $D$ has a Lyapunov exponent
equal to $\lambda_A+\log(1+\delta)\mu(V)$, so we must have
$r_\delta\geq2$. Moreover, for all $\delta>0$, we have
\begin{itemize}
\item $J$ is supported in $\varphi^1(V)$ and is bounded;
\item $H_q$ is supported in $V$ and is bounded and so,
\item $D(x)=A(x)$ in $x\notin V\cup \varphi^1(V)$ and $D$ is bounded.
\end{itemize}
which implies that $d_p(A,D)$ can be made as small as we want by decreasing $r>0$. We just have now to consider $B=D$.
\end{proof}

\section{Applications to discrete systems}\label{app}

\subsection{Dynamical cocycles}\label{dynamical}

We would like to present now an application to the so-called \emph{dy\-na\-mi\-cal cocycle}. In this case we consider that the base dynamics and the fiber dynamics are related. In fact, the fibered action is given by the tangent map on the tangent bundle of the action defined in the base. Of course that these systems are much more delicate than the ones studied along this paper since the perturbations in the fiber have to be obtained by the effect of a perturbation in the base. Let us present briefly the setting we are interested in. From know on we let $M$ be a closed Riemannian surface and $\mu$ the Lebesgue measure arising from the area-form in $M$. Let $\text{Hom}_{\mu}(M)$ stands for the set of homeomorphisms in $M$ which keep the Lebesgue measure invariant and $\text{Diff}^1_\mu(M)$ the set of diffeomorphisms of class $C^1$ supported on $M$. Finally, we let $\text{Hom}_{\mu}^p(M)$ denote the set of elements $f\in\text{Hom}_{\mu}(M)$ such that for $\mu$-a.e. $x\in M$ the map $f$ has well defined derivative $Df(x)$ and it is $L^p$-integrable, i.e,
$$\left(\int_M \|Df(x)\|^p\,d\mu\right)^{1/p}<\infty.$$
Moreover, we topologize $\text{Hom}_{\mu}^p(M)$ with the topology (denominated by $L^p$-topology) defined by the maximum of the $C^0$-topology (cf. ~\cite{BS}) and the one analog to the one constructed in \S\ref{topologies}. Then, we take the $L^p$-completement of $\text{Hom}_{\mu}^p(M)$ which we still denote by $\text{Hom}_{\mu}^p(M)$. By Baire's category theorem $\text{Hom}_{\mu}^p(M)$ is a Baire space.

Each map $f\in \text{Diff}^1_\mu(M)$ generates a linear (dynamical) cocycle
$F_f\colon TM\to TM$ generated by:
$${F_f}(x,v)=(f(x), Df(x)v),$$
and the same holds for $f\in \text{Hom}_{\mu}^p(M)$ at least for a full measure subset $\hat{M}\subseteq M$. Since these maps preserve the Lebesgue measure $Df(x)\in\SL(2,\mathbb{R})$.

From now on we endow $\text{Hom}_{\mu}(M)$ with the $C^0$-topology, $\text{Hom}_{\mu}^p(M)$ with the $L^p$-topology and $\text{Diff}^1_\mu(M)$ with the $C^1$-Whitney topology.

In ~\cite{BS} it was proved that $C^0$-densely elements in $\text{Hom}_{\mu}(M)$ have one-point spectrum. On the other hand, in ~\cite{B}, it was proved that $C^1$-generic elements in $\text{Diff}^1_\mu(M)$ are Anosov or else have one-point spectrum. Here, we describe what behavior occurs in the middle:

\begin{maintheorem}\label{dc}
There exists an $L^p$-residual subset $\mathcal{R}$ of $\text{Hom}_{\mu}^p(M)$,  $1\leq p < \infty$ such that, for any $f\in \mathcal{R}$ we have that $\mu$-a.e. $x\in M$ has all Lyapunov exponents equal to zero.
\end{maintheorem}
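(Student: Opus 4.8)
The plan is to reduce Theorem~\ref{dc} to Theorem~\ref{ops} applied to the dynamical cocycle $F_f(x,v)=(f(x),Df(x)v)$, which lives in $\SL(2,\mathbb{R})$ (since $\mu$ is area-preserving), keeping careful track of the fact that now we are not free to perturb the fiber cocycle independently of the base: every $L^p$-perturbation of the fiber must be realized by an $L^p$-perturbation of $f$ itself. First I would recall the general machinery of \S\ref{proof ops}: the functions $\Lambda_k$ ($k=1,2$, but here only $\Lambda_1$ matters since $\Lambda_2\equiv 0$ on $\SL(2,\mathbb{R})$) are $L^p$-upper semicontinuous, and their set of continuity points is $L^p$-residual; at a continuity point the discontinuity jump $J_1$ vanishes, forcing $\lambda_1=\lambda_2$, and since the cocycle is in $\SL(2,\mathbb{R})$ this means both Lyapunov exponents are zero $\mu$-a.e. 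Thus it suffices to check two things in the dynamical setting: (a) that $f\mapsto \Lambda_1(Df)$ is upper semicontinuous with respect to the $L^p$-topology on $\text{Hom}_\mu^p(M)$, and (b) that the perturbation Lemma~\ref{rot3} (interchange of Oseledets directions by an $L^p$-small perturbation supported on a small-measure set, as used in Propositions~\ref{P1}--\ref{P3}) admits a version where the perturbing cocycle is $Dg$ for some $g$ close to $f$ in $\text{Hom}_\mu^p(M)$.

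For (a), the upper semicontinuity carries over essentially verbatim from \S\ref{ArBo}/\cite{AB}: the entropy function is upper semicontinuous in the $L^p$-norm of the fiber cocycle, and since the $L^p$-topology on $\text{Hom}_\mu^p(M)$ is by definition the maximum of the $C^0$-topology and the $L^p$-topology on the derivative cocycle $x\mapsto Df(x)$, convergence in $\text{Hom}_\mu^p(M)$ implies $L^p$-convergence of the cocycles, so $\limsup_n \Lambda_1(Df_n)\le \Lambda_1(Df)$. For (b), the point is that $\text{Diff}^1_\mu(M)$ is $C^0$-dense (hence $L^p$-dense, after the completion) in $\text{Hom}_\mu^p(M)$, so it is enough to perform the perturbation starting from a genuine $C^1$ area-preserving diffeomorphism; then one uses a local area-preserving perturbation supported in a small topological ball around a well-chosen point $y$ (nonperiodic, which is $\mu$-a.e.\ automatic for a non-atomic measure on a surface with at most countably many periodic orbits for a fixed $f$) to realize, at the tangent level, the rotation $R_{\hat u,\hat v}\in\SL(2,\mathbb{R})$ of Lemma~\ref{rot} that sends the direction $E(y)$ into $Df(y)F(y)$. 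Because $\SL(2,\mathbb{R})$ is two-dimensional and $SO(2)$ is a maximal compact, the required $R_{u,v}$ can be taken to be a rotation, and one can insert a genuine rotational ``twist'' in a Moser-type area-preserving coordinate chart near $y$: the cost in $C^0$-norm is $O(\text{radius of the ball})$ and the cost in $L^p$-norm of the derivative perturbation is $O(\mu(\text{ball})^{1/p})$, both arbitrarily small. This is exactly the content one needs to run the argument of Propositions~\ref{P1}, \ref{P2}, \ref{P3} with $\mathcal{T}_{\!I\!C}$ replaced by the dynamical cocycles over $\text{Hom}_\mu^p(M)$, yielding $\Lambda_1(Dg)<\delta - J_1(Df) + \Lambda_1(Df)$ for $g$ arbitrarily $L^p$-close to $f$, hence $L^p$-upper semicontinuity plus density of the decay forces continuity points to have $J_1=0$.

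I would then conclude exactly as in the proof of Theorem~\ref{ops}: let $\mathcal{R}\subset\text{Hom}_\mu^p(M)$ be the set of continuity points of $f\mapsto\Lambda_1(Df)$, which is $L^p$-residual by upper semicontinuity; for $f\in\mathcal{R}$, the argument above shows $J_1(Df)=0$, i.e.\ $\lambda_1(Df,x)=\lambda_2(Df,x)$ for $\mu$-a.e.\ $x$, and since $Df(x)\in\SL(2,\mathbb{R})$ both exponents must vanish $\mu$-a.e. The main obstacle I expect is step (b): making the direction-interchange perturbation \emph{realizable by an area-preserving homeomorphism (or $C^1$ diffeomorphism) close to $f$ in the combined $C^0$--$L^p$ topology}, rather than by an arbitrary measurable cocycle. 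The resolution should be a local Moser/fragmentation-type lemma inserting a small rotation in an area-preserving chart around a nonperiodic point; the $L^p$ flavor of the topology makes this much easier than the $C^0$ case of \cite{BS}, since a large pointwise twist concentrated on a set of tiny measure is $L^p$-small. One should also note that Lemma~\ref{lit change} (the slight shift of the whole spectrum) is not needed here because in dimension two a single split of a one-point spectrum already yields simple spectrum, and for the residuality statement only the decay of $\Lambda_1$ is used.
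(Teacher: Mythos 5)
Your proposal follows essentially the same route as the paper's (sketched) proof: (i) upper semicontinuity of $f\mapsto\int_M\lambda_1(f,x)\,d\mu$ with respect to the combined $C^0$--$L^p$ topology, via the Arbieto--Bochi argument which does not need ergodicity of the base; (ii) an $L^p$-cheap, area-preserving local perturbation (an $L^p$-version of the realizable sequences of \cite{B,BS}) that mixes Oseledets directions and drops the integrated exponent near any map, hyperbolicity being no obstruction in this topology; (iii) conclusion at the residual set of continuity points of an upper semicontinuous function (your formulation via $J_1=0$ and the paper's via $\Lambda(f)=0$ coincide for $\SL(2,\mathbb{R})$ cocycles). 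The one point where you genuinely diverge, and where you should be careful, is your reduction in step (b): you assert that $\text{Diff}^1_\mu(M)$ is $C^0$-dense, \emph{hence} $L^p$-dense, in $\text{Hom}^p_\mu(M)$, and then perturb a smooth approximation. The implication is not justified: the $L^p$-topology on $\text{Hom}^p_\mu(M)$ is the maximum of the $C^0$-topology and the $L^p$-topology on the derivative cocycle, and a $C^0$-small smoothing of $f$ need not have derivative $L^p$-close to $Df$. The paper deliberately avoids this step; its remark (iv) singles out the $L^p$-density of $\text{Diff}^1_\mu(M)$ in $\text{Hom}^p_\mu(M)$ as an \emph{unproven hypothesis} that would merely simplify the argument, and its main route performs the Moser-type twist directly on the homeomorphism $f$ itself (whose derivative exists $\mu$-a.e.\ by definition of $\text{Hom}^p_\mu(M)$), in the spirit of the topological realizable sequences of \cite{BS}. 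If you drop the density claim and apply your local area-preserving rotation construction to $f$ directly, your argument matches the paper's; as it stands, that intermediate claim is the only real gap, since the rest of your scheme (upper semicontinuity, decay of $\Lambda_1$ by one cheap interchange of directions, residuality of continuity points, and the $\SL(2,\mathbb{R})$ trace argument forcing both exponents to vanish) is exactly what the paper does.
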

Let us now see the highlights of the proof of previous theorem.

\medskip

\emph{(i) On the entropy function:}

Given a set of measurable and Lebesgue invariant maps $\mathscr{T}$ endowed with a certain topology $\tau$ we consider the function that associated to each $f\in \mathscr{T}$ the integral over $M$ of its upper Lyapunov exponent with respect to the Lebesgue measure:
$$
\begin{array}{cccc}
\Lambda\colon & (\mathscr{T},\tau) & \longrightarrow & [0,\infty[ \\
& f & \longmapsto & \int_M \lambda_1(f,x)\,d\mu,
\end{array}
$$
It was proved in ~\cite[\S 4]{BS} that when $\mathscr{T}=\text{Hom}_{\mu}(M)$ and $\tau$ the $C^0$-topology, then $\Lambda$ cannot be upper semicontinuous. Moreover, in \cite[Proposition 2.1]{B} is was proved that when $\mathscr{T}=\text{Diff}_{\mu}^1(M)$ and $\tau$ the $C^1$-topology, then $\Lambda$ is upper semicontinuous. When $\mathscr{T}=\text{Hom}_{\mu}^p(M)$ and $\tau$ the $L^p$-topology, then $\Lambda$ is upper semicontinuous by using the arguments described in ~\cite{AB} which, we recall, do not require $f$ to be ergodic.

\medskip

\emph{(ii) On the perturbations:}

In ~\cite[\S3.1]{B} it was developed the concept of \emph{realizable sequences} (in the $C^1$-sense) and in ~\cite[\S 2.4]{BS} the  the concept of \emph{topological realizable sequences} (in the $C^0$-sense). Here, we need an $L^p$-version of it. Then, since we can rotate any angle we like, on the action of $Df$, by making an arbitrarily small $L^p$-perturbation the uniform hyperbolicity cannot be an obstacle in order to decay the Lyapunov exponent as it is in Bochi's setting. Therefore, we can proceed like in \cite{BS} and obtain a map with arbitrarily small Lyapunov exponent near any map (even an Anosov one). Recall the points (1), (2) and (3) in \S \ref{BVrevisited}. Once again we emphasize that the use of Bochi's strategy is crucial because Arnold and Cong's arguments assume the ergodicity of the base map and in our dynamical cocycle context the base dynamics change and may eventually be non ergodic\footnote{ We observe that, despite the fact that Oxtoby and Ulam theorem (\cite{OU}) assures that $C^0$-generic volume-preserving maps are ergodic, the set of $C^0$-stably ergodic (and also $L^p$-stably ergodic) ones is empty.}.

\medskip

\emph{(iii) End of the proof:}

We pick a point of continuity $f$ of the function $\Lambda\colon (\text{Hom}_{\mu}^p(M),L^p)\rightarrow [0,\infty[$. We claim that $\Lambda(f)=0$ otherwise, if $\Lambda(f)=\alpha>0$, then, by (ii) we consider $g\in \text{Hom}_{\mu}^p(M)$ arbitrarily $L^p$-close to $f$ and such that $\Lambda(g)=0$ which contradicts the fact that $f$ is a continuity point of $\Lambda$. Finally, we use (i), and the fact that the points of continuity of an upper semicontinuous function if a residual subset.

\medskip

\emph{(iv) A final remark:}

Other strategy which simplify considerably the previous argument needs to assume that $\text{Diff}_{\mu}^1(M)$ is $L^p$-dense in $\text{Hom}_{\mu}^p(M)$. First, we approximate by a $C^1$-diffeomorphism $f$, and then reasoning in the following way using Bochi's theorem: if $f$ has all its Lyapunov exponent equal to zero we are over arguing like we did before using (i). Otherwise, $f$ is Anosov (or in the $C^1$-boundary of it), and a small $L^p$-perturbation send us to the interior of the non-Anosov ones (Anosov is no longer open w.r.t. the $L^p$-topology).

\subsection{Infinite dimensional discrete cocycles}\label{infinite}

We denote by $\mathscr{H}$ an infinite dimensional separable Hilbert space
and by $\mathcal{C}(\mathscr{H})$ the set of linear compact operators
acting in $\mathscr{H}$ endowed with the uniform operators norm. We fix a map
 $T:X\rightarrow{X}$ as before and $\mu$ an $f$-invariant Borel regular measure that is positive on
non-empty open subsets. Given a family $(A_{x})_{x \in X}$ of
operators in $\mathcal{C}(\mathscr{H})$ and a continuous vector
bundle $\pi: X \times \mathscr{H} \rightarrow {X}$, we define the
cocycle by
$$
\begin{array}{cccc}
F_A: & X\times{\mathscr{H}} & \longrightarrow & X\times{\mathscr{H}} \\
& (x,v) & \longmapsto & (T(x),A(x) v).
\end{array}
$$
It holds $\pi \circ {F}=f\circ{\pi}$ and,
for all $x\in X$, $F_{A}(x,\cdot):\mathscr{H}_x\rightarrow{\mathscr{H}_{f(x)}}$ is a
linear operator. We let $C^0_I(X,\mathcal{C}(\mathscr{H}))$ stand for the continuous integrable cocycles evolving in $\mathcal{C}(\mathscr{H})$ and endowed with the $C^0$-topology. Let also $L^p_I(X,\mathcal{C}(\mathscr{H}))$ stand for the continuous integrable cocycles evolving in $\mathcal{C}(\mathscr{H})$ and endowed with the $L^p$-topology.

These infinite dimensional cocycles display some properties similar to the ones in finite dimension. For instance, the existence of an asymptotic spectral decomposition with asymptotic uniform rates like the ones given in Oseledets theorem also holds by an outstanding result by Ruelle (see ~\cite{Ru}). Moreover, in ~\cite{BeC} was obtained the Ma\~n\'e-Bochi-Viana dichotomy for $C^0_I(X,\mathcal{C}(\mathscr{H}))$ equipped with the $C^0$-topology. Here, we intend to get the $L^p$-version of ~\cite{BeC} for $L^p_I(X,\mathcal{C}(\mathscr{H}))$ cocycles with the $L^p$ topology. We point out that such infinite dimensional systems have been the focus of attention (cf. \cite{BeC,BeC2,LY,LY2}) not only because of its intrinsic interest but also due to its potential applications to partial differential equations (see ~\cite[\S1.3 and \S2]{LY}).

As is expected we do drop the dichotomy in ~\cite[Theorem 1.1]{BeC} and reach the one-point spectrum statement.

\begin{maintheorem}\label{BeCLp}
There exists a $L^{p}$-residual subset $\mathcal{R}$ of the set of
integrable compact cocycles
${L_{I}^{p}(X,\mathcal{C}(\mathscr{H}))}$ such that, for
$A\in{\mathcal{R}}$ and $\mu$-almost every $x\in{X}$
$$\underset{n\rightarrow{\infty}}{\text{lim}}({A(x)^{*}}^{n}A(x)^{n})^{\frac{1}{2n}}=[0],$$
where $[0]$ stands for the null operator.
\end{maintheorem}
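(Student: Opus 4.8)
The plan is to reduce Theorem~\ref{BeCLp} to the abstract machinery already developed for finite-dimensional cocycles, exactly as the argument for Theorem~\ref{ops} proceeds, but replacing the role of the Oseledets decomposition by Ruelle's infinite-dimensional multiplicative ergodic theorem~\cite{Ru}. First I would recall from~\cite{Ru,BeC} that for $A\in L^p_I(X,\mathcal{C}(\mathscr{H}))$ one has, for $\mu$-a.e.\ $x$, a decreasing sequence of Lyapunov exponents $\lambda_1(A,x)\geq\lambda_2(A,x)\geq\cdots$ (possibly tending to $-\infty$, with the convention that $\lambda_k=-\infty$ once the spectrum is exhausted), and that the compactness of $A(x)$ forces $\lambda_k(A,x)\to-\infty$. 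The statement $\lim_{n}({A(x)^{*}}^{n}A(x)^{n})^{1/(2n)}=[0]$ is equivalent to saying that \emph{all} the Lyapunov exponents at $x$ are $\leq 0$ — indeed the limit operator is the exponential of the Oseledets operator, so it is the null operator precisely when $\lambda_1(A,x)\leq 0$. Thus it suffices to produce an $L^p$-residual set on which $\lambda_1(A,x)\leq 0$ for $\mu$-a.e.\ $x$.

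Next I would set, for each $k\geq 1$,
\begin{equation*}
\Lambda_k(A):=\int_X\big(\lambda_1(A,x)+\cdots+\lambda_k(A,x)\big)^+\,d\mu,
\end{equation*}
or more conveniently work with $\Lambda_1(A)=\int_X \lambda_1^+(A,x)\,d\mu$, and argue that $\Lambda_1$ is upper semicontinuous on $L^p_I(X,\mathcal{C}(\mathscr{H}))$ with respect to the $L^p$-topology. This is the analogue of the Arbieto--Bochi result used in \S\ref{ArBo}: the upper exponent is an infimum over $n$ of the (genuinely continuous, by the same H\"older/sub\-mul\-ti\-pli\-ca\-ti\-vi\-ty estimate as in Proposition~\ref{upper sc}) functionals $A\mapsto\frac1n\int_X\log^+\|A^n(x)\|\,d\mu$, hence upper semicontinuous; compactness of the operators guarantees integrability and that the exponents cannot pile up at $0$. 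Then, mimicking Lemma~\ref{rot3}, Proposition~\ref{P1} and Proposition~\ref{P2}, I would show that for any $A$, any $\epsilon,\delta>0$, there is $B\in L^p_I(X,\mathcal{C}(\mathscr{H}))$ with $d_p(A,B)<\epsilon$ and $\Lambda_1(B)<\delta$: the point is that accessibility in $\mathbb{R}P^{d-1}$ is replaced by the ability, within $\mathcal{C}(\mathscr{H})$, to apply an arbitrarily $L^p$-small perturbation that rotates a top Oseledets direction into a lower-exponent direction on a set of small measure — this interchange decays $\Lambda_1$, and iterating (or doing it once on roughly half the orbit, as in Proposition~\ref{P1}) drives $\Lambda_1$ below $\delta$. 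Consequently $\Lambda_1\equiv 0$ at every $L^p$-continuity point of $\Lambda_1$, and by Baire the continuity points form a residual set $\mathcal R$; for $A\in\mathcal R$, $\lambda_1^+(A,x)=0$ for $\mu$-a.e.\ $x$, which is the assertion.

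The main obstacle, as I see it, is the perturbation step in infinite dimensions: in~\cite{BV2} and in Lemma~\ref{rot3} the rotation sending one Oseledets direction onto another is taken inside a fixed accessible matrix group with \emph{uniformly bounded} norm (Lemma~\ref{rot}), whereas here the fibre is $\mathscr{H}$ and the natural perturbations live in $\mathcal{C}(\mathscr{H})$; one must check that a perturbation $B(x)=U(x)A(x)$ (or $A(x)U(x)$) with $U(x)$ a rotation of two coordinates keeps $B(x)$ compact, keeps $B$ in $L^p$, and only changes $A$ on a small-measure set so that $d_p(A,B)$ is small — the first two are automatic since finite-rank perturbations of the identity are bounded and $U(x)A(x)$ is compact, and the $L^p$ smallness follows as in Lemma~\ref{rot3} by shrinking the support, so in fact this works verbatim. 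The only genuinely delicate point is ensuring the \emph{measurable} selection of the rotating operators $x\mapsto U(x)$ and the Oseledets directions, which is handled by the measurability built into Ruelle's theorem exactly as Oseledets' theorem handles it in the finite-dimensional case; once that is in place, the proof is a transcription of \S\ref{proof ops}. A secondary point worth a sentence is the absence of ergodicity of $T$ — but this is precisely why we follow the Bochi--Viana scheme of \S\ref{BVrevisited} rather than that of~\cite{AC0}, so it costs nothing here.
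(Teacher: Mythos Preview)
Your overall architecture --- upper semicontinuity of an entropy functional, an $L^p$-small rotation that swaps Oseledets directions, and the Baire argument on continuity points --- is exactly the scheme the paper indicates (it refers back to the three steps (i)--(iii) of \S\ref{dynamical} and to \cite{BeC}). The perturbation discussion is fine: multiplying $A(x)$ by a finite-rank rotation on a small-measure set keeps the cocycle compact and is $L^p$-small, and the measurable selection issues are indeed handled by Ruelle's theorem.

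There is, however, a genuine gap in the reduction step. Your claim that
\[
\lim_{n\to\infty}\big(A(x)^{*\,n}A(x)^{n}\big)^{1/(2n)}=[0]
\quad\Longleftrightarrow\quad
\lambda_1(A,x)\le 0
\]
is not correct. By Ruelle's theorem the limit operator is compact, non-negative and self-adjoint with eigenvalues exactly $e^{\lambda_1(x)}\ge e^{\lambda_2(x)}\ge\cdots$; it equals the null operator if and only if every $\lambda_i(x)=-\infty$, not merely $\lambda_1(x)\le 0$. (If $\lambda_1(x)=0$, say, the limit has $1$ in its spectrum.) Consequently, controlling only $\Lambda_1$ and obtaining $\lambda_1^{+}=0$ a.e.\ is not enough to conclude the theorem.

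The fix is precisely to run the full Bochi--Viana scheme of \S\ref{BVrevisited} rather than the two-dimensional shortcut of \S\ref{dynamical}: define $\Lambda_k(A)=\int_X\hat\lambda_k(A,x)\,d\mu$ for \emph{every} $k\ge 1$, prove upper semicontinuity of each $\Lambda_k$ (same subadditive argument, now using $\|\wedge^k A^n\|$), and use the rotation perturbation at level $k$ to get the analogue of Proposition~\ref{P3}, forcing $J_k(A)=0$ at every continuity point of $\Lambda_k$. On the residual set $\mathcal R=\bigcap_{k\ge 1}\{\text{continuity points of }\Lambda_k\}$ (a countable intersection, hence still residual) one then has $\lambda_k(A,x)=\lambda_{k+1}(A,x)$ for all $k$ and $\mu$-a.e.\ $x$. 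Since compactness of $A(x)$ forces $\lambda_k(A,x)\to-\infty$, the only constant sequence compatible with this is $\lambda_k(A,x)\equiv-\infty$, which is the desired conclusion. This is what the paper has in mind when it says the argument ``follows the three steps'' and points to \cite{BeC}; your write-up just collapsed it to $k=1$ prematurely.
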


The strategy to obtain the proof of Theorem~\ref{BeCLp} is much like to the one described \S\ref{dynamical} which follows the three steps (i), (ii) and (iii). Once again we are free to input rotations on the fiber $\mathscr{H}_{X}$ by small $L^p$-perturbation highlighting the key point for this kind of systems. It is interesting to observe that the strategy of Arnold and Cong cannot be adapted directly to this setting. Actually, their argument is based on a \emph{finite} circular permutation on the fiber directions which have already simple spectrum (see ~\cite[Theorem 4.5]{AC0}) which we can not see how to implement to the infinite dimensional context. Once again our choice of using Bochi and Viana strategy is crucial to obtain our results.
\bigskip

\textbf{Acknowledgements:} The authors were partially supported by National Funds through FCT - ``Funda\c{c}\~{a}o para a Ci\^{e}ncia e a Tecnologia'', project PEst-OE/MAT/UI0212/2011. We would like to thank Borys Alvarez-Samaniego for some suggestions given.

\vspace{0.5cm}

\begin{tiny}

\noindent
\begin{minipage}[t]{.4\linewidth}
M\'ario Bessa

Universidade da Beira Interior, 225

Rua Marqu\^es d'\'Avila e Bolama,

6201-001 Covilh\~a

Portugal.

bessa@ubi.pt

\end{minipage}
\hspace{.2\linewidth}
\begin{minipage}[t]{.4\linewidth}
Helder Vilarinho

Universidade da Beira Interior, 225

Rua Marqu\^es d'\'Avila e Bolama,

6201-001 Covilh\~a

Portugal.

helder@ubi.pt

\end{minipage}

\end{tiny}

\end{document}